
\documentclass[10pt,draftcls,onecolumn]{IEEEtran}
\usepackage{ifpdf}
\usepackage{cite}
\if CLASSINFOpdf
  \usepackage[pdftex]{graphicx}

\else

\fi

\setcounter{page}{1}

\usepackage{amsmath,amssymb,amsfonts}
\usepackage{latexsym}
\usepackage{psfrag}
\usepackage{graphicx}
\usepackage[all]{xy}
\usepackage{algorithmic}
\usepackage{subfigure}
\usepackage{indentfirst}
\usepackage{epsfig}
\usepackage{epstopdf}
\usepackage{graphics}
%
\usepackage{array}
\usepackage{stfloats}
\usepackage{enumerate}
\usepackage{xcolor}
\usepackage{flushend}
\usepackage{times}

\newtheorem{theorem}{\textbf{Theorem}}
\newtheorem{definition}{\textbf{Definition}}
\newtheorem{corollary}{\textbf{Corollary}}

\newtheorem{proposition}{\textbf{Proposition}}
\newtheorem{assumption}{\textbf{Assumption}}

\newtheorem{lemma}{\textbf{Lemma}}
\newtheorem{remark}{\textbf{Remark}}


\hyphenation{Saddle-Point Dynamics}

\newcommand{\veq}{\mathrel{\phantom{=}} }
\newcommand{\tabincell}[2]{\begin{tabular}{@{}#1@{}}#2\end{tabular}}
\DeclareMathOperator*{\argmin}{arg\,min}
\DeclareMathOperator*{\bigtimes}{\text{\large $\times$}} 

\long\def\comment#1{}

\graphicspath{{figures/}}


\begin{document}

\title{Distributed Algorithm Over Time-Varying Unbalanced Topologies for Optimization Problem Subject to Multiple Local Constraints
\thanks{This work was supported in part by the National Natural Science Foundation of China under Grant No.~61673107, the General joint fund of the equipment advance research program of Ministry of Education under Grant No.~6141A020223, and the Jiangsu Provincial Key Laboratory of Networked Collective Intelligence under Grant No.~BM2017002. \emph{(Corresponding author: Wenwu Yu.)}}
}

\author{Hongzhe Liu, Wenwu Yu, Guanghui Wen, and Wei Xing Zheng,~\IEEEmembership{Fellow, IEEE}
\thanks{H. Liu, W. Yu, and G. Wen are with the Department of Systems Science, School of Mathematics, Southeast University, Nanjing 210096, China (e-mail: 230169408@seu.edu.cn; wwyu@seu.edu.cn; wenguanghui@gmail.com).
G. Wen is also with the School of Engineering, RMIT University, Melbourne VIC 3001, Australia.}
\thanks{W. X. Zheng is with the School of Computer, Data and Mathematical Sciences, Western Sydney University, Sydney, NSW 2751, Australia (e-mail: w.zheng@westernsydney.edu.au).}
}

\maketitle

\date{}

\begin{abstract}
This paper studies the distributed optimization problem with possibly nonidentical local constraints, where its global objective function is composed of $N$ convex functions. The aim is to solve the considered optimization problem in a distributed manner over time-varying unbalanced directed topologies by using only local information and performing only local computations. Towards this end, a new distributed discrete-time algorithm is developed 
by synthesizing the row stochastic matrices sequence and column stochastic matrices sequence analysis technique.
Furthermore, for the developed distributed discrete-time algorithm, its convergence property to the optimal solution as well as its convergence rate are established under some mild assumptions. Numerical simulations are finally presented to verify the theoretical results.
\end{abstract}

\begin{IEEEkeywords}
Convex optimization, distributed algorithm, time-varying unbalanced directed topologies, nonidentical local constraints.
\end{IEEEkeywords}

\section{Introduction}

In recent decades, with the rapid development of various communication technologies, much more attention has been focused upon distributed control and optimization of multi-agent systems.
As an important topic within this context, distributed optimization has recently received an increasing research interest due to its potential applications in numerous fields, such as formation control of robot networks, source location of sensor networks, machine learning, power grid dispatching, etc.

In the past decade, many interesting results have been reported in the literature, where distributed optimization algorithms including continuous-time algorithms and discrete-time algorithms have been developed respectively for solving such optimization problems under different circumstances.
The results on various distributed continuous-time algorithms can be found in \cite{Wang2011CDC,Gharesifard2014TAC,Kia2015Aut,Liang2019Aut,Liu2015TAC,Shi2013TAC,Qiu2016Aut,Lin2012CDC,
Lin2017TAC,Yan2014NN,Yi2014CCC,Yi2015SCL,Liu2017TNNLS,Liu2013TNNLS,Yang2017TAC} and references therein.
This paper mainly focuses on designing a distributed discrete-time algorithm.

\begin{table}\label{Tab1}
\centering
\caption{Overview of Distributed Discrete-time Optimization Algorithms}
\begin{tabular}{|@{\hspace{0.3em}}c@{\hspace{0.3em}}|@{\hspace{0.3em}}c@{\hspace{0.3em}}%
                |@{\hspace{0.3em}}c@{\hspace{0.3em}}|@{\hspace{0.3em}}c@{\hspace{0.3em}}|c@{\hspace{0.3em}}|}
  \hline
  References & Constraints & \tabincell{c}{Linear\\ convergence rate} & \tabincell{c}{Unbalanced\\ topologies} & \tabincell{c}{Time-varying\\ topologies}\\
  \hline\hline
  \cite{Nedic2009TAC} & $\bigtimes$ & $\bigtimes$ & $\bigtimes$ & $\surd$ \\
  \cite{Nedic2010TAC,Yuan2016SIAM}& $\surd$ & $\bigtimes$ & $\bigtimes$ & $\surd$ \\
  \cite{Shi2017SIAM} & $\bigtimes$ & $\surd$ & $\bigtimes$ & $\surd$ \\
  \cite{Lei2016SCL,Liu2017TAC} & $\surd$ & $\bigtimes$ & $\bigtimes$ & $\bigtimes$ \\
  \cite{Nedic2015TAC} & $\bigtimes$ & $\bigtimes$ & $\surd$ & $\surd$ \\
  \cite{Nedic2017SIAM} & $\bigtimes$ & $\surd$ & $\surd$ & $\surd$ \\
  \cite{Liang2020TAC,Gu2020NA} & $\surd$ & $\bigtimes$ & $\surd$ & $\surd$ \\
  \cite{Xin2015SCL,Pu2018CDC,Xin2020TAC} & $\bigtimes$ & $\surd$ & $\surd$ & $\bigtimes$ \\
  \cite{Saadatniaki2018} & $\bigtimes$ & $\surd$ & $\surd$ & $\surd$ \\
  \cite{Mai2016ACC,Mai2019Aut} & $\surd$ & $\bigtimes$ & $\surd$ & $\bigtimes$ \\
  This work & $\surd$ & $\bigtimes$ & $\surd$ & $\surd$ \\
  \hline
\end{tabular}
\smallskip\\
\footnotesize{$\surd$ means that this feature is involved}\\
\footnotesize{\hspace*{1.2em}$\bigtimes$ means that this feature is uninvolved}
\vspace{-2ex}
\end{table}

In the context of distributed discrete-time optimization, the distributed discrete-time algorithms were given in \cite{Nedic2009TAC,Nedic2010TAC,Yuan2016SIAM, Shi2017SIAM,Lei2016SCL,Liu2017TAC}
to solve the optimization problems over balanced topologies under different scenarios.
Later, the result was extended to the case with time-varying unbalanced directed topologies in \cite{Nedic2015TAC} and \cite{Nedic2017SIAM}, where the distributed algorithms were designed for the unconstrained optimization problem with the column stochastic matrices used.
Subsequently, built on the work in \cite{Nedic2015TAC}, a distributed algorithm was developed in \cite{Liang2020TAC} over time-varying unbalanced directed topologies by integrating the dual averaging method into the push-sum mechanism, which can solve the optimization problem with a global closed convex set constraint.
Moreover, the optimization problem with coupled linear equality constraint was addressed in a similar way in \cite{Gu2020NA}.
Additionally, based on \cite{Nedic2017SIAM}, the novel push-pull algorithms with both row stochastic matrices and column stochastic matrices were proposed in \cite{Xin2015SCL,Pu2018CDC,Xin2020TAC,Saadatniaki2018} for the unconstrained optimization problem over unbalanced directed graphs which can be static or time-varying.
However, those results cannot be applied in the constrained case.
Recently, in \cite{Mai2016ACC} and \cite{Mai2019Aut}, some attempts were made at adopting row stochastic iterative matrices for dealing with the optimization problems over static unbalanced directed topologies as the row stochastic property is very standard and easy to be satisfied in the distributed setting.
In \cite{Mai2016ACC}, an optimization problem with identical compact set constraint was considered and an algorithm based on the mechanism of estimating the left eigenvector associated with the eigenvalue $1$ was proposed.
The result in \cite{Mai2016ACC} was further extended in \cite{Mai2019Aut} to a more general case where nonidentical general closed convex set constraints were involved.
To get a clear overview of the state of the art, Table~\ref{Tab1} summarizes a detailed comparison of the aforementioned results in  \cite{Nedic2009TAC,Nedic2010TAC,Nedic2017SIAM,Lei2016SCL,Liu2017TAC,Nedic2015TAC,Liang2020TAC,Gu2020NA,Xin2015SCL,Pu2018CDC,Xin2020TAC,%
Saadatniaki2018,Mai2016ACC,Mai2019Aut}.

Although the good result was achieved in \cite{Liang2020TAC}, it is worth mentioning that the operation $\argmin_{x\in X}$ was involved in the algorithm designed in \cite{Liang2020TAC}, with $X$ being the constraint set.
Thus, each iteration of the designed algorithm therein is required to solve a subproblem $\argmin_{x\in X}$ first, which could be a numerically demanding task.
Additionally, it should be noted that only the optimization problem with a global closed convex set constraint was considered in \cite{Liang2020TAC}, but real-world optimization problems necessarily involve more general constraints quite often.
Motivated by this, we aim to solve the optimization problem with more general constraints over time-varying unbalanced directed topologies in this paper, for which a desirable distributed discrete-time algorithm will be designed without involving any in-built operation $\argmin$.

Specifically, the optimization problem with $N$ nonidentical convex inequality constraints and compact set constraints is considered in this paper.
A new distributed discrete-time algorithm is proposed over time-varying unbalanced directed topologies and a rigorous analysis of its convergence to the optimal solution of the considered problem is made under some mild assumptions.
Moreover, a detailed analysis of the convergence rate is also shown. Thus, the major contribution of this paper is that a challenging problem is successfully solved by a novel distributed discrete-time algorithm.
In detail, first, although both row stochastic matrices and column stochastic matrices are involved, the designed distributed discrete-time algorithm herein is essentially different from the algorithms proposed in \cite{Xin2015SCL,Pu2018CDC,Xin2020TAC,Saadatniaki2018} since different analysis methods are adopted.
As a result, the optimization problem with multiple nonidentical constraints can be solved over time-varying unbalanced directed topologies in this paper, which, however, could not in \cite{Xin2015SCL,Pu2018CDC,Xin2020TAC,Saadatniaki2018}. Second, the strong convexity of local objective functions is not necessarily required in this paper, while this is the key to the convergence analysis made in \cite{Xin2015SCL,Pu2018CDC,Xin2020TAC,Saadatniaki2018}.
Third, unlike the results given in \cite{Mai2016ACC} and \cite{Mai2019Aut}, auxiliary variables are introduced in the present paper to estimate the gradients of the global objective function rather than the left eigenvector for canceling out the gradients error of the global objective function caused by the asymmetry of time-varying unbalanced directed topologies. Thus, the dimensions of auxiliary variables as well as the total dimensions of all generated vectors associated with the proposed algorithm are considerably lower than those in \cite{Mai2016ACC} and \cite{Mai2019Aut} when large-scale networks are involved, which is of great importance in real applications.
Fourth, compared to the algorithm designed in \cite{Liang2020TAC}, the optimization problem with much more general constraints can be resolved by the algorithm proposed in this paper, where the in-built operation $\argmin$ is not involved at all.
Finally, this paper potentially provides an efficient mechanism for designing distributed algorithms to solve the optimization problem with very general constraints over time-varying unbalanced directed topologies.

The remaining parts are organized as follows. Section~\ref{sec:prel} recalls some preliminaries, including notations, graph theory, and useful lemmas.
Section~\ref{sec:pam} shows the problem formulation, presents the algorithm development, and gives the main theorems on the convergence property and the convergence rate.
In Section~\ref{sec:PT}, the detailed proofs of the main theorems are made.
Section~\ref{sec:simu} provides some numerical simulations and Section~\ref{sec:conc} gives a summary of the work finally.
Besides, the proofs of intermediate results are furnished in the Appendix.

\section{Preliminaries}\label{sec:prel}


\subsection{Notations}\label{subsec:notations}

Let $\mathbb{R}^{n}$ denote the $n$-dimensional real-valued vectors set and $\mathbf{1}$ denote the vector with proper dimension and all its entries being $1$. Let $\mathbb{R}^{N\times N}$ represent the set of $N$-dimensional real-valued square matrices and $I_{N}$ represent the $N$-dimensional identity matrix. Furthermore, for a given matrix $M\in \mathbb{R}^{N\times N}$ and $i,j=1,2,\cdots,N$, $M_{ij}$ or $[M]_{ij}$ denotes the $ij$-th entry of matrix $M$. Particularly, for a given vector $z\in \mathbb{R}^{n}$, $z^{T}$ is the transpose of $z$, $\|z\|$ stands for the Euclidean norm of $z$, and the scalar $z^{i}$ represents the $i$-th entry of $z$. Moreover, for a given convex function $f$$:\mathbb{R}^{n}\rightarrow \mathbb{R}$, $\nabla f(x)\ (\partial f(x))$ stands for the gradient (sub-gradient) of function $f$ in $x$. Besides, for a scalar $a$, $a^{+}=\max\{a,0\}$. Additionally, for a given set $\Omega\subseteq \mathbb{R}^{n}$, $\textbf{relint}\,\Omega$ denotes the relative interior of the set $\Omega$.

\subsection{Graph Theory}\label{subsec:graph}

Let $\{\mathcal{G}(t)\}$ stand for a directed graph sequence with a common node set $\mathcal{N}=\{1,2,\cdots,N\}$ and a directed edge set sequence $\{\mathcal{E}(t)\subseteq\mathcal{N}\times\mathcal{N}$\}, where $t=0,1,2,\cdots$. Further, for a given natural number $t$, $(i,j)\in\mathcal{E}(t)$ means that there exists a directed edge from node $j$ to node $i$, that is, node $i$ can receive information from node $j$, node $j$ is an in-neighbor of node $i$ and meanwhile node $i$ is an out-neighbor of node $j$. Moreover, define $\mathcal{N}_{i}^{in}(t)$ as the set of all node $i$'s in-neighbors and $d_{i}^{+}(t)$ as its cardinality at $t$. Similarly, define $\mathcal{N}_{i}^{out}(t)$ as the collection of all node $i$'s out-neighbors and $d_{i}^{-}(t)$ as its cardinality at $t$. Particularly, we always let $i\in\mathcal{N}_{i}^{in}(t)\bigcap\mathcal{N}_{i}^{out}(t)$, that is, $(i,i)\in\mathcal{E}(t)$, for all $t\geq0$. For a given $t$, a directed path from node $i_{m}$ and node $i_{1}$ is defined as a sequence of $m$ distinct nodes $i_{1},\cdots,i_{m}$ such that $(i_{q},i_{q+1})\in\mathcal{E}(t)$ with $q=1,2,\cdots,m-1$. A static directed graph $\mathcal{G}$ is called a strongly connected graph if there exists a directed path between any two distinct nodes in graph $\mathcal{G}$. For a given natural number $t$ and a positive integer $H$, the joint graph $\bigcup\limits_{i=t}^{t+H-1}\mathcal{G}(i)$ denotes the graph with the node set $\mathcal{N}=\{1,2,\cdots,N\}$ and a directed edge set consisting of the union of the directed edge sets of graphs $\mathcal{G}(t),\mathcal{G}(t+1),\cdots,\mathcal{G}(t+H-1)$. Furthermore, the directed graph sequence $\{\mathcal{G}(t)\}$ is said to be uniformly jointly strongly connected if for all $t\geq0$, there is a positive integer $H$ such that the joint graph $\bigcup\limits_{i=t}^{t+H-1}\mathcal{G}(i)$
is strongly connected.


%

%

\subsection{Useful Lemmas}\label{subsec:lemma}

In this subsection, some auxiliary results are shown for facilitating analysis of the main results.

\begin{lemma}[\hspace{-0.01em}\cite{Nedic2010TAC,Mai2019Aut}]\label{lemma1}
Suppose that $Y\subseteq \mathbb{R}^{n}$ is a closed convex set and $P_{Y}$ is the projection operator on $Y$, i.e., $P_{Y}(u)=\argmin_{v\in Y}\|u-v\|$. Then, for all $x_{1}\ ,x_{2}\in \mathbb{R}^{n}$, and $x_{3}\in Y$, there hold
\begin{enumerate}[(a)]
\item $\|P_{Y}(x_{1})-P_{Y}(x_{2})\|\leq\|x_{1}-x_{2}\|$;
\item $\|P_{Y}(x_{1})-x_{3}\|^{2}\leq\|x_{1}-x_{3}\|^{2}-\|P_{Y}(x_{1})-x_{1}\|^{2}$.
\end{enumerate}
\end{lemma}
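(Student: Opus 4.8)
The plan is to derive both inequalities from the first-order optimality (variational) characterization of the Euclidean projection onto a closed convex set. First I would establish the following claim: for any $u\in\mathbb{R}^{n}$, writing $p=P_{Y}(u)$, one has $\langle u-p,\,y-p\rangle\le 0$ for every $y\in Y$. To see this, fix $y\in Y$; since $Y$ is convex, $p+t(y-p)\in Y$ for all $t\in[0,1]$, and by the definition of $p$ as the minimizer of $v\mapsto\|u-v\|$ over $Y$ we get $\|u-p\|^{2}\le\|u-p-t(y-p)\|^{2}=\|u-p\|^{2}-2t\langle u-p,\,y-p\rangle+t^{2}\|y-p\|^{2}$. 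Dividing by $t>0$ and letting $t\to 0^{+}$ yields the claimed inequality. (Existence and uniqueness of $p$ for closed convex $Y$ is standard, coming from strict convexity of the squared norm together with closedness, so $P_{Y}$ is well defined.)

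For part (a), I would apply the claim twice: with $u=x_{1}$, $p_{1}=P_{Y}(x_{1})$ and test point $y=p_{2}:=P_{Y}(x_{2})\in Y$, and with $u=x_{2}$, $p_{2}$ and test point $y=p_{1}\in Y$. This gives $\langle x_{1}-p_{1},\,p_{2}-p_{1}\rangle\le 0$ and $\langle x_{2}-p_{2},\,p_{1}-p_{2}\rangle\le 0$. Adding these and rearranging produces $\|p_{1}-p_{2}\|^{2}\le\langle x_{1}-x_{2},\,p_{1}-p_{2}\rangle$, and the Cauchy--Schwarz inequality then bounds the right-hand side by $\|x_{1}-x_{2}\|\,\|p_{1}-p_{2}\|$; dividing by $\|p_{1}-p_{2}\|$ finishes the proof, the case $p_{1}=p_{2}$ being trivial.

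For part (b), I would use the claim with $u=x_{1}$, $p=P_{Y}(x_{1})$ and the test point $y=x_{3}$, which is legitimate precisely because $x_{3}\in Y$; this gives $\langle x_{1}-p,\,x_{3}-p\rangle\le 0$, equivalently $\langle x_{1}-p,\,p-x_{3}\rangle\ge 0$. Then I would expand $\|x_{1}-x_{3}\|^{2}=\|(x_{1}-p)+(p-x_{3})\|^{2}=\|x_{1}-p\|^{2}+2\langle x_{1}-p,\,p-x_{3}\rangle+\|p-x_{3}\|^{2}$ and drop the nonnegative cross term, obtaining $\|x_{1}-x_{3}\|^{2}\ge\|x_{1}-p\|^{2}+\|p-x_{3}\|^{2}$, which is exactly the desired inequality once $\|x_{1}-p\|^{2}=\|P_{Y}(x_{1})-x_{1}\|^{2}$ is moved to the other side.

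There is no serious obstacle here: the only points requiring a little care are the passage to the limit $t\to 0^{+}$ when establishing the variational inequality and the degenerate case $p_{1}=p_{2}$ in part (a), both of which are routine. Everything else is elementary manipulation of inner products and norms.
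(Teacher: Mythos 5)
Your proof is correct. The paper does not prove Lemma~\ref{lemma1} itself but simply cites it from \cite{Nedic2010TAC} and \cite{Mai2019Aut}, and your argument --- deriving the variational inequality $\langle u-P_{Y}(u),\,y-P_{Y}(u)\rangle\leq0$ for $y\in Y$ and then obtaining (a) by applying it twice with Cauchy--Schwarz and (b) by expanding $\|x_{1}-x_{3}\|^{2}$ and dropping the nonnegative cross term --- is exactly the standard proof used in those references, so it matches the intended justification.
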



\begin{lemma}[\hspace{-0.02cm}\cite{Nedic2015TAC}]\label{lemma2}
\begin{enumerate}[(a)]
\item For a scalar sequence $\{\theta(k)\}$, suppose that $\lim\limits_{k\to\infty}\theta(k)=\theta$ and $0<\rho<1$ hold. Then $\lim\limits_{k\to\infty}\sum\limits_{l=0}^{k}\rho^{k-l}\theta(l)=\frac{\theta}{1-\rho}$.
\item For a strictly positive scalar sequence $\{\theta(k)\}$, assume that $\sum\limits_{k=0}^{\infty}\theta(k)<\infty$ and $0<\rho<1$. Then $\sum\limits_{k=0}^{\infty}\bigg(\sum\limits_{l=0}^{k}\rho^{k-l}\theta(l)\bigg)<\infty$.
\end{enumerate}
\end{lemma}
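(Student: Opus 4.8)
The plan is to handle the two parts separately; both reduce to elementary estimates on the summable geometric kernel $\rho^{k-l}$.

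For part (a), I would set $S_k:=\sum_{l=0}^{k}\rho^{k-l}\theta(l)$ and compare it with the exact geometric sum $\sum_{l=0}^{k}\rho^{k-l}=\sum_{j=0}^{k}\rho^{j}=\frac{1-\rho^{k+1}}{1-\rho}$, which tends to $\frac{1}{1-\rho}$ because $0<\rho<1$. Hence it suffices to show that the \emph{error} $E_k:=S_k-\theta\sum_{l=0}^{k}\rho^{k-l}=\sum_{l=0}^{k}\rho^{k-l}\big(\theta(l)-\theta\big)$ converges to $0$. Fix $\varepsilon>0$; since $\theta(l)\to\theta$, pick $L$ with $|\theta(l)-\theta|<\varepsilon$ for all $l\ge L$. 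For $k>L$, split $E_k$ into a head $\sum_{l=0}^{L-1}$ and a tail $\sum_{l=L}^{k}$. The head is bounded by $\rho^{k-L+1}\sum_{l=0}^{L-1}|\theta(l)-\theta|$, i.e.\ a fixed constant times $\rho^{k-L+1}$, which goes to $0$ as $k\to\infty$ with $L$ held fixed; the tail is bounded by $\varepsilon\sum_{l=L}^{k}\rho^{k-l}\le\varepsilon/(1-\rho)$. Therefore $\limsup_{k\to\infty}|E_k|\le\varepsilon/(1-\rho)$, and letting $\varepsilon\downarrow0$ gives $E_k\to0$, hence $S_k\to\theta/(1-\rho)$.

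For part (b), since all terms are strictly positive I would simply interchange the order of summation in the partial sums: for every $K\ge0$,
\[
\sum_{k=0}^{K}\Big(\sum_{l=0}^{k}\rho^{k-l}\theta(l)\Big)=\sum_{l=0}^{K}\theta(l)\sum_{k=l}^{K}\rho^{k-l}\le\sum_{l=0}^{K}\theta(l)\sum_{j=0}^{\infty}\rho^{j}=\frac{1}{1-\rho}\sum_{l=0}^{K}\theta(l)\le\frac{1}{1-\rho}\sum_{l=0}^{\infty}\theta(l)<\infty .
\]
Thus the partial sums of the nonnegative series $\sum_{k}\big(\sum_{l=0}^{k}\rho^{k-l}\theta(l)\big)$ are nondecreasing and uniformly bounded, so the series converges (equivalently one invokes Tonelli/monotone convergence to pass $K\to\infty$ directly, obtaining the bound $\frac{1}{1-\rho}\sum_{l=0}^{\infty}\theta(l)$).

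There is no deep obstacle here: both claims are standard properties of a discrete convolution against a summable geometric sequence. The only point that needs care is the double-limit bookkeeping in part (a) --- one must introduce $\varepsilon$ first, then choose $L$, and only then let $k\to\infty$ with $L$ frozen, so that the head term truly vanishes; this ``$\varepsilon$ before $L$ before $k$'' ordering, familiar from Toeplitz/Ces\`aro-type limit theorems, is essentially the entire content of the argument.
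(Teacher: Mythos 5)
Your proof is correct: the $\varepsilon$--$L$--$k$ splitting for part (a) and the Tonelli-type interchange of summation for part (b) are exactly the standard arguments, and the paper itself offers no proof here --- it simply cites the result from \cite{Nedic2015TAC}, where essentially this same reasoning is used. Nothing further is needed.
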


\begin{lemma}[\hspace{-0.02cm}\cite{Nedic2015TAC}]\label{lemma3}
For nonnegative scalar sequences $\{a(k)\}$, $\{b(k)\}$, $\{c(k)\}$ and $\{d(k)\}$ with $\sum\limits_{k=0}^{\infty}b(k)<\infty$ and $\sum\limits_{k=0}^{\infty}c(k)<\infty$, let the following condition hold:
\begin{equation*}
a(k+1)\leq(1+b(k))a(k)-d(k)+c(k),\ \forall k\geq0.
\end{equation*}
Then, the scalar sequence $\{a(k)\}$ converges to some $a\geq0$ and $\sum\limits_{k=0}^{\infty}d(k)<\infty$.
\end{lemma}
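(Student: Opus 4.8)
The plan is to reduce the perturbed inequality to a genuinely nonincreasing sequence by factoring out the multiplicative perturbation $1+b(k)$ and absorbing the additive perturbation $c(k)$ into a convergent tail sum; this is the standard route for Robbins--Siegmund type lemmas.

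First I would record that $\sum_{k=0}^{\infty}b(k)<\infty$ forces the infinite product $P_{\infty}:=\prod_{j=0}^{\infty}(1+b(j))$ to converge to a finite value with $1\le P_{\infty}<\infty$, which follows from $\log(1+b(j))\le b(j)$. Set $P_{k}:=\prod_{j=0}^{k-1}(1+b(j))$ with $P_{0}=1$, so that $1\le P_{k}\le P_{\infty}$, the sequence $\{P_{k}\}$ is nondecreasing, and $P_{k+1}=(1+b(k))P_{k}$. Dividing the hypothesis $a(k+1)\le(1+b(k))a(k)-d(k)+c(k)$ by $P_{k+1}$ and introducing the rescaled nonnegative sequences $\hat a(k):=a(k)/P_{k}$, $\hat d(k):=d(k)/P_{k+1}$, $\hat c(k):=c(k)/P_{k+1}$, we obtain the cleaner recursion $\hat a(k+1)\le\hat a(k)-\hat d(k)+\hat c(k)$; moreover, since $P_{k+1}\ge1$, we still have $\sum_{k}\hat c(k)\le\sum_{k}c(k)<\infty$.

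Next I would define $u(k):=\hat a(k)+\sum_{j=k}^{\infty}\hat c(j)$, which is finite by summability, and check in one line that $u(k+1)\le\hat a(k)-\hat d(k)+\hat c(k)+\sum_{j=k+1}^{\infty}\hat c(j)=u(k)-\hat d(k)\le u(k)$. Hence $\{u(k)\}$ is nonincreasing and bounded below by $0$, so it converges to some $u\ge0$. Because the tail $\sum_{j=k}^{\infty}\hat c(j)\to0$, it follows that $\hat a(k)\to u$, and therefore $a(k)=P_{k}\hat a(k)\to P_{\infty}u=:a\ge0$. Finally, telescoping $u(k+1)\le u(k)-\hat d(k)$ from $0$ to $K$ gives $\sum_{k=0}^{K}\hat d(k)\le u(0)$, whence $\sum_{k}\hat d(k)<\infty$; since $\hat d(k)\ge d(k)/P_{\infty}$, we conclude $\sum_{k}d(k)\le P_{\infty}\sum_{k}\hat d(k)<\infty$.

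The only mildly delicate point is the convergence (and uniform boundedness) of the products $P_{k}$; everything after that is elementary bookkeeping. I expect no real obstacle beyond keeping track that all the rescaled sequences stay nonnegative and that the comparison bounds $1\le P_{k}\le P_{\infty}$ are applied in the correct direction at each step.
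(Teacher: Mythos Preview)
Your argument is correct and is precisely the standard deterministic Robbins--Siegmund reduction: rescale by the partial products $P_k=\prod_{j<k}(1+b(j))$ to kill the multiplicative perturbation, then absorb the additive perturbation into a tail sum to obtain a genuinely nonincreasing, nonnegative sequence. All the inequalities you use (in particular $1\le P_k\le P_\infty<\infty$ and $d(k)\le P_\infty\hat d(k)$) are applied in the right direction.

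Note, however, that the paper does not give its own proof of this lemma at all: it is quoted verbatim from \cite{Nedic2015TAC} and used as a black box in the proof of Theorem~\ref{theorem1}. So there is nothing to compare against here; your proof simply fills in what the paper outsources to the literature.
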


In the next Lemmas~\ref{lemma4} and~\ref{lemma5}, we reveal some important properties of the row stochastic matrices sequence and the column stochastic matrices sequence associated with a sequence of uniformly jointly strongly connected directed graphs, which are the key points for the subsequent analysis of consensus and our improved push-pull mechanism.

\begin{lemma}[\hspace{-0.02cm}\cite{Li2019TAC}]\label{lemma4}
For a uniformly jointly strongly connected directed graphs sequence $\{\mathcal{G}(k)\}$ and $k\geq s\geq0$, let $A(k:s)=A(k)\cdots A(s)$ when $k>s$, and $A(k:s)=A(k)$ when $k=s$, with $\{A(k)\}$ being a sequence of nonnegative row stochastic matrices associated with $\{\mathcal{G}(k)\}$. Also, assume that there exists a positive constant $a_{0}$ such that $A_{ii}(k)\geq a_{0}$ hold for all $i=1,2,\cdots,N$ and $k\geq0$. Then, $\forall k\geq0$, there exists a positive vector sequence $\{\pi(k)\}$ satisfying $\pi^{T}(k)\mathbf{1}=1$ and the following conditions:
\begin{enumerate}[(a)]
  \item For all $i,j\in\mathcal{N}$ and $k\geq s$, there are two constants $C_{1}>0$ and $0<\lambda_{1}<1$ such that $|a_{ij}(k:s)-\pi^{j}(s)|\leq C_{1}\lambda_{1}^{k-s}$;
  \item For all $i\in\mathcal{N}$ and $k\geq0$, there is a strictly positive constant $\theta_{1}$ such that $\pi^{i}(k)\geq\theta_{1}$;
  \item For all $k\geq0$, $\pi^{T}(k)=\pi^{T}(k+1)A(k)$.
\end{enumerate}
\end{lemma}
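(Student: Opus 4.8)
The plan is to prove Lemma~\ref{lemma4} by reducing the time-varying row-stochastic product $A(k:s)$ to a classical ergodicity/weak-contraction argument for backward products of stochastic matrices. First I would observe that under uniform joint strong connectivity together with the uniform positive diagonal bound $A_{ii}(k)\geq a_{0}$, every edge that appears in the joint graph over a window of length $H$ corresponds to an entry of the $H$-step product $A((t+H-1):t)$ that is bounded below by $a_{0}^{H-1}$ times the corresponding one-step weight; combined with strong connectivity of the joint graph this makes the $H$-step product a \emph{scrambling} matrix (every pair of rows has a common column with both entries at least some $\gamma>0$, where $\gamma$ depends only on $N$, $H$, and $a_{0}$). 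The standard consequence is that the Hajnal/Dobrushin coefficient of ergodicity satisfies $\tau(A((t+H-1):t))\leq 1-\gamma$ uniformly in $t$, and since coefficients of ergodicity are submultiplicative, $\tau(A(k:s))\leq (1-\gamma)^{\lfloor (k-s)/H\rfloor}$. This already gives geometric contraction of the rows of $A(k:s)$ toward a common row vector, which is the heart of part~(a).

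Next I would define $\pi(s)$ as the limit, as $k\to\infty$, of (any, hence every) row of $A(k:s)$; the Cauchy property of this sequence follows from the contraction estimate just established, so the limit exists, is a probability vector ($\pi^{T}(s)\mathbf 1=1$ because each $A(k:s)$ is row-stochastic), and $|a_{ij}(k:s)-\pi^{j}(s)|$ inherits the bound $C_{1}\lambda_{1}^{k-s}$ with $C_{1}$ absorbing the $H$ and the one-step rounding and $\lambda_{1}=(1-\gamma)^{1/H}$. For part~(c), the semigroup identity $A(k:s)=A(k)\,A(k-1:s)$ passed to the limit $k\to\infty$ is not quite what is wanted; instead I would use the \emph{other} factorization $A(k:s)=A(k:s+1)\,A(s)$, take $k\to\infty$ to get $\mathbf 1\,\pi^{T}(s)=\mathbf 1\,\pi^{T}(s+1)A(s)$, i.e.\ $\pi^{T}(s)=\pi^{T}(s+1)A(s)$, which is exactly~(c) after shifting the index. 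Part~(b), the uniform lower bound $\pi^{i}(k)\geq\theta_{1}>0$, then follows from~(c) by backward induction over a window: iterating $\pi^{T}(k)=\pi^{T}(k+H)A(k+H-1)\cdots A(k)$ and using that this $H$-step product has all entries in any "reachable" column bounded below, together with $\pi^{T}(k+H)\mathbf 1=1$, forces each $\pi^{i}(k)\geq \theta_{1}:= \gamma'$ for a constant $\gamma'$ depending only on $N,H,a_{0}$; alternatively one notes directly that the limiting row vector of a product whose entries are all $\geq a_{0}^{(N-1)H}$ in a spanning pattern must have all components bounded away from $0$.

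The main obstacle I expect is making the scrambling/connectivity bookkeeping fully uniform in $t$: one must show that over \emph{any} window $[t,t+H-1]$ — not just windows aligned to multiples of $H$ — the product is scrambling with the \emph{same} constant $\gamma$, which requires being a little careful that uniform joint strong connectivity gives a spanning set of paths in every window and that the self-loops $A_{ii}(k)\geq a_{0}$ let one "hold" mass along a node while waiting for the relevant edges to appear. Once that uniform scrambling constant is in hand, the rest is the textbook chain: submultiplicativity of the ergodicity coefficient $\Rightarrow$ geometric decay $\Rightarrow$ existence of $\pi(\cdot)$ as a limit with the stated rate $\Rightarrow$ the invariance relation~(c) by passing the factorization to the limit $\Rightarrow$ the uniform positivity~(b). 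Since the lemma is quoted from \cite{Li2019TAC}, I would in practice cite that reference for the detailed constants and only sketch the scrambling argument, but the self-contained route above is the one I would follow if a full proof were required.
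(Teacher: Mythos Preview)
The paper does not prove Lemma~\ref{lemma4} at all: it is stated as a quoted result from \cite{Li2019TAC} and used as a black box, so there is no ``paper's own proof'' to compare against. Your sketch is the standard and correct route to such a result (uniform scrambling over $H$-step windows via the positive diagonals, submultiplicativity of the Dobrushin/Hajnal coefficient, existence of the limiting row vector $\pi(s)$, the invariance relation from the factorization $A(k:s)=A(k:s+1)A(s)$, and uniform positivity from iterating~(c) over a window), and you already anticipate that in practice one would simply cite \cite{Li2019TAC}; that is exactly what the paper does.
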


\begin{lemma}[\hspace{-0.02cm}\cite{Nedic2015TAC}]\label{lemma5}
For a uniformly jointly strongly connected directed graphs sequence $\{\mathcal{G}(k)\}$ and $k\geq s\geq0$, let $B(k:s)=B(k)\cdots B(s)$ when $k>s$, and $B(k:s)=B(k)$ when $k=s$, with $\{B(k)\}$ being a sequence of nonnegative column stochastic matrices associated with $\{\mathcal{G}(k)\}$. Furthermore, assume that there exists a positive constant $b_{0}$ such that $B_{ii}(k)\geq b_{0}$ hold for all $i=1,2,\cdots,N$ and $k\geq0$. Then, $\forall k\geq0$, there exists a positive vector sequence $\{\mu(k)\}$ satisfying $\mu^{T}(k)\mathbf{1}=1$ and the following conditions:
\begin{enumerate}[(a)]
  \item For all $i,j\in\mathcal{N}$ and $k\geq s$, there are two constants $C_{2}>0$ and $0<\lambda_{2}<1$ such that $|b_{ij}(k:s)-\mu^{i}(k)|\leq C_{2}\lambda_{2}^{k-s}$;
  \item For all $i\in\mathcal{N}$ and $k\geq0$, there is a strictly positive constant $\theta_{2}$ such that $\mu^{i}(k)\geq\theta_{2}$.
\end{enumerate}
\end{lemma}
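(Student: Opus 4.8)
The plan is to prove the \emph{weak ergodicity} of the backward products $B(k:s)$: as $k-s\to\infty$, all $N$ columns of $B(k:s)$ become nearly identical, and the common limiting profile---which will be seen to depend only on the factors near $k$---serves as $\mu(k)$. I use repeatedly that a product of column stochastic matrices is column stochastic, so every $B(k:s)$ is column stochastic and $\mathbf 1^{T}B(k:s)=\mathbf 1^{T}$. (Alternatively, one may note that each $B^{T}(k)$ is row stochastic and attached to the reversed graph sequence $\{\mathcal G^{T}(k)\}$, which is again uniformly jointly strongly connected with diagonals $\ge b_{0}$, and transfer Lemma~\ref{lemma4} along expanding horizons; but the direct argument below is cleaner and produces $\mu(k)$ with no horizon dependence.)

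\textbf{Step 1 (long products are scrambling).} First I would show that there are an integer $T_{0}$ of order $NH$ and a constant $\eta\in(0,1/N)$ with $[B(k:s)]_{ij}\ge\eta$ for all $i,j\in\mathcal N$ whenever $k-s\ge T_{0}$. This is the combinatorial heart of the argument: uniform joint strong connectivity together with the standing self-loops guarantees that within $T_{0}$ consecutive instants there is, for each ordered pair $(j,i)$, a time-respecting walk from $j$ to $i$; padding it with self-loops to length exactly $k-s$ and invoking $B_{ii}(t)\ge b_{0}$ and the uniform lower bound on the positive entries of the $B(t)$ gives a uniform positive lower bound $\eta$ on $[B(k:s)]_{ij}$.

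\textbf{Step 2 ($\ell_{1}$-contraction).} Any matrix $P$ with $P_{ij}\ge\eta$ for all $i,j$ can be written $P=\eta\mathbf 1\mathbf 1^{T}+P'$ with $P'\ge0$ and $\mathbf 1^{T}P'=(1-N\eta)\mathbf 1^{T}$; hence if $v$ is zero-sum then $Pv=P'v$ is again zero-sum with $\|Pv\|_{1}\le(1-N\eta)\|v\|_{1}$. Splitting $B(k:s+1)=B(k)\cdots B(s+1)$ into $p:=\lfloor(k-s)/T_{0}\rfloor$ consecutive blocks of length $\ge T_{0}$---each, by Step~1, column stochastic with all entries $\ge\eta$---yields, for any probability vectors $w_{1},w_{2}$,
\[
\|B(k:s+1)(w_{1}-w_{2})\|_{1}\le 2(1-N\eta)^{p}.
\]

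\textbf{Step 3 (definition of $\mu$ and conclusion).} Set $\mu(k):=\tfrac1N B(k:0)\mathbf 1$, the average of the columns of $B(k:0)$. Column stochasticity gives $\mu(k)\ge0$ and $\mu^{T}(k)\mathbf 1=\tfrac1N\sum_{i,j}[B(k:0)]_{ij}=1$. Since $[B(k:s)]_{\cdot j}=B(k:s+1)[B(s)]_{\cdot j}$ and $[B(k:0)]_{\cdot\ell}=B(k:s+1)[B(s:0)]_{\cdot\ell}$ with $[B(s)]_{\cdot j}$ and $[B(s:0)]_{\cdot\ell}$ probability vectors, Step~2 gives $|[B(k:s)]_{ij}-[B(k:0)]_{i\ell}|\le 2(1-N\eta)^{p}$ for every $\ell$; averaging over $\ell$ and using $p\ge(k-s)/T_{0}-1$ yields part~(a) with $C_{2}=2/(1-N\eta)$ and $\lambda_{2}=(1-N\eta)^{1/T_{0}}\in(0,1)$. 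For part~(b): if $k\ge T_{0}$ then $[B(k:0)]_{ij}\ge\eta$ for all $i,j$, so $\mu^{i}(k)\ge\eta$; for the finitely many $k<T_{0}$, $\mu^{i}(k)\ge\tfrac1N[B(k:0)]_{ii}\ge\tfrac1N b_{0}^{\,k+1}\ge\tfrac1N b_{0}^{\,T_{0}}$, so $\theta_{2}:=\min\{\eta,\tfrac1N b_{0}^{\,T_{0}}\}>0$. The main obstacle is Step~1---converting ``uniformly jointly strongly connected $+$ self-loops'' into a \emph{uniform} entrywise lower bound for products over windows of fixed length; once that scrambling estimate is available, Steps~2--3 are routine and the constants $C_{2},\lambda_{2},\theta_{2}$ come out explicit.
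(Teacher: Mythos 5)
Your argument is correct, and it is worth noting that the paper itself offers no proof of Lemma~\ref{lemma5}: the statement is imported verbatim from \cite{Nedic2015TAC}. Your route---define $\mu(k)=\tfrac1N B(k:0)\mathbf 1$, establish a scrambling bound $[B(k:s)]_{ij}\ge\eta$ over windows of length $T_0=\mathcal O(NH)$, and convert it into an $\ell_1$-contraction on zero-sum vectors---is exactly the classical weak-ergodicity machinery that underlies the cited result, so in substance you are reconstructing the reference's argument rather than replacing it; the payoff of writing it out is that the constants $C_2=2/(1-N\eta)$, $\lambda_2=(1-N\eta)^{1/T_0}$, $\theta_2=\min\{\eta,\tfrac1N b_0^{T_0}\}$ become explicit, and the small-$(k-s)$ and $k=s$ cases are absorbed automatically since $C_2\ge 2$ dominates any difference of entries in $[0,1]$. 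Two caveats. First, Step~1 genuinely requires a uniform positive lower bound on \emph{all} nonzero entries of the $B(t)$, not only on the diagonals $B_{ii}(t)\ge b_0$ that the lemma lists: with off-diagonal weights allowed to decay (say summably) the columns of $B(k:s)$ need not merge and the conclusion fails. This hypothesis is implicit in the lemma's provenance and is satisfied here because the paper's weights are $B_{ij}(t)=1/d_j^-(t)\ge 1/N$, but you should state it. Second, Step~1 itself is asserted rather than proved; the mechanism you name (self-loops keep the set of nodes reachable by time-respecting walks nondecreasing, and strong connectivity of each $H$-window union forces it to grow by at least one node per window, so $T_0=(N-1)H$ suffices with $\eta$ a power of the minimal positive weight) is the standard and correct one, so this is a matter of writing out a routine induction, not a gap in the idea. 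Minor indexing slack (whether a block of ``length $T_0$'' means $T_0$ factors or $T_0+1$) only perturbs the constants.
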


Moreover, in the following Lemma~\ref{lemma6}, we provide a key result for finding the point $x$ satisfying $g(x)\leq0$, with $g(x)$ being a convex function.

\begin{lemma}[\hspace{-0.02cm}\cite{Polyak1969}]\label{lemma6}
Let $Y\subseteq \mathbb{R}^{n}$ be an arbitrary closed convex set, $g:\mathbb{R}^{n}\rightarrow \mathbb{R}$ be a convex function, and $x\in \mathbb{R}^{n}$ be given by
\begin{equation}\label{lemma6_eq1}
x=P_{Y}\bigg(v-\beta_{0}\frac{g^{+}(v)}{\|d\|^{2}}d\bigg),
\end{equation}
where $v\in \mathbb{R}^{n}$, $0<\beta_{0}<2$ is a constant, and $d\in\partial g^{+}(v)$ when $g^{+}(v)>0$ and $d=d_{0}\neq0$ otherwise, with $d_{0}$ being an arbitrary nonzero constant vector. Then, for all $z\in Y$ satisfying $g^{+}(z)=0$, there holds
\begin{equation}\label{lemma6_eq2}
\|x-z\|^{2}\leq\|v-z\|^{2}-\beta_{0}(2-\beta_{0})\frac{(g^{+}(v))^{2}}{\|d\|^{2}}.
\end{equation}
\end{lemma}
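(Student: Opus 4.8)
The plan is to split along the two branches in the definition of $d$ and, in the nontrivial branch, to combine the firm non-expansiveness of the projection (Lemma~\ref{lemma1}(b)) with the subgradient inequality applied to the convex function $g^{+}$. Note first that $g^{+}=\max\{g,0\}$ is a finite convex function on $\mathbb{R}^{n}$, so $\partial g^{+}(v)\neq\emptyset$ and the update \eqref{lemma6_eq1} is meaningful; moreover, if $g^{+}(v)>0$ and some $z\in Y$ with $g^{+}(z)=0$ exists (otherwise \eqref{lemma6_eq2} is vacuous), the subgradient inequality $0=g^{+}(z)\ge g^{+}(v)+d^{T}(z-v)$ rules out $d=0$, so the division by $\|d\|^{2}$ is legitimate.

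If $g^{+}(v)=0$, then $x=P_{Y}(v)$, and since $z\in Y$, Lemma~\ref{lemma1}(b) with $x_{1}=v$, $x_{3}=z$ gives $\|x-z\|^{2}\le\|v-z\|^{2}-\|P_{Y}(v)-v\|^{2}\le\|v-z\|^{2}$, which is exactly \eqref{lemma6_eq2} because the subtracted term there vanishes. Now suppose $g^{+}(v)>0$ and set $w=v-\beta_{0}\dfrac{g^{+}(v)}{\|d\|^{2}}d$, so that $x=P_{Y}(w)$. Applying Lemma~\ref{lemma1}(b) with $x_{1}=w$, $x_{3}=z$ yields $\|x-z\|^{2}\le\|w-z\|^{2}$, and expanding the square gives
\begin{equation*}
\|w-z\|^{2}=\|v-z\|^{2}-2\beta_{0}\frac{g^{+}(v)}{\|d\|^{2}}\,d^{T}(v-z)+\beta_{0}^{2}\frac{(g^{+}(v))^{2}}{\|d\|^{2}}.
\end{equation*}
The key step is to lower-bound $d^{T}(v-z)$: since $d\in\partial g^{+}(v)$, the subgradient inequality gives $g^{+}(z)\ge g^{+}(v)+d^{T}(z-v)$, hence $d^{T}(v-z)\ge g^{+}(v)-g^{+}(z)=g^{+}(v)$ using $g^{+}(z)=0$. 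Substituting and collecting terms, with $2\beta_{0}-\beta_{0}^{2}=\beta_{0}(2-\beta_{0})$, produces \eqref{lemma6_eq2}.

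The argument is essentially routine; the only points requiring care are that the subgradient must be taken for $g^{+}$ rather than $g$ (so that the chain $d^{T}(v-z)\ge g^{+}(v)-g^{+}(z)\ge g^{+}(v)$ is valid using nonnegativity of $g^{+}$), and that the hypothesis $0<\beta_{0}<2$ is precisely what makes $\beta_{0}(2-\beta_{0})>0$, so that \eqref{lemma6_eq2} records a genuine contraction toward any feasible $z$ whenever $g^{+}(v)>0$. No serious obstacle is anticipated.
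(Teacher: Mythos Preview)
Your proof is correct and follows the standard Polyak argument: projection non-expansiveness (Lemma~\ref{lemma1}(b)) reduces the question to bounding $\|w-z\|^{2}$, and the subgradient inequality for $g^{+}$ at $v$ supplies $d^{T}(v-z)\ge g^{+}(v)$, after which the algebra is immediate. The paper does not give its own proof of Lemma~\ref{lemma6}; it is stated as a cited result from \cite{Polyak1969}, so there is nothing to compare against beyond noting that your argument is precisely the classical one.
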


Based on (\ref{lemma6_eq1}) given in Lemma~\ref{lemma6}, in order to find the point $x$ satisfying $g(x)\leq0$ with $g(x)$ being a convex function, an iterative rule can be correspondingly designed as
\begin{subequations}\label{pdfi}
\begin{align}\label{pdfi-a}
x(t+1)=P_{Y}\bigg(x(t)-\beta_{0}\frac{g^{+}(x(t))}{\|d^{\prime}\|^{2}}d^{\prime}\bigg),
\end{align}
with an arbitrary initial value $x(0)$, and $d^{\prime}\in\partial g^{+}(x(t))$ when $g^{+}(x(t))>0$ and $d^{\prime}=d_{0}\neq0$ otherwise. Clearly, the above iterative rule (\ref{pdfi-a}) can be equivalently written as
\begin{align}\label{pdfi-b}
x(t+1)=P_{Y}\big(x(t)-\beta(x(t))\partial g^{+}(x(t)))\big),
\end{align}
\end{subequations}
which can be approximately seen as the classical projected subgradient decent iteration, with the nonnegative step-size $\beta(x(t))=\beta_{0}\frac{g^{+}(x(t))}{\|d\|^{2}}$ if $g^{+}(x(t))>0$ and $\beta(x(t))=0$ otherwise. Since $\beta(x(t))$ is positive provided that $g^{+}(x(t))>0$, the value $g^{+}(x(t+1))$ will become smaller than $g^{+}(x(t))$ when $g^{+}(x(t))>0$, which means that the distance from $x(t+1)$ to the set $\{y\mid g(y)\leq0\}$ will be smaller than the distance from $x(t)$ to the set $\{y\mid g(y)\leq0\}$. Furthermore, for an arbitrary $z\in\{y\mid g(y)\leq0\}$, the relation between $x(t+1)$ and $z$ can be characterized as
\begin{align*}
\|x(t+1)-z\|^{2}\leq\|x(t)-z\|^{2}-\beta_{0}(2-\beta_{0})\frac{(g^{+}(x(t)))^{2}}{\|d^{\prime}\|^{2}}.
\end{align*}
In addition, it can also be concluded that the above iteration rule (\ref{pdfi-b}) will stop at the point $x$ satisfying $g^{+}(x(t))=0$ or $\partial g^{+}(x(t))=0$, both of which can imply $g(x)\leq0$ with the consideration that $g(x)$ is convex.


\section{Main Results}\label{sec:pam}

\subsection{Problem Formulation}\label{sec:pf}

In this paper, the optimization problem with $N$ nonidentical inequality constraints and $N$ nonidentical closed convex set constraints is researched, which is formulated as
\begin{equation}\label{problem_eq1}
\begin{split}
    \min &~ f(x)= \sum\limits_{i=1}^{N}f_{i}(x) \\
    {\rm s.t.} &~~ g_{i}(x)\leq0,\\
               &~~ x\in X_i,\ i=1,2,\cdots,N,
  \end{split}
\end{equation}
where $x\in \mathbb{R}^{n}$, $f_{i}:\mathbb{R}^{n}\rightarrow\mathbb{R}$ and $g_{i}:\mathbb{R}^{n}\rightarrow\mathbb{R}$ are convex functions, and $X_{i}\subseteq \mathbb{R}^{n}$ are closed convex sets. Furthermore, a directed graphs sequence $\{\mathcal{G}(k)\}$ is introduced to describe local interactions between nodes in this paper. 
Slater's condition is first assumed for the problem (\ref{problem_eq1}).
\begin{assumption}\label{asm3}
There exists a vector $\hat{x}\in\textbf{relint}\,X=\bigcap\limits_{i=1}^{N} X_{i}$ such that $g_{i}(\hat{x})<0$ for all $i=1,2,\cdots,N$.
\end{assumption}

The optimality conditions for the problem setup are as follows.
Let $X_{0}$ and $X^{*}$ denote the feasible solutions set and the optimal solutions set of the considered problem, respectively. Specially, under Assumption~\ref{asm3}, we can get that $X_{0}$ and $X^{*}$ are nonempty. Meanwhile, let $x^{*}$ be an optimal solution to the problem (\ref{problem_eq1}) and assume that the optimal function value $f^{*}$ is finite. Particularly, the following properties of $x^{*}$ play an important role in the subsequent analysis:
\begin{equation*}
f(x^{*})\leq f(x) \text{ for all }x\in X_{0},\ g_{0}(x^{*})\leq0,\text{ and }x^{*}\in X,
\end{equation*}
with $g_{0}(x)=\max\{g_{1}(x),g_{2}(x),\cdots,g_{N}(x)\}$.

\subsection{Algorithm Development}\label{sec:ad}

First, to solve the above optimization problem (\ref{problem_eq1}), a distributed discrete-time algorithm is developed as below:
\begingroup
\allowdisplaybreaks
\begin{subequations}\label{algorithm1_eq1}
\begin{align}
v_{i}(t)&=\sum\limits_{j=1}^{N}A_{ij}(t)x_{j}(t)-y_{i}(t),\label{algorithm1_eq1a}\\
x_{i}(t+1)&=P_{X_{i}}\big(v_{i}(t)-\beta k_{i}(t)\big),\label{algorithm1_eq1b}\\
y_{i}(t+1)&=\sum\limits_{j=1}^{N}B_{ij}(t)y_{j}(t)+\alpha(t+1)\nabla f_{i}(x_{i}(t+1))-\alpha(t)\nabla f_{i}(x_{i}(t)),\label{algorithm1_eq1c}
\end{align}
\end{subequations}
\endgroup
where
\vspace{-0.5ex}
\begin{equation*}
k_{i}(t)=\frac{g^{+}_{i}(v_{i}(t))}{\|d_{i}(t)\|^{2}}d_{i}(t),
\end{equation*}
with
\vspace{-0.5ex}
\begin{equation*}
d_{i}(t)\!=\!\left\{\!\!\!
\begin{array}{l@{\hspace{0.8em}}l}
\partial g_{i}^{+}(v_{i}(t)),
           & g_{i}^{+}(v_{i}(t))\neq0,\\
d_{0} \mbox{ (a nonzero constant vertor)}, & g_{i}^{+}(v_{i}(t))=0.
\end{array}\right.
\end{equation*}
Moreover, $\{A(t)\}\subseteq \mathbb{R}^{N\times N}$ is a nonnegative row stochastic weight matrices sequence associated with the given directed graphs sequence $\{\mathcal{G}(t)\}$, whose entries can be defined as $A_{ij}(t)=\frac{1}{d_{i}^{+}(t)}$ if $(i,j)\subseteq\mathcal{E}(t)$ and $A_{ij}(t)=0$ otherwise; and $\{B(t)\}\subseteq \mathbb{R}^{N\times N}$ is a nonnegative column stochastic weight matrices sequence associated with the given directed graphs sequence $\{\mathcal{G}(t)\}$, whose entries can be defined as $B_{ij}(t)=\frac{1}{d_{j}^{-}(t)}$ if $(i,j)\subseteq\mathcal{E}(t)$ and $B_{ij}(t)=0$ otherwise.
In addition, $\{\alpha(t)\}$ is a positive decaying step-size sequence satisfying $\sum\limits_{t=0}^{\infty}\alpha(t)=\infty$ and $\sum\limits_{t=0}^{\infty}\alpha^{2}(t)<\infty$, and $\beta$ is a constant step-size satisfying $0<\beta<2$. Specially, we select $x_{i}(0)$ as an arbitrary value and $y_{i}(0)=\alpha(0)\nabla f_{i}(x_{i}(0))$.

It is worth mentioning that the algorithm~(\ref{algorithm1_eq1}) is designed on the basis of the push-pull mechanism with some significant improvements. Thus, we first give some necessary description of the push-pull based algorithms before providing some insights into the algorithm~(\ref{algorithm1_eq1}). In \cite{Pu2018CDC}, the push-pull/AB algorithm was developed as
\begingroup
\allowdisplaybreaks
\begin{align*}
x_{i}(t+1)&=\sum\limits_{j=1}^{N}A_{ij}x_{j}(t)-\alpha y_{i}(t),\\
y_{i}(t+1)&=\sum\limits_{j=1}^{N}B_{ij}y_{j}(t)+\nabla f_{i}(x_{i}(t+1))-\nabla f_{i}(x_{i}(t)),
\end{align*}
\endgroup

\vspace{-1ex}
\noindent
where $\{A_{ij}\}$ and $\{B_{ij}\}$ are respectively a nonnegative row stochastic matrices sequence and a nonnegative column stochastic matrices sequence associated with the given static unbalanced graph, and $\alpha$ is a desired constant step-size. Moveover, the initial values $x_{i}(0)$ can be arbitrarily selected and $y_{i}(0)=\nabla f_{i}(x_{i}(0))$ for all $i$. Different kinds of push-pull/AB algorithms were designed for different cases in \cite{Xin2015SCL,Xin2020TAC,Saadatniaki2018} and the more detailed information on the push-pull/AB algorithms can be found in \cite{Xin2020Proc}. Clearly, it can be seen from \cite{Xin2015SCL,Pu2018CDC,Xin2020TAC,Saadatniaki2018} that the constant step-sizes were usually employed in the iterations of the state variables $x_{i}(t)$ in the push-pull/AB algorithms. Furthermore, it can also be noticed from the above existing results that the convergence analysis of the push-pull/AB algorithms depends heavily on constructing some linear matrix inequalities with the strong convexity properties being imposed on all local objective functions and only the unconstrained optimization problems can be dealt with. Nowadays, it is still rather difficult to solve the constrained optimization problem by employing the push-pull/AB algorithms, which remains as an open issue.

In this work, in order to effectively integrate the methods of handling the nonidentical local inequality constraints and closed convex set constraints into the classical push-pull mechanism, the decaying step-size $\alpha(t)$ is newly employed in our improved push-pull mechanism. Specifically, the decaying step-size $\alpha(t)$ is involved in the iterations of the auxiliary variables $y_{i}(t)$ rather than the state variables $x_{i}(t)$ (see the iterative rule (\ref{algorithm1_eq1c})). Thus, the auxiliary variables $y_{i}(t)$ can track the gradient terms $\alpha(t)\mu^{i}(t-1)\sum\limits_{i=1}^{N}\nabla f_{i}(x_{i}(t))$ with $\mu^{i}(t)$ as introduced in Lemma~\ref{lemma5}, and the tracking errors can be clearly characterized by some nonnegative variables which can be well analyzed under our improved push-pull mechanism now. Accordingly, the iterative rule (\ref{algorithm1_eq1b}) can be approximately written as
\begin{equation}\label{eqn:approx_al}
x_{i}(t+1)=P_{X_{i}}(v_{i}^{\prime}(t)-\beta k^{\prime}_{i}(t)),
\end{equation}
where $v_{i}^{\prime}(t)$ is defined by replacing all terms $y_{i}(t)$ in $v_{i}(t)$ with $\alpha(t)\mu^{i}(t-1)\sum\limits_{i=1}^{N}\nabla f_{i}(x_{i}(t))$ and $k^{\prime}_{i}(t)$ is defined by replacing all terms $v_{i}(t)$ in $k_{i}(t)$ with $v_{i}^{\prime}(t)$. Based on Lemma~\ref{lemma6}, it can be obtained that $v_{i}^{\prime}(t)$ will move to the set $\{y\mid g_{i}(y)\leq0\}$. Moreover, noting that $\lim\limits_{t\to\infty}\alpha(t)=0$ and $\mu^{i}(t-1)\sum\limits_{i=1}^{N}\nabla f_{i}(x_{i}(t))$ is uniformly bounded with respect to $t$, we can get that $z_{i}(t)=\sum\limits_{j=1}^{N}A_{ij}(t)x_{j}(t)$ will also move to the set $\{y\mid g_{i}(y)\leq0\}$. Additionally, since $\{A(t)\}$ is the nonnegative row stochastic matrices sequence, the term $\sum\limits_{j=1}^{N}A_{ij}(t)x_{j}(t)$ will enable $x_{i}(t)$ to move towards the varying consensus state $\sum\limits_{i=1}^{N}\pi^{i}(t)x_{i}(t)$ with $\pi^{i}(t)$ as introduced in Lemma~\ref{lemma4}. Thus, it follows that
\begin{equation*}
\lim\limits_{t\to\infty}\bigg\|x_{i}(t)-\sum\limits_{i=1}^{N}\pi^{i}(t)x_{i}(t)\bigg\|=\lim\limits_{t\to\infty}\|x_{i}(t)-z_{i}(t)\|=0,
\end{equation*}
i.e., $x_{i}(t)$ will also move to the set $\{y\mid g_{i}(y)\leq0\}$ and then to the set $\{y\mid g_{i}(y)\leq0,i=1,2,\cdots,N\}$ since all $x_{i}(t)$ will reach consensus. Furthermore, each varying state $x_{i}(t)$ is also controlled by the projection operation on the set $X_{i}$ and pushed by the gradient term $\alpha(t)\mu^{i}(t-1)\sum\limits_{i=1}^{N}\nabla f_{i}(x_{i}(t))$ to move towards the set $X_{i}$ and then to the optimal solution set of the optimization problem
\begin{equation}\label{problem3}
    \min\, \sum\limits_{i=1}^{N} f_{i}(x)\ {\rm s.t.}\ g_{i}(x)\leq0\,\ x\in X_i,\ i=1,2,\cdots,N.
\end{equation}
Consequently, upon reaching consensus, all states will converge to a common optimal solution of the considered problem (\ref{problem_eq1}).

Here, we also provide some insights into the algorithm (\ref{algorithm1_eq1}) from the perspective of centralized iteration. The corresponding centralized iteration of the algorithm~(\ref{algorithm1_eq1}) can be formulated as
\begin{equation}\label{eqn:ci}
x(t+1)=P_{X}(x(t)-\alpha(t)\nabla f(x(t))-\beta k(t)),
\end{equation}
where
\begin{equation*}
k(t)=\left\{
\begin{array}{l@{\hspace{1em}}l}
\displaystyle
\frac{g^{+}_{0}(v(t))}{\|\partial g_{0}^{+}(v(t))\|^{2}}\partial g_{0}^{+}(v(t)),
           & g_{0}^{+}(v(t))\neq0,\vspace{0.5ex}\\
0, & \mbox{otherwise},
\end{array}\right.
\end{equation*}
with $v(t)=x(t)-\alpha(t)\nabla f(x(t))$.
Moreover, the convergence property of the above centralized iteration (\ref{eqn:ci}) can be deduced from Lemma~\ref{lemma3} and Lemma~\ref{lemma6}, which can be found in \cite{Nedic2011MP}. Indeed, in the remainder of the paper, we will resort to the techniques used in the convergence analysis of the centralized iteration together with characterizing the error terms $\|y_{i}(t)-\alpha(t)\mu^{i}(t-1)\sum\limits_{i=1}^{N}\nabla f_{i}(x_{i}(t))\|$ and $\alpha(t)\|x_{i}(t)-\sum\limits_{i=1}^{N}\pi^{i}(t)x_{i}(t)\|$ by some well analyzed nonnegative variables to complete the convergence analysis of the developed distributed algorithm (\ref{algorithm1_eq1}), which will be clearly shown in Section~\ref{sec:PT}.

\begin{remark}\label{remark1}
It can be concluded from the above discussion that some significant improvements on the classical push-pull/AB algorithms are made in our paper. First, the decaying step-size $\alpha(t)$ is newly employed in our improved push-pull mechanism and the decaying step-size is involved in the iterations of the auxiliary variables $y_{i}(t)$ rather than the state variables $x_{i}(t)$. Second, the methods of handling the nonidentical local inequality constraints and closed convex set constraints are effectively integrated into our improved push-pull mechanism. As a consequence, the convergence analysis of the algorithm~(\ref{algorithm1_eq1}), as to be shown in Section~\ref{sec:PT}, is conducted based on Lemma~\ref{lemma3}, which is absolutely distinct from the convergence analyses as given in \cite{Xin2015SCL,Pu2018CDC,Xin2020TAC,Saadatniaki2018} for the existing push-pull/AB algorithms. Third, it is not necessary to assume the strong convexity of the local objective functions or to construct any linear matrix inequalities for proceeding with the convergence analysis. More importantly, the optimization problem with multiple nonidentical local constraints is successfully addressed over a sequence of time-varying unbalanced graphs under our improved push-pull mechanism in this work, which is known as a very challenging issue in the field of distributed optimization so far. Furthermore, it will also be convenient to extend the results obtained in this paper to the case with other kinds of distributed constrained optimization problems.
\end{remark}

\subsection{Main Theorems}\label{sec:mt}

Before going on, the definition of $L$-smooth function and several common assumptions are introduced for the establishment of the main theorems.

\begin{definition}\label{def1}
A differentiable function $h$ : $\mathbb{R}^{n}\rightarrow \mathbb{R}$ is said to be $L$-smooth if the following condition holds: $\forall x_{1}, x_{2}\in \mathbb{R}^{n}$,
\begin{equation*}
\|\nabla h(x_{1})-\nabla h(x_{2})\|\leq L\|x_{1}-x_{2}\|.
\end{equation*}
\end{definition}

\begin{assumption}\label{asm1}
For all $i=1,2,\cdots,N$, functions $f_{i}(x)$ are $L$-smooth and functions $g_{i}(x)$ have continuous gradients. 
\end{assumption}

\begin{assumption}\label{asm2}
For all $i=1,2,\cdots,N$, $X_{i}\subseteq \mathbb{R}^{n}$ are compact sets.
\end{assumption}

\begin{proposition}\label{prop2}
There exists a constant $R$ such that
\begin{equation*}
\text{dist}(x,X_{0})\leq R\max\Big\{\max\limits_{1\leq i\leq N}\text{dist}(x,X_{i}),\max\limits_{1\leq i\leq N}g_{i}^{+}(x)\Big\},
\end{equation*}
for all $x\in U=\text{conv}\Big(\bigcup\limits_{i=1}^{N}X_{i}\Big)$.
\end{proposition}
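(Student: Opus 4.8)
The plan is to split $\text{dist}(x,X_0)$, where the feasible set is $X_0=\{x\in X:g_i(x)\le 0,\ i=1,\dots,N\}$ and $X=\bigcap_{i=1}^N X_i$, into a ``set'' part (distance to $X$) and an ``inequality'' part (distance from $X$ into $X_0$), to bound each, and to combine. First I would record the standing facts. By Assumption~\ref{asm2} the set $\bigcup_i X_i$ is compact, hence so is its convex hull $U$; $X$ is nonempty (it contains the Slater point $\hat x$ of Assumption~\ref{asm3}), closed and convex, and $X_0\subseteq X\subseteq U$ is nonempty and closed. Writing $g_0=\max_i g_i$, Assumption~\ref{asm1} makes each $g_i$ continuously differentiable, hence Lipschitz on the compact convex set $U$, so $g_0$ and $g_0^+$ are $L_g$-Lipschitz on $U$ for a common $L_g$. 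Set $D_U=\max_{u,v\in U}\|u-v\|<\infty$, $\delta'=\min_i\big(-g_i(\hat x)\big)>0$, and $m(x)=\max\{\max_i\text{dist}(x,X_i),\ \max_i g_i^+(x)\}$, so that $g_0^+(x)=\max_i g_i^+(x)\le m(x)$ and $\hat x\in X\subseteq U$.

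For the inequality part I would fix $x\in U$, let $\bar y=P_X(x)$ (so $\|x-\bar y\|=\text{dist}(x,X)$), and slide $\bar y$ toward $\hat x$ along $z_\lambda=(1-\lambda)\bar y+\lambda\hat x$. Since $X$ is convex, $z_\lambda\in X$ for $\lambda\in[0,1]$; by convexity of each $g_i$ and $g_i(\hat x)\le-\delta'$ one gets $g_i(z_\lambda)\le(1-\lambda)g_0^+(\bar y)-\lambda\delta'$, which is $\le 0$ for every $i$ once $\lambda=\lambda^*:=g_0^+(\bar y)/(g_0^+(\bar y)+\delta')\in[0,1)$. Hence $z_{\lambda^*}\in X_0$ and $\text{dist}(\bar y,X_0)\le\lambda^*\|\bar y-\hat x\|\le(D_U/\delta')\,g_0^+(\bar y)$. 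Passing from $\bar y$ back to $x$ via Lipschitzness of $g_0^+$ gives $g_0^+(\bar y)\le g_0^+(x)+L_g\,\text{dist}(x,X)$, and the triangle inequality $\text{dist}(x,X_0)\le\|x-\bar y\|+\text{dist}(\bar y,X_0)$ then yields
\begin{equation*}
\text{dist}(x,X_0)\le\Big(1+\frac{D_UL_g}{\delta'}\Big)\,\text{dist}(x,X)+\frac{D_U}{\delta'}\,g_0^+(x),\qquad x\in U.
\end{equation*}

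The set part is where I expect the real difficulty: I still need $\text{dist}(x,X)=\text{dist}\big(x,\bigcap_i X_i\big)\le C_1\max_i\text{dist}(x,X_i)$ for all $x\in U$, i.e. (bounded) linear regularity of the finite convex family $\{X_i\}$ on the bounded set $U$. This is not automatic for convex sets — it can fail when two of the $X_i$ are merely tangent — and it genuinely relies on a constraint qualification; under the standing assumptions (in particular $\hat x$ lying in the relative interior of $\bigcap_i X_i$) I would prove it by contradiction and compactness: if no constant worked there would be $x_k\in U$ with $\text{dist}(x_k,X)>k\max_i\text{dist}(x_k,X_i)$; since the left side is $\le D_U$, $\max_i\text{dist}(x_k,X_i)\to 0$, so along a convergent subsequence $x_k\to x^\star\in\bigcap_i X_i=X$, and the needed estimate follows from the local linear regularity of $\{X_i\}$ at $x^\star$ — the step in which the qualification is actually used. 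Granting this, $\text{dist}(x,X)\le C_1 m(x)$ and $g_0^+(x)\le m(x)$, and the displayed inequality gives $\text{dist}(x,X_0)\le R\,m(x)$ on $U$ with $R=(1+D_UL_g/\delta')C_1+D_U/\delta'$, as claimed.

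I would also note that the whole proposition follows from a single contradiction/compactness argument: were it false, there would be $x_k\in U\setminus X_0$ with $\text{dist}(x_k,X_0)>k\,m(x_k)$; boundedness of $\text{dist}(\cdot,X_0)$ on $U$ forces $m(x_k)\to 0$, a convergent subsequence has limit $\bar x$ with $m(\bar x)=0$, i.e. $\bar x\in X_0$, and $\text{dist}(x_k,X_0)\to 0$; it then suffices to prove the bound locally near an arbitrary $\bar x\in X_0$, which is the local version of the two steps above. In either route the delicate point is the (local) linear regularity of the constraint sets $X_i$.
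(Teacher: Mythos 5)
The first thing to note is that the paper contains no actual proof of Proposition~\ref{prop2}: it is only asserted to ``naturally hold'' under Assumption~\ref{asm2} because the relevant quantities are bounded on $U$, with \cite{Mai2019Aut} and \cite{Nedic2011MP} cited — and in those works the analogous property (constraint regularity / a metric-regularity constant) is a standing \emph{assumption}, not a derived fact. Your decomposition into a set part and an inequality part is sensible, and your Slater-point argument for the inequality part (sliding $P_{X}(x)$ toward $\hat x$ along the segment, choosing $\lambda^{*}=g_{0}^{+}(\bar y)/(g_{0}^{+}(\bar y)+\delta')$, and using Lipschitzness of $g_{0}^{+}$ on the compact set $U$) is correct and more detailed than anything the paper provides; it correctly reduces the whole claim to the bounded linear regularity estimate $\text{dist}(x,X)\leq C_{1}\max_{i}\text{dist}(x,X_{i})$ on $U$.

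That reduction, however, is exactly where your proposal has a genuine gap, and your compactness/contradiction sketch does not close it: it only reduces matters to ``local linear regularity of $\{X_{i}\}$ at $x^{\star}$,'' which is precisely the property that can fail, and the qualification you invoke does not supply it. Assumption~\ref{asm3} only requires $\hat x\in\textbf{relint}\,X$ with $g_{i}(\hat x)<0$, and since every nonempty convex subset of $\mathbb{R}^{n}$ has nonempty relative interior, this imposes no transversality on the family $\{X_{i}\}$ at all. Concretely, in $\mathbb{R}^{2}$ take $X_{1}=\{(u,w):w\geq u^{2}\}\cap B$ and $X_{2}=\{(u,w):w\leq -u^{2}\}\cap B$ with $B$ a large closed ball, and $g_{1}=g_{2}=\|\cdot\|^{2}-1$: Assumptions~\ref{asm3}, \ref{asm1}, and~\ref{asm2} all hold with $\hat x=(0,0)$ and $X_{0}=X=\{(0,0)\}$, yet $x_{t}=(t,0)\in U$ gives $\text{dist}(x_{t},X_{0})=t$ while $\max_{i}\text{dist}(x_{t},X_{i})\leq t^{2}$ and $g_{i}^{+}(x_{t})=0$, so no finite $R$ exists. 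Hence the set-part estimate (and the proposition itself) is not a consequence of the stated assumptions; one needs an additional hypothesis such as $\bigcap_{i}\mathrm{int}\,X_{i}\neq\emptyset$, polyhedrality of the $X_{i}$, or simply postulating the regularity constant as in \cite{Mai2019Aut} and \cite{Nedic2011MP}. Your attempt is incomplete precisely at this step — and, to be fair, so is the paper's one-line justification via boundedness.
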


Clearly, under Assumptions~\ref{asm2}, Proposition~\ref{prop2} will naturally hold since the involved variables are uniformly bounded when $x\in U$. Moreover, Proposition~\ref{prop2} (also known as the constraint regularity property) is very important and standard in dealing with the optimization problems involving nonidentical constraints, which can also be found in \cite{Mai2019Aut} and \cite{Nedic2011MP}.

\begin{assumption}\label{asm4}
The time-varying directed graph sequence $\{\mathcal{G}(t)\}$ is uniformly jointly strongly connected.
\end{assumption}

\begin{remark}\label{remark3}
Under Assumptions~\ref{asm1} and~\ref{asm2}, we can conclude that $\nabla f_{i}(x_{i}(t))$ can be uniformly bounded by a positive constant $M$, i.e., $\|\nabla f_{i}(x_{i}(t))\|\leq M$, since $\nabla f_{i}(x_{i}(t))$ are continuous and $x_{i}(t)$ generated by the iteration rule (\ref{algorithm1_eq1b}) are contained in the compact set $U$. Besides, Assumption~\ref{asm4} is also very common and standard in dealing with the optimization problems over time-varying unbalanced directed graphs.
\end{remark}

Now, we are prepared to develop our first main theorem, which demonstrates the convergence property of the developed distributed discrete-time algorithm (\ref{algorithm1_eq1}).

\begin{theorem}\label{theorem1}
Let Assumptions~\ref{asm3}, \ref{asm1}, \ref{asm2}, and~\ref{asm4} hold. Then for all $i=1,2,\cdots,N$, the states $x_{i}(t)$ generated by the algorithm (\ref{algorithm1_eq1}) converge to a common optimal solution $s^{*}$ of the problem (\ref{problem_eq1}), i.e., $\lim\limits_{t\to\infty}x_{i}(t)=s^{*}$.
\end{theorem}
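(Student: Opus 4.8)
The plan is to mimic the convergence analysis of the centralized iteration~(\ref{eqn:ci}), which is governed by Lemmas~\ref{lemma3} and~\ref{lemma6}, while controlling the two distributed error terms $e_y(t)=\sum_{i}\|y_i(t)-\alpha(t)\mu^i(t-1)\sum_{j}\nabla f_j(x_j(t))\|$ and $e_x(t)=\alpha(t)\sum_{i}\|x_i(t)-\sum_{j}\pi^j(t)x_j(t)\|$ by auxiliary nonnegative sequences that are summable (or enter the recursion in a summable way). First I would set up notation: write $\bar z(t)=\sum_{i}\pi^i(t)x_i(t)$ for the (row-stochastic) weighted average state, and let $S(t)=\sum_{j}\nabla f_j(x_j(t))$. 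Using Remark~\ref{remark3}, all gradients $\nabla f_i(x_i(t))$ are uniformly bounded by $M$, and all iterates live in the compact set $U$, so every quantity in sight is uniformly bounded; this is what makes the error terms tractable.

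The argument then proceeds in four blocks. (1) \emph{Gradient-tracking / $y$-error bound.} Unrolling~(\ref{algorithm1_eq1c}) with $y_i(0)=\alpha(0)\nabla f_i(x_i(0))$ and column-stochasticity of $\{B(t)\}$ gives $\sum_i y_i(t)=\alpha(t)S(t)$ exactly; combining this conservation law with Lemma~\ref{lemma5}(a) (geometric mixing of $B(k:s)$ toward $\mu^i(k)$) and the fact that consecutive increments $\alpha(t+1)\nabla f_i(x_i(t+1))-\alpha(t)\nabla f_i(x_i(t))$ are $O(\alpha(t)+|\alpha(t+1)-\alpha(t)|+\alpha(t)\|x_i(t+1)-x_i(t)\|)$, I would show $e_y(t)\le C\sum_{s=0}^{t}\lambda_2^{\,t-s}\big(\alpha^2(s)+\|x(s+1)-x(s)\|^2+\text{l.o.t.}\big)$, which by Lemma~\ref{lemma2}(b) is summable once $\sum\alpha^2(t)<\infty$ and $\sum\|x(s+1)-x(s)\|^2<\infty$ are known. (2) \emph{Consensus / $x$-error bound.} The map $z_i(t)=\sum_j A_{ij}(t)x_j(t)$ contracts toward $\bar z(t)$ by Lemma~\ref{lemma4}(a); since the perturbation applied to $x_i$ at each step (namely $-y_i(t)-\beta k_i(t)$, followed by projection onto $X_i$) has norm $O(\alpha(t)+\beta g_i^+(v_i(t)))$ and projections are nonexpansive (Lemma~\ref{lemma1}(a)), I would obtain $\sum_i\|x_i(t)-\bar z(t)\|\le C\sum_{s}\lambda_1^{\,t-s}\big(\alpha(s)+\text{terms already controlled}\big)$, hence $e_x(t)\to 0$ and in fact $\sum_t \alpha(t)e_x(t)<\infty$-type estimates after multiplying by $\alpha(t)$ and invoking Lemma~\ref{lemma2}. (3) \emph{The main descent recursion.} Taking any $x^*\in X^*$, I would expand $\|x_i(t+1)-x^*\|^2$ using~(\ref{algorithm1_eq1b}): first apply Lemma~\ref{lemma6} (with $Y=X_i$, $g=g_i$, $v=v_i(t)$, $\beta_0=\beta$) to absorb the inequality-constraint step, producing the gain term $-\beta(2-\beta)\sum_i (g_i^+(v_i(t)))^2/\|d_i(t)\|^2$; then handle $v_i(t)=z_i(t)-y_i(t)$ by replacing $y_i(t)$ with $\alpha(t)\mu^i(t-1)S(t)$ up to the error $e_y(t)$, replacing $z_i(t)$ by $\bar z(t)$ up to $e_x(t)$, and using $L$-smoothness and convexity of $f$ to extract $-2\alpha(t)\sum_i\mu^i(t-1)\big(f(\bar z(t))-f^*\big)$ plus $O(\alpha^2(t))$ plus cross terms bounded by $\alpha(t)(e_y(t)+e_x(t)+\text{dist}(\bar z(t),X_0))$. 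The constraint-regularity Proposition~\ref{prop2} is used here to bound $\text{dist}(\bar z(t),X_0)$ — and hence $f(\bar z(t))-f^*$ from below when $\bar z(t)\notin X_0$ — in terms of $\max_i g_i^+(v_i(t))$ and $\max_i\text{dist}(\bar z(t),X_i)$, both of which feed back into the negative "$-d(k)$" term or into summable remainders. Summing over $i$ (weighting appropriately by $\pi^i$, using Lemma~\ref{lemma4}(c) $\pi^T(k)=\pi^T(k+1)A(k)$ to telescope the consensus part) yields a scalar recursion of the form
\[
a(t+1)\le (1+b(t))\,a(t)-d(t)+c(t),\qquad \sum_t b(t)<\infty,\ \sum_t c(t)<\infty,
\]
where $a(t)=\sum_i\pi^i(t)\|x_i(t)-x^*\|^2$ (or a suitable Lyapunov combination also including $\sum\|x_i-\bar z\|^2$ and the $y$-error), $d(t)\ge 0$ collects $2\alpha(t)\theta_1(f(\bar z(t))-f^*)^+$ together with the Polyak gain term, and $c(t)$ collects all the $O(\alpha^2(t))$ and summable-error contributions. (4) \emph{Conclusion.} Lemma~\ref{lemma3} gives that $a(t)\to a_\infty\ge 0$ and $\sum_t d(t)<\infty$; combined with $\sum_t\alpha(t)=\infty$ this forces $\liminf_t\big(f(\bar z(t))-f^*\big)^+=0$ and $\liminf_t g_0^+(\bar z(t))=0$, so along a subsequence $\bar z(t)\to$ some $s^*\in X^*$ (using compactness and continuity of $f,g_i$); since $\|x_i(t)-x^*\|$ converges for \emph{every} $x^*\in X^*$ and consensus error $\to0$, the standard argument upgrades the subsequential limit to a full limit, giving $\lim_t x_i(t)=s^*$ for all $i$.

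The main obstacle I expect is block~(3): correctly choosing the Lyapunov function and the weights so that the consensus terms, the gradient-tracking error, and the constraint-feasibility progress all fit simultaneously into the single scalar template of Lemma~\ref{lemma3}. In particular, the coupling is circular — the $y$-error bound needs $\sum\|x(s+1)-x(s)\|^2<\infty$, which is itself one of the outputs ($\sum d(t)<\infty$) of the descent recursion — so the estimates in blocks~(1)--(3) must be assembled into one joint recursion (a vector/Lyapunov version) rather than proved sequentially, and care is needed that the constants $C_1,\lambda_1,C_2,\lambda_2,\theta_1,\theta_2,M,R$ enter only multiplicatively into the summable sequences $b(t),c(t)$ and never degrade the sign of $d(t)$. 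The geometric-mixing bounds from Lemmas~\ref{lemma4} and~\ref{lemma5} together with Lemma~\ref{lemma2} are exactly the tool that converts the convolution-type error bounds into summable sequences, so the bookkeeping, though lengthy, should go through; I would defer the more mechanical of these estimates to the Appendix as the paper indicates.
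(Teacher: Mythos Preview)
Your proposal is correct and follows essentially the same architecture as the paper: the four blocks you describe correspond precisely to Lemmas~\ref{lemma7}--\ref{lemma8} and Corollary~\ref{cor1} (gradient tracking), Lemma~\ref{lemma9} (consensus), Lemmas~\ref{lemma10}--\ref{lemma12} (the descent recursion with the composite Lyapunov function $\sum_i\pi^i(t)\|x_i(t)-v\|^2+e(t)$ that absorbs the auxiliary sequences $\gamma_i,\eta_i$), and the final application of Lemma~\ref{lemma3} with the subsequence/compactness argument. The only substantive deviation is that the paper works with $f(s(t))-f^*$ where $s(t)=P_{X_0}(\bar x(t))$ (so the optimality-gap term is automatically nonnegative), rather than with $(f(\bar z(t))-f^*)^+$ as you suggest; both routes invoke Proposition~\ref{prop2} to control $\|\bar x(t)-s(t)\|$, and the paper's choice avoids having to separately argue that the negative part is harmless.
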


\begin{proof}
The detailed proof will be given in the subsequent Subsection~\ref{sec:PT-Thm1}.
\end{proof}

Next, defining the following two variables:
\begingroup
\allowdisplaybreaks
\begin{align*}
\tilde{x}_{i}(t)=\frac{\sum\limits_{k=0}^{t}\alpha(k)x_{i}(k)}{\sum\limits_{k=0}^{t}\alpha(k)},\ \ \tilde{s}(t)=\frac{\sum\limits_{k=0}^{t}\alpha(k)s(k)}{\sum\limits_{k=0}^{t}\alpha(k)},
\end{align*}
\endgroup
with $s(t)=P_{X_{0}}(\bar{x}(t))$ and $\bar{x}(t)=\sum\limits_{i=1}^{N}\pi^{i}(t)x_{i}(t)$, we state our second main theorem showing the rate at which the function values $f(\tilde{x}_{i}(t))$ converge to the optimal value $f^{*}$.

\begin{theorem}\label{theorem2}
Let Assumptions~\ref{asm3}, \ref{asm1}, \ref{asm2}, and~\ref{asm4} hold. Then for the algorithm (\ref{algorithm1_eq1}), there holds
\begin{equation}\label{them2_eq1}
H_{1}\|\tilde{x}_{i}(t)-\tilde{s}(t)\|+f(\tilde{s}(t))-f^{*}\leq L(t),
\end{equation}
with
\begingroup
\allowdisplaybreaks
\begin{align}\label{them2_eq2}
L(t)=\bigg(H_{2}+H_{3}\sum\limits_{k=0}^{t}\alpha^{2}(k)\bigg)\bigg/\sum\limits_{k=0}^{t}\alpha(k),
\end{align}
\endgroup
and $H_{j}$, $j=1,2,3$, being positive constants which will be defined in the subsequent proof of Theorem~\ref{theorem2}. Furthermore, for each $i=1,2,\cdots,N$, there holds
\begingroup
\allowdisplaybreaks
\begin{align}\label{them2_eq3}
|f(\tilde{x}_{i}(t))-f^{*}|\leq\bigg(\frac{NM}{H_{1}}+1\bigg)L(t),
\end{align}
\endgroup
with $M$ being the positive constant as given in Remark~\ref{remark3}.
\end{theorem}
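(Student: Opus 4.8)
The plan is to first establish a "key descent inequality" that tracks the weighted distance of the states to the feasible/optimal set, and then convert it into the stated bounds by a standard averaging (ergodic) argument using the convexity of $f$. Concretely, I would start from the centralized-type analysis described after Lemma~\ref{lemma6}: combining Lemma~\ref{lemma1}(b) (for the projection onto $X_i$), Lemma~\ref{lemma6} applied with $g=g_i$, $Y=X_i$, $v=v_i(t)$, and the $L$-smoothness of $f_i$ from Assumption~\ref{asm1}, I expect to derive — for any feasible $z\in X_0$ and after summing over $i$ with the weights $\pi^i(t)$ from Lemma~\ref{lemma4} — an inequality of the form
\begin{equation*}
\|\bar{x}(t+1)-z\|^{2}\leq\|\bar{x}(t)-z\|^{2}-2\alpha(t)\big(f(\bar x(t))-f(z)\big)+c_{1}\alpha^{2}(t)+c_{2}\alpha(t)e(t)-\beta(2-\beta)\sum_{i}\pi^{i}(t)\tfrac{(g_i^{+}(v_i(t)))^{2}}{\|d_i(t)\|^{2}},
\end{equation*}
where $e(t)$ collects the consensus error $\sum_i\|x_i(t)-\bar x(t)\|$ and the gradient-tracking error $\sum_i\|y_i(t)-\alpha(t)\mu^i(t-1)\sum_j\nabla f_j(x_j(t))\|$. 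From Theorem~\ref{theorem1}'s proof (which I may invoke) these error terms are summable against $\alpha(t)$, i.e. $\sum_t\alpha(t)e(t)<\infty$, and $\sum_t\alpha^2(t)<\infty$ by the step-size assumption, while $\|\nabla f_i\|\le M$ (Remark~\ref{remark3}) and the sets are compact (Assumption~\ref{asm2}), so all constants are finite.

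Next I would take $z=s^{*}\in X^{*}\subseteq X_0$, telescope the inequality from $k=0$ to $t$, and drop the nonpositive last term, obtaining
\begin{equation*}
\sum_{k=0}^{t}\alpha(k)\big(f(\bar x(k))-f^{*}\big)\leq \tfrac12\|\bar{x}(0)-s^{*}\|^{2}+\tfrac{c_1}{2}\sum_{k=0}^{t}\alpha^{2}(k)+\tfrac{c_2}{2}\sum_{k=0}^{t}\alpha(k)e(k)=:H_2'+H_3'\sum_{k=0}^{t}\alpha^2(k)
\end{equation*}
(the $\sum\alpha(k)e(k)$ piece being absorbed into a constant since it converges). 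I also need the feasibility direction: applying Proposition~\ref{prop2} together with the Slater-based bound on the dual-type multiplier (Assumption~\ref{asm3}) gives a reverse inequality controlling $\mathrm{dist}(\bar x(k),X_0)$, hence controlling $\|\bar x(k)-s(k)\|$, which is where the constant $H_1$ and the term $H_1\|\tilde x_i(t)-\tilde s(t)\|$ on the left of \eqref{them2_eq1} come from; the factor $f(\tilde s(t))-f^{*}$ then follows from convexity of $f$ (Jensen) applied to $\tilde s(t)=\sum_k\alpha(k)s(k)/\sum_k\alpha(k)$ and $f(s(k))\ge f^{*}$ together with a first-order lower bound relating $f(\bar x(k))$ and $f(s(k))$. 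Dividing through by $\sum_{k=0}^{t}\alpha(k)$ yields \eqref{them2_eq1}–\eqref{them2_eq2} with $H_2,H_3$ the resulting positive constants.

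Finally, to get \eqref{them2_eq3} I would bound $|f(\tilde x_i(t))-f^{*}|$ by $|f(\tilde x_i(t))-f(\tilde s(t))|+|f(\tilde s(t))-f^{*}|$: the second term is $\le L(t)$ by \eqref{them2_eq1}, and the first is $\le NM\,\|\tilde x_i(t)-\tilde s(t)\|$ using the uniform gradient bound $\|\nabla f\|\le NM$ (Remark~\ref{remark3}) and convexity, while $\|\tilde x_i(t)-\tilde s(t)\|\le L(t)/H_1$ again from \eqref{them2_eq1}; combining gives the factor $NM/H_1+1$. The main obstacle I anticipate is the first step — producing the clean descent inequality with the error term $e(t)$ genuinely $\alpha$-summable — since this requires carefully quantifying how the row-stochastic averaging (Lemma~\ref{lemma4}) drives consensus and how the column-stochastic tracking (Lemma~\ref{lemma5}) makes $y_i(t)$ follow $\alpha(t)\mu^i(t-1)\sum_j\nabla f_j(x_j(t))$, i.e. essentially re-using the contraction estimates already set up for Theorem~\ref{theorem1}; once that is in hand, the passage to the ergodic averages is routine.
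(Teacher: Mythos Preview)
Your overall strategy is sound and the final step deriving \eqref{them2_eq3} matches the paper exactly, but the route you take to \eqref{them2_eq1}--\eqref{them2_eq2} differs from the paper's in one structural point and contains one misconception.

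The paper does \emph{not} drop the nonpositive terms $-\theta_1\sum_i\|\phi_i(t)\|^2$ and $-\tfrac{\theta_1\beta(2-\beta)}{2M^2}\sum_i (g_i^+(z_i(t)))^2$ after telescoping; it keeps them on the left as $\varphi_1(k)+\varphi_2(k)$ (see \eqref{them2_proof_eq7}). These reserved terms are then the budget that absorbs the $\sum_k w_1^2(k)$ and $\sum_k w_2^2(k)$ pieces that inevitably appear when one bounds $\sum_k\alpha(k)\|x_i(k)-s(k)\|$ via Lemma~\ref{lemma9} and Proposition~\ref{prop2} (see \eqref{them2_proof_eq12}--\eqref{them2_proof_eq14}). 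Your alternative---drop those terms up front and instead invoke the summability conclusions of Theorem~\ref{theorem1} to declare $\sum_k\alpha(k)e(k)$ and $\sum_k w_i^2(k)$ finite constants---also works, but it makes $H_2$ depend on the (finite but only implicitly bounded) tails $\sum_{k\ge 0}\|\phi_i(k)\|^2$ and $\sum_{k\ge 0}(g_i^+(z_i(k)))^2$, whereas the paper's constants are expressed directly in terms of the data $C_j,D_j,\lambda_1,\lambda_2,\theta_1,\theta_2,M,\underline M,R$.

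The misconception is your description of where $H_1$ comes from: there is no ``Slater-based dual-type multiplier'' in the argument. The bound $\|\bar x(t)-s(t)\|\le R(MN+1)\sum_i\|x_i(t)-\bar x(t)\|+R\sum_i g_i^+(z_i(t))$ is obtained purely from Proposition~\ref{prop2} (constraint regularity), and $H_1$ is then just the constant that survives after scaling \eqref{them2_proof_eq13} so that the right-hand side can be absorbed by $\varphi_1+\varphi_2$; Slater's condition (Assumption~\ref{asm3}) enters only upstream, to ensure $\|d_i(t)\|\ge\underline M>0$ (Remark~\ref{remark6}) and hence that all the constants are finite. Also note that the Lyapunov quantity in the paper is the weighted sum $\sum_i\pi^i(t)\|x_i(t)-v\|^2+e(t)$ (from Lemma~\ref{lemma12}), not $\|\bar x(t)-v\|^2$ as you wrote; these are not interchangeable, and the $\pi^i(t)$-weighting together with $\pi^T(k)=\pi^T(k+1)A(k)$ is what makes the telescoping clean.
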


\begin{proof}
The detailed proof will be given in the subsequent Subsection~\ref{sec:PT-Thm2}.
\end{proof}

\begin{remark}
Clearly, we can select $\mathcal{O}(\frac{1}{t+1})^{\sigma}$ with $0.5<\sigma\leq1$ as the decaying step-size $\alpha(t)$. Now, let $\alpha(t)=\mathcal{O}\Big(\frac{1}{\sqrt{t+1}}\Big).$
By direct computation, it is easy to get
\begin{align*}
L(t)\sim\mathcal{O}\bigg(\frac{\ln t}{\sqrt{t}}\bigg),
\end{align*}
which means that the terms $\|\tilde{x}_{i}(t)-\tilde{s}(t)\|$, $f(\tilde{s}(t))-f^{*}$ and $|f(\tilde{x}_{i}(t))-f^{*}|$ will all decay to zero at a rate of $\mathcal{O}\big(\frac{\ln t}{\sqrt{t}}\big)$.
This result for the convergence rate is very common and can also be found in the recent works \cite{Nedic2015TAC} and \cite{Mai2019Aut}.
\end{remark}

\section{Proofs of Main Theorems}\label{sec:PT}

\subsection{Road Map for Proofs of Main Theorems}\label{sec:PT-Map}

To begin with, we briefly sketch the main ideas of the proofs of Theorems~\ref{theorem1} and \ref{theorem2}.
\begin{enumerate}[1)]
\item First, we will start the analysis in Lemmas~\ref{lemma7} and~\ref{lemma8} by discussing that under Assumptions~\ref{asm1} and~\ref{asm2}, the auxiliary variables $y_{i}(t)$ can track the global gradient terms $\alpha(t)\mu^{i}(t-1)\sum\limits_{i=1}^{N}\nabla f_{i}(x_{i}(t))$.

\item Second, to provide some preparations for the subsequent analysis, we will argue in Remark~\ref{remark6} that $k_{i}(t)$ in the proposed distributed discrete-time algorithm (\ref{algorithm1_eq1}) can be uniformly bounded with respect to $t$ under Assumption~\ref{asm3}.

\item Third, we will preliminarily describe the consensus errors in Lemma~\ref{lemma9}, based on which we will further characterize the gradient error terms $\|y_{i}(t)-\alpha(t)\mu^{i}(t-1)\sum\limits_{i=1}^{N}\nabla f_{i}(x_{i}(t))\|$ in Lemma~\ref{lemma10}.

\item Fourth, we will analyze the evolution of all states $x_{i}(t)$ in Lemmas~\ref{lemma11} and~\ref{lemma12} and show that, as a consequence of all states reaching consensus, the iteration (\ref{algorithm1_eq1b}) can be approximately seen as the ordinary iteration for solving the problem (\ref{problem3}) and all states $x_{i}(t)$ will converge to a common optimal solution of the considered problem (\ref{problem_eq1}).

\item Finally, with all these preparatory works, we will be ready to give the detailed proofs of our main Theorems~\ref{theorem1} and~\ref{theorem2}.
\end{enumerate}

In addition, all the proofs of several aforementioned intermediate results (including Lemmas~\ref{lemma7}--\ref{lemma12} and Corollary~\ref{cor1}) will be collected in the Appendix so as to maintain a smooth presentation flow.

\subsection{Preparatory Works}\label{sec:PT-Pre}

First, we define the following two intermediate variables:
\begin{equation*}
\tilde{y}(t)=\sum\limits_{i=1}^{N}y_{i}(t),\quad \tilde{f}(t)=\sum\limits_{i=1}^{N}f_{i}(x_{i}(t)).
\end{equation*}
Motivated by the results in \cite{Nedic2015TAC}, \cite{Qu2017TCNS} and \cite{Xi2018TAC}, we will preliminarily characterize the error between the auxiliary variable $y_{i}(t)$ and the global gradient term $\alpha(t)\mu^{i}(t-1)\sum\limits_{i=1}^{N}\nabla f_{i}(x_{i}(t))$ in the following Lemmas~\ref{lemma7} and~\ref{lemma8}.

\begin{lemma}\label{lemma7}
For the algorithm (\ref{algorithm1_eq1}) with given initial values and all $t\geq0$, there holds
\begingroup
\allowdisplaybreaks
\begin{align*}
\tilde{y}(t)=\alpha(t)\nabla\tilde{f}(t).
\end{align*}
\endgroup
\end{lemma}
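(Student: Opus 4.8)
The plan is a short induction on $t$ whose only structural ingredient is the column stochasticity of each weight matrix $B(t)$ together with the prescribed initialization $y_{i}(0)=\alpha(0)\nabla f_{i}(x_{i}(0))$; no assumptions beyond these are needed. Throughout, I read $\nabla\tilde{f}(t)$ as $\sum_{i=1}^{N}\nabla f_{i}(x_{i}(t))$, consistent with $\tilde{f}(t)=\sum_{i=1}^{N}f_{i}(x_{i}(t))$.

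For the base case $t=0$, I would simply sum the initial condition over $i$: $\tilde{y}(0)=\sum_{i=1}^{N}y_{i}(0)=\alpha(0)\sum_{i=1}^{N}\nabla f_{i}(x_{i}(0))=\alpha(0)\nabla\tilde{f}(0)$, which is exactly the claimed identity at $t=0$.

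For the inductive step, assume $\tilde{y}(t)=\alpha(t)\nabla\tilde{f}(t)$. Summing the update rule (\ref{algorithm1_eq1c}) over $i=1,\dots,N$ and interchanging the order of summation in the mixing term gives
\[
\tilde{y}(t+1)=\sum_{j=1}^{N}\Big(\sum_{i=1}^{N}B_{ij}(t)\Big)y_{j}(t)+\alpha(t+1)\nabla\tilde{f}(t+1)-\alpha(t)\nabla\tilde{f}(t).
\]
Because $\{B(t)\}$ is a column stochastic matrices sequence, $\sum_{i=1}^{N}B_{ij}(t)=1$ for every $j$, so the first term collapses to $\tilde{y}(t)$. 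Substituting the inductive hypothesis $\tilde{y}(t)=\alpha(t)\nabla\tilde{f}(t)$ cancels it against the last term and leaves $\tilde{y}(t+1)=\alpha(t+1)\nabla\tilde{f}(t+1)$, closing the induction.

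I do not expect any genuine obstacle: the statement is a conservation identity that the dynamics (\ref{algorithm1_eq1c}) are engineered to enforce. The increment $\alpha(t+1)\nabla f_{i}(x_{i}(t+1))-\alpha(t)\nabla f_{i}(x_{i}(t))$ is precisely what is injected at each node so that, after the column-stochastic mixing (which preserves the aggregate $\sum_{i}y_{i}(t)$), the total $\tilde{y}(t)$ equals $\alpha(t)\nabla\tilde{f}(t)$ on the nose for all $t$. The only point requiring a moment's care is the bookkeeping of the interchange of the double sum, which is harmless since all sums are finite.
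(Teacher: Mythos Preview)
Your proof is correct and takes essentially the same approach as the paper: both sum the update rule (\ref{algorithm1_eq1c}) over $i$, use column stochasticity of $B(t)$ to collapse the mixing term, and then invoke the initialization $y_{i}(0)=\alpha(0)\nabla f_{i}(x_{i}(0))$. The only cosmetic difference is that the paper phrases the argument as ``$\tilde{y}(t)-\alpha(t)\nabla\tilde{f}(t)$ is invariant in $t$ and equals zero at $t=0$,'' whereas you write it out as an explicit induction.
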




Note that with Lemma~\ref{lemma7}, we have
\begin{align*}
\alpha(t)\mu^{i}(t-1)\sum\limits_{i=1}^{N}\nabla f_{i}(x_{i}(t))
=\mu^{i}(t-1)\tilde{y}(t).
\end{align*}

\begin{lemma}\label{lemma8}
Let Assumptions~\ref{asm3}, \ref{asm1}, \ref{asm2}, and~\ref{asm4} hold. Then, for the algorithm (\ref{algorithm1_eq1}), the following inequality holds:
\begingroup
\allowdisplaybreaks
\begin{align}\label{lemma8_eq1}
\|y_{i}(t)-\mu^{i}(t-1)\tilde{y}(t)\|
\leq C_{3}\lambda_{2}^{t-1}+C_{4}\sum\limits_{s=0}^{t-1}\lambda_{2}^{t-1-s}\alpha(s),
\end{align}
\endgroup
where $C_{3}=C_{2}\sum\limits_{j=1}^{N}\|y_{j}(0)\|$ and $C_{4}=2NMC^{\prime}_{4}$ with $C^{\prime}_{4}=\max\{\frac{C_{2}}{\lambda_{2}},1\}$.
\end{lemma}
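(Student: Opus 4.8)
The plan is to track the evolution of the auxiliary variables $y_i(t)$ using the column-stochastic update rule (\ref{algorithm1_eq1c}) together with the key identity $\tilde{y}(t) = \alpha(t)\nabla\tilde{f}(t)$ from Lemma~\ref{lemma7}, and then to exploit the geometric decay of $b_{ij}(t:s) - \mu^i(t)$ from Lemma~\ref{lemma5}(a). First I would unroll the recursion for $y_i(t)$: since $y_i(t+1) = \sum_j B_{ij}(t) y_j(t) + \alpha(t+1)\nabla f_i(x_i(t+1)) - \alpha(t)\nabla f_i(x_i(t))$, iterating from the initial value $y_i(0) = \alpha(0)\nabla f_i(x_i(0))$ and using column-stochasticity to telescope the gradient-difference terms, I would obtain an expression of the form
\begin{equation*}
y_i(t) = \sum_{j=1}^N b_{ij}(t-1:0)\,y_j(0) + \alpha(t)\nabla f_i(x_i(t)) - \sum_{s=1}^{t-1}\sum_{j=1}^N \big(b_{ij}(t-1:s) - b_{ij}(t-1:s-1)\big)\alpha(s)\nabla f_j(x_j(s)),
\end{equation*}
or some equivalent rearrangement — the precise bookkeeping of indices here is the routine part. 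Simultaneously, using Lemma~\ref{lemma7} I would write $\mu^i(t-1)\tilde{y}(t) = \mu^i(t-1)\sum_{j=1}^N \alpha(t)\nabla f_j(x_j(t))$ and, more usefully, express $\mu^i(t-1)\tilde y(t)$ by the same unrolled formula with each $b_{ij}(t-1:s)$ replaced by $\mu^i(t-1)$ (which is legitimate since $\sum_j \mu^i(t-1) = N\mu^i(t-1)$ does not match; rather one uses $\mu^i$ against the conserved sum $\tilde y(s)$ at each stage, so $\mu^i(t-1)\tilde y(t) = \sum_j \mu^i(t-1) y_j(0) + \ldots$).

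Subtracting the two expressions, every term acquires a factor of the form $b_{ij}(t-1:s) - \mu^i(t-1)$, to which Lemma~\ref{lemma5}(a) applies, giving $|b_{ij}(t-1:s) - \mu^i(t-1)| \le C_2 \lambda_2^{t-1-s}$. I would then bound the initial-condition contribution by $\sum_j |b_{ij}(t-1:0) - \mu^i(t-1)|\,\|y_j(0)\| \le C_2 \lambda_2^{t-1}\sum_j \|y_j(0)\| = C_3\lambda_2^{t-1}$, which explains the constant $C_3$. For the gradient terms, using $\|\nabla f_j(x_j(s))\| \le M$ (Remark~\ref{remark3}), the number $N$ of nodes, and the factor $2$ coming from the difference $b_{ij}(t-1:s) - b_{ij}(t-1:s-1)$ being split as $(b_{ij}(t-1:s) - \mu^i) - (b_{ij}(t-1:s-1) - \mu^i)$ each bounded geometrically, I would collect a bound of the form $2NM \max\{C_2/\lambda_2, 1\}\sum_{s=0}^{t-1}\lambda_2^{t-1-s}\alpha(s)$, which is exactly $C_4 \sum_{s=0}^{t-1}\lambda_2^{t-1-s}\alpha(s)$ with $C_4 = 2NM C_4'$ and $C_4' = \max\{C_2/\lambda_2, 1\}$. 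The $C_2/\lambda_2$ appears because when $s = t-1$ the "shifted" product $b_{ij}(t-1:s-1)$ sits at exponent $t-1-(s-1) = 1$ but we want to write everything with exponent $t-1-s$, forcing a division by $\lambda_2$; when that quantity is below $1$ one just uses the trivial bound $1$ instead, hence the max.

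The main obstacle I anticipate is the careful algebraic manipulation of the telescoping sum so that the differences $b_{ij}(t-1:s) - \mu^i(t-1)$ emerge cleanly: one must be attentive to the boundary terms at $s=0$ (the initial condition) and $s = t$ (the freshly added $\alpha(t)\nabla f_i(x_i(t))$ term, which pairs with $\mu^i(t-1)$ against the full current gradient sum and must cancel appropriately against the conservation property of $\tilde y$). A secondary point requiring care is confirming that the conserved quantity is indeed $\tilde y(t) = \sum_i y_i(t)$ under the column-stochastic dynamics plus the gradient increments, which is precisely Lemma~\ref{lemma7} and should be invoked to replace $\sum_i \alpha(t)\nabla f_i(x_i(t))$-type sums by $\tilde y(t)$ at intermediate stages. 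Once the difference is written as a geometric-weighted combination of uniformly bounded vectors, the two stated bounds follow by the triangle inequality and a direct summation, with no analytic subtlety beyond the choice of constants.
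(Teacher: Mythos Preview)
Your overall strategy---unroll the recursion, subtract the $\mu^i(t-1)\tilde y(t)$ counterpart, apply Lemma~\ref{lemma5}(a), and bound the gradient terms by $M$---matches the paper's proof. However, your execution of the unrolling is more complicated than necessary, and the displayed formula you propose for $y_i(t)$ is not quite right (try $t=2$: the isolated term $-G_i(t-1)+\sum_j B_{ij}(t-1)G_j(t-1)$ with $G_j(s)=\alpha(s)\nabla f_j(x_j(s))$ survives and does not fit your template).

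The paper avoids this telescoping altogether by treating
\[
\epsilon_i(t)=\alpha(t+1)\nabla f_i(x_i(t+1))-\alpha(t)\nabla f_i(x_i(t))
\]
as an \emph{atomic} forcing term and simply unrolling $y_i(t+1)=\sum_j B_{ij}(t)y_j(t)+\epsilon_i(t)$ as a linear recursion. Since $B(t)$ is column-stochastic, $\tilde y(t)=\tilde y(0)+\sum_{s=0}^{t-1}\sum_j\epsilon_j(s)$, so the difference $y_i(t)-\mu^i(t-1)\tilde y(t)$ becomes a sum of terms $(B(t-1:s)_{ij}-\mu^i(t-1))\epsilon_j(s-1)$ plus the boundary piece $\epsilon_i(t-1)-\mu^i(t-1)\sum_j\epsilon_j(t-1)$. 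In this packaging the factor $2$ in $C_4$ comes from the crude bound $\|\epsilon_i(t)\|\le 2M\alpha(t)$ (two gradient terms, $\alpha$ nonincreasing), not from splitting $b_{ij}(t-1:s)-b_{ij}(t-1:s-1)$ as you suggest; and $C_4'=\max\{C_2/\lambda_2,1\}$ arises because the interior sum carries $\epsilon_j(s-1)$ (one index shift, forcing $C_2/\lambda_2$ after reindexing) while the boundary term at $s=t-1$ uses only $|\mu^i|\le 1$ and $|\epsilon_i|$. Your proposal would eventually reach the same bound, but the $\epsilon$-as-unit approach sidesteps the index bookkeeping you flagged as ``the routine part'' that you had not fully worked out.
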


Then, based on Lemmas~\ref{lemma7} and~\ref{lemma8}, we give the following corollary where a bound is introduced for the variable $y_{i}(t)$.

\begin{corollary}\label{cor1}
Let Assumptions~\ref{asm3}, \ref{asm1}, \ref{asm2}, and~\ref{asm4} hold. Then, for the algorithm (\ref{algorithm1_eq1}), the following inequality holds:
\begin{align}\label{cor_eq1}
\|y_{i}(t)\|\leq C_{3}\lambda_{2}^{t-1}+C_{4}\sum\limits_{s=0}^{t-1}\lambda_{2}^{t-1-s}\alpha(s)+NM\alpha(t).
\end{align}
Furthermore, there holds $\lim\limits_{t\to\infty}\|y_{i}(t)\|=0$.
\end{corollary}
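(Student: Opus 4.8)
The plan is to derive the bound (\ref{cor_eq1}) directly from Lemma~\ref{lemma8} together with Lemma~\ref{lemma7} and the triangle inequality, and then obtain the stated limit via Lemma~\ref{lemma2}(a). First I would split, by the triangle inequality,
\begin{equation*}
\|y_{i}(t)\|\leq\|y_{i}(t)-\mu^{i}(t-1)\tilde{y}(t)\|+\|\mu^{i}(t-1)\tilde{y}(t)\|.
\end{equation*}
The first term on the right is exactly the quantity estimated in Lemma~\ref{lemma8}, namely it is at most $C_{3}\lambda_{2}^{t-1}+C_{4}\sum_{s=0}^{t-1}\lambda_{2}^{t-1-s}\alpha(s)$, so it only remains to bound the second term by $NM\alpha(t)$.

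For that bound I would use two facts. Since $\{\mu(t)\}$ is a sequence of probability vectors by Lemma~\ref{lemma5} (each entry positive and $\mu^{T}(t)\mathbf{1}=1$), we have $0<\mu^{i}(t-1)\leq1$. And by Lemma~\ref{lemma7}, $\tilde{y}(t)=\alpha(t)\nabla\tilde{f}(t)=\alpha(t)\sum_{i=1}^{N}\nabla f_{i}(x_{i}(t))$, so the uniform gradient bound $\|\nabla f_{i}(x_{i}(t))\|\leq M$ from Remark~\ref{remark3} gives $\|\tilde{y}(t)\|\leq NM\alpha(t)$. Hence $\|\mu^{i}(t-1)\tilde{y}(t)\|\leq NM\alpha(t)$, and combining with the previous display yields (\ref{cor_eq1}).

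For the limit, note that $\sum_{t=0}^{\infty}\alpha^{2}(t)<\infty$ forces $\alpha(t)\to0$. Then $C_{3}\lambda_{2}^{t-1}\to0$ because $0<\lambda_{2}<1$, $NM\alpha(t)\to0$, and applying Lemma~\ref{lemma2}(a) with $\theta(k)=\alpha(k)$, limit $\theta=0$ and $\rho=\lambda_{2}$ (taking $k=t-1$ to match the index of the sum) gives $\lim_{t\to\infty}\sum_{s=0}^{t-1}\lambda_{2}^{t-1-s}\alpha(s)=0$. Therefore the right-hand side of (\ref{cor_eq1}) tends to $0$, and so does $\|y_{i}(t)\|$. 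The argument is entirely routine; the only points needing care are the bookkeeping of the index shift $t-1$ when invoking Lemma~\ref{lemma2}(a), the observation $\mu^{i}(t-1)\leq1$, and making sure the constants are precisely those delivered by Lemma~\ref{lemma8}. There is no real obstacle beyond assembling these already-established pieces.
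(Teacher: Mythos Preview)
Your proposal is correct and follows essentially the same route as the paper: the paper also splits $\|y_i(t)\|\leq\|y_i(t)-\mu^i(t-1)\tilde y(t)\|+\mu^i(t-1)\|\tilde y(t)\|$, uses Lemma~\ref{lemma7} together with $\mu^i(t-1)<1$ to bound the second term by $NM\alpha(t)$, invokes Lemma~\ref{lemma8} for the first, and then appeals to Lemma~\ref{lemma2}(a) for the limit. Your handling of the index shift and the constants matches the paper's argument exactly.
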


\begin{remark}\label{remark6}
From the result in Corollary~\ref{cor1} and the iteration rule (\ref{algorithm1_eq1a}), it is clearly seen that $v_{i}(t)$ is uniformly bounded with respect to $t$. Thus, under Assumption~\ref{asm1}, $g_{i}(v_{i}(t))$ and $\nabla g_{i}(v_{i}(t))$ are also uniformly bounded. Let
\begin{equation*}
G_{i}=\{x\mid\nabla g_{i}(x)=0\},\quad G_{i0}=\{x\mid g_{i}(x)<0\}.
\end{equation*}
Since $g_{i}(x)$ is convex, $G_{i}$ is a closed and convex set and $G_{i0}$ is an open set. 
Moreover, under Assumption~\ref{asm3}, there holds $\min\limits_{x}g_{i}(x)<0$ for all $i=1,2,\cdots,N$, which implies that $G_{i0}$ contains $G_{i}$ strictly. Then, noting that $\nabla g_{i}(x)$ is continuous, we can conclude that the set $\{\|\nabla g_{i}(x)\|\mid x\notin G_{i0}\}$ has a strictly positive infimum, which means that there exists a positive constant $\underline{M}\leq \|d_{0}\|$ such that $\|\nabla g_{i}(x)\|\geq\underline{M}$ holds for all $x\notin G_{i0}$, i.e., for all $t\geq0$, there holds $\|d_{i}(t)\|\geq\underline{M}$. Taking into account that $g_{i}(x_{i}(t))$ and $\nabla g_{i}(x_{i}(t))$ are also uniformly bounded, we have that $d_{i}(t)$ and $k_{i}(t)$ are also uniformly bounded. Hence, for convenience of the subsequent analysis, we can assume that for all $i=1,2,\cdots,N$,
\begin{equation*}
\max\big\{\|g_{i}(v_{i}(t))\|,\|\nabla g_{i}(v_{i}(t))\|,\|d_{i}(t)\|,\|k_{i}(t)\|\big\}\leq M.
\end{equation*}
\end{remark}

Next, to further characterize the error term $\|y_{i}(t)-\mu^{i}(t-1)\tilde{y}(t)\|$, we introduce another two new variables:
\begin{equation*}
w_{1}(t)=\sum\limits_{i=1}^{N}\|\phi_{i}(t)\|,\quad w_{2}(t)=\sum\limits_{i=1}^{N}g_{i}^{+}(z_{i}(t)),
\end{equation*}
with
$\phi_{i}(t)=x_{i}(t+1)-(v_{i}(t)-\beta k_{i}(t))$. 
Besides, define
\begin{align*}
\gamma_{i}(t)=\alpha(t)\sum\limits_{s=0}^{t-1}\lambda_{1}^{t-1-s}w_{i}(s),\ i=1,2,
\end{align*}
with $\gamma_{1}(0)=\gamma_{2}(0)=0$. In the following lemma, we will deduce a common bound for all the terms $\alpha(t)\|x_{i}(t)-\bar{x}(t)\|$.

\begin{lemma}\label{lemma9}
Let Assumptions~\ref{asm3}, \ref{asm1}, \ref{asm2}, and \ref{asm4} hold. Then, for the algorithm (\ref{algorithm1_eq1}) and all $i=1,2,\cdots,N$, there holds
\begingroup
\allowdisplaybreaks
\begin{align}\label{lemma9_eq1}
\alpha(t)\|x_{i}(t)-\bar{x}(t)\|
&\leq C_{6}\gamma_{1}(t)+C_{7}\gamma_{2}(t)+\xi_{1}(t),
\end{align}
\endgroup%
where $C_{6}=2N^{2}C_{1}$, $C_{7}=2N^{2}C_{1}\frac{\beta}{\underline{M}}$, and
\begin{align*}
\xi_{1}(t)&=C_{8}\alpha(t)\sum\limits_{s=1}^{t-1}\lambda_{1}^{t-1-s}\lambda_{2}^{s-1}+C_{10}\alpha(t)\sum\limits_{s=1}^{t-1}\lambda_{1}^{t-1-s}\alpha(s)\\
      &\veq +C_{5}\alpha(t)\lambda_{1}^{t-1}+C_{9}\alpha(t)\sum\limits_{s=1}^{t-1}\lambda_{1}^{t-1-s}\sum\limits_{r=0}^{s-1}\lambda_{2}^{s-1-r}\alpha(r),
\end{align*}
with
\begin{align*}
C_{5}&=NC_{1}\sum\limits_{j=1}^{N}\|x_{j}(0)-\bar{x}(0)\|+2N^{2}C_{1}(w_{1}(0)+\sum\limits_{j=1}^{N}y_{j}(0)+\beta\sum\limits_{j=1}^{N}\|k_{j}(0)\|),\\
C_{8}&=2N^{2}C_{1}C_{3}\Big(1+\frac{\beta M}{\underline{M}}\Big),\
C_{9}=2N^{2}C_{1}C_{4}\Big(1+\frac{\beta M}{\underline{M}}\Big),C_{10}=2N^{3}MC_{1}\Big(1+\frac{\beta M}{\underline{M}}\Big).
\end{align*}
\end{lemma}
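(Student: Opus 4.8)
The plan is to unroll the state iteration and express $x_i(t)-\bar x(t)$ as a transition product of the row stochastic matrices $A(\cdot)$ acting on the initial data plus an accumulation of the per-step perturbations $\phi_i(\cdot)$ and the gradient corrections $y_i(\cdot)$, and then bound each piece using Lemma~\ref{lemma4}. First I would rewrite \eqref{algorithm1_eq1b} as $x_i(t+1)=\sum_{j=1}^N A_{ij}(t)x_j(t)-y_i(t)-\beta k_i(t)+\phi_i(t)$, so that the nonlinear projection is absorbed into the bounded term $\phi_i(t)$ (recall $\|\phi_i(t)\|\le \|\phi_i(t)\|$ is what enters $w_1$, and $k_i(t)$ is bounded by $M$, $\|d_i(t)\|\ge\underline M$, as established in Remark~\ref{remark6}). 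Iterating from time $0$ gives
\begin{equation*}
x_i(t)=\sum_{j=1}^N a_{ij}(t-1:0)x_j(0)-\sum_{s=0}^{t-1}\sum_{j=1}^N a_{ij}(t-1:s+1)\big(y_j(s)+\beta k_j(s)-\phi_j(s)\big),
\end{equation*}
with the convention $a_{ij}(t-1:t)=\delta_{ij}$ for the $s=t-1$ term. The same formula holds for $\bar x(t)=\sum_i\pi^i(t)x_i(t)$ after left-multiplying by $\pi^T(t)$ and using property (c) of Lemma~\ref{lemma4}, namely $\pi^T(t)A(t-1:s)=\pi^T(s)$ (more precisely $\pi^T(s-1)$-type telescoping), so that the coefficient of $x_j(0)$ becomes $\pi^j(0)$ and the coefficient of the $s$-th perturbation becomes $\pi^j(s+1)$. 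Subtracting, the $x_j(0)$ coefficient is $a_{ij}(t-1:0)-\pi^j(0)$ and the $s$-th perturbation coefficient is $a_{ij}(t-1:s+1)-\pi^j(s+1)$, each of which is at most $C_1\lambda_1^{t-1-s}$ (or $C_1\lambda_1^{t-1}$ for the initial term) by part (a) of Lemma~\ref{lemma4}.

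Next I would take norms, multiply through by $\alpha(t)$, and split the resulting sum into three groups according to the three perturbation types: the $\phi_j(s)$ terms contribute $\alpha(t)\sum_{s}\lambda_1^{t-1-s}\sum_j\|\phi_j(s)\|=\alpha(t)\sum_s\lambda_1^{t-1-s}w_1(s)$, which is exactly $\gamma_1(t)$ up to the constant $C_1$ (and an extra factor $N$ from the double sum over $i,j$, yielding $C_6=2N^2C_1$ after also accounting for the factor-of-2 slack one typically keeps; the $\beta k_j(s)$ terms are where $w_2$ enters, since $k_j(s)=\frac{g_j^+(v_j(s))}{\|d_j(s)\|^2}d_j(s)$ has norm $\le \frac{g_j^+(v_j(s))}{\underline M}$, and one must relate $g_j^+(v_j(s))$ to $g_j^+(z_j(s))=w_2$-summand — here $v_j(s)=z_j(s)-y_j(s)$, so by convexity/Lipschitz-type control of $g_j$ and the bound on $\|y_j(s)\|$ from Corollary~\ref{cor1}, $g_j^+(v_j(s))\le g_j^+(z_j(s))+M\|y_j(s)\|$, splitting this term into a $\gamma_2(t)$ piece (coefficient $C_7=2N^2C_1\frac{\beta}{\underline M}$) and extra terms feeding into $\xi_1(t)$; and the $y_j(s)$ terms, bounded via Corollary~\ref{cor1}'s estimate $\|y_j(s)\|\le C_3\lambda_2^{s-1}+C_4\sum_r\lambda_2^{s-1-r}\alpha(r)+NM\alpha(s)$, produce precisely the $C_8,C_9,C_{10}$ convolution terms in $\xi_1(t)$, while the $x_j(0)$ term produces the $C_5\alpha(t)\lambda_1^{t-1}$ term (with $C_5$ also collecting the $s=0$ contributions of $w_1(0)$, $y_j(0)$, $k_j(0)$ since the recursion for $w_i$ starts the convolution at $s=1$).

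The main obstacle I expect is the bookkeeping in the middle step: correctly handling the $\beta k_j(s)$ contribution, where one has to pass from $g_j^+(v_j(s))$ (the quantity actually appearing in $k_j$) to $g_j^+(z_j(s))$ (the quantity defining $w_2$) and absorb the mismatch $\|y_j(s)\|$ into the already-present $y_j$-driven terms without generating new term types — this is what forces $\xi_1(t)$ to have exactly the four stated summands and fixes the constants $C_8,\dots,C_{10}$ to include the factor $\big(1+\frac{\beta M}{\underline M}\big)$. A secondary technical point is making the telescoping identity $\pi^T(s)=\pi^T(s+1)A(s)$ line up the coefficients correctly so that $x_i(t)-\bar x(t)$ really has coefficients of the form $a_{ij}(\cdot)-\pi^j(\cdot)$ rather than something messier; once that is in place, part (a) of Lemma~\ref{lemma4} does all the geometric decay work and the rest is collecting constants. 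Everything else — taking norms, using $\|k_j\|,\|\nabla g_j\|\le M$, $\|d_j\|\ge\underline M$ from Remark~\ref{remark6}, and the definitions of $\gamma_1,\gamma_2$ — is routine.
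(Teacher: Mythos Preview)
Your proposal is correct and follows essentially the same approach as the paper: rewrite the update as $x_i(t+1)=\sum_j A_{ij}(t)x_j(t)+u_i(t)$ with $u_i(t)=\phi_i(t)-y_i(t)-\beta k_i(t)$, unroll to get coefficients of the form $a_{ij}(t-1{:}s)-\pi^j(s)$ controlled by $C_1\lambda_1^{t-1-s}$ via Lemma~\ref{lemma4}, bound $\|k_i(t)\|\le\frac{1}{\underline M}g_i^+(z_i(t))+\frac{M}{\underline M}\|y_i(t)\|$ to pass from $v_i$ to $z_i$, and insert Corollary~\ref{cor1} for $\|y_j(s)\|$ to generate the $C_8,C_9,C_{10}$ terms with the $(1+\beta M/\underline M)$ factor. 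The only cosmetic difference is that the paper invokes \cite[Lemma~6]{Li2019TAC} directly for the consensus bound (giving the form $\sum_j NC_1\lambda_1^{t-1-s}\|u_j(s)-\hat u(s)\|$ with $\hat u(s)=\sum_j\pi^j(s+1)u_j(s)$) and then uses $\|u_j(s)-\hat u(s)\|\le 2\sum_{j'}\|u_{j'}(s)\|$ to get the factor $2N^2C_1$, whereas you derive the same thing by hand; your identification of the ``factor-of-2 slack'' is exactly this step.
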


Based on Lemma~\ref{lemma9}, we will provide a more concrete bound of the term $\|y_{i}(t)-\mu^{i}(t-1)\tilde{y}(t)\|$ in the next lemma. To this end, we need to introduce several intermediate variables:
\begin{align*}
\eta_{i}(t)=\sum\limits_{s=0}^{t-1}\lambda_{2}^{t-1-s}\gamma_{i}(s),\ i=1,2,\ \eta_{3}(t)=\sum\limits_{s=0}^{t-1}\lambda_{2}^{t-1-s}\alpha(s)w_{2}(s),
\end{align*}
with $\eta_{1}(0)=\eta_{2}(0)=\eta_{3}(0)=0$.

\begin{lemma}\label{lemma10}
Let Assumptions~\ref{asm3}, \ref{asm1}, \ref{asm2}, and~\ref{asm4} hold. Then, for the algorithm (\ref{algorithm1_eq1}) and all $i=1,2,\cdots,N$, the following inequality holds:
\begingroup
\allowdisplaybreaks
\begin{align}\label{lemma10_eq1}
&\veq\|y_{i}(t)-\mu^{i}(t-1)\tilde{y}(t)\|\notag\\
&\leq C_{3}\lambda_{2}^{t-1}+C_{17}\sum\limits_{s=0}^{t-1}\lambda_{2}^{t-1-s}\xi_{2}(s)
+C_{18}\eta_{1}(t)+C_{19}\eta_{2}(t)+C_{20}\eta_{3}(t),
\end{align}
\endgroup
where
\begingroup
\allowdisplaybreaks
\begin{align*}
\xi_{2}(t)
&=M(\alpha(t)\!-\!\alpha(t+1))\!+\!L(1\!+\!N)\xi_{1}(t)\!+\!C_{13}\alpha(t)\lambda_{2}^{t-1}\notag\\
&\veq+C_{14}\alpha(t)\sum\limits_{s=0}^{t-1}\lambda_{2}^{t-1-s}\alpha(s)+C_{15}\alpha^{2}(t),
\end{align*}
\endgroup
with $C_{11}=L(1+N)C_{6}$, $C_{12}=L(1+N)C_{7}$,
\begin{align*}
C_{13}&=L\Big(1+\frac{\beta M}{\underline{M}}\Big)C_{3},\
C_{14}=L\Big(1+\frac{\beta M}{\underline{M}}\Big)C_{4},\
C_{15}=LNM\Big(1+\frac{\beta M}{\underline{M}}\Big),\\
C_{16}&=\frac{\beta L}{\underline{M}},\
C_{17}=NC^{\prime}_{4},\
C_{18}=NC_{11}C^{\prime}_{4},\ C_{19}=NC_{12}C^{\prime}_{4},\ C_{20}=NC_{16}C^{\prime}_{4}.
\end{align*}
\end{lemma}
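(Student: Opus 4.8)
To prove Lemma~\ref{lemma10}, the idea is to refine the crude bound of Lemma~\ref{lemma8} by unpacking the term that drives the tracking error, namely the gradient increment $\alpha(t+1)\nabla f_i(x_i(t+1)) - \alpha(t)\nabla f_i(x_i(t))$ which appears in the $y$-update~(\ref{algorithm1_eq1c}). The starting point is the push-sum style identity: since $\{B(t)\}$ is column stochastic with $B_{ii}(t)\geq b_0$, Lemma~\ref{lemma5} gives $|b_{ij}(t-1:s) - \mu^i(t-1)|\leq C_2\lambda_2^{t-1-s}$. Writing $y_i(t)$ via the telescoping sum of its own recursion and using Lemma~\ref{lemma7} ($\tilde y(t)=\alpha(t)\nabla\tilde f(t)$, so $\mu^i(t-1)\tilde y(t)$ is exactly the target), one obtains, much as in the proof of Lemma~\ref{lemma8},
\begin{equation*}
\|y_i(t)-\mu^i(t-1)\tilde y(t)\| \leq C_3\lambda_2^{t-1} + \sum_{s=0}^{t-1}\bigl(\text{coefficient bounded by }C_2\lambda_2^{t-1-s}\text{ or }1\bigr)\cdot\|r_i(s)\|,
\end{equation*}
where $r_i(s)$ is the "innovation'' $\alpha(s+1)\nabla f_i(x_i(s+1))-\alpha(s)\nabla f_i(x_i(s))$ (or the analogous summand). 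The constant $C_4$ in Lemma~\ref{lemma8} came from the crude estimate $\|r_i(s)\|\le 2NM\,\max\{C_2/\lambda_2,1\}$; the point now is to keep $\|r_i(s)\|$ and split it sharply.

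The second step is to estimate $\|r_i(s)\|$ itself. Add and subtract $\alpha(s+1)\nabla f_i(x_i(s))$:
\begin{equation*}
\|r_i(s)\| \leq M\,(\alpha(s)-\alpha(s+1)) + \alpha(s+1)\,\|\nabla f_i(x_i(s+1))-\nabla f_i(x_i(s))\| \leq M\,(\alpha(s)-\alpha(s+1)) + L\,\alpha(s)\,\|x_i(s+1)-x_i(s)\|,
\end{equation*}
using $L$-smoothness (Assumption~\ref{asm1}) and $\|\nabla f_i\|\le M$ (Remark~\ref{remark3}); here $\alpha(s+1)\le\alpha(s)$ since the step-size is decaying. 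Now $\|x_i(s+1)-x_i(s)\|$ must be controlled: from~(\ref{algorithm1_eq1b}) and the nonexpansiveness of $P_{X_i}$ (Lemma~\ref{lemma1}(a)), $x_i(s+1)-x_i(s) = \phi_i(s) + (v_i(s)-\beta k_i(s)) - x_i(s)$, and one bounds $v_i(s)-x_i(s) = \sum_j A_{ij}(s)x_j(s) - y_i(s) - x_i(s)$. The consensus-contraction piece $\|\sum_j A_{ij}(s)x_j(s)-x_i(s)\|\le\sum_j\|x_j(s)-\bar x(s)\|+\|x_i(s)-\bar x(s)\|$ is exactly where Lemma~\ref{lemma9} enters, contributing the $\gamma_1,\gamma_2$ terms; the $\|y_i(s)\|$ piece is controlled by Corollary~\ref{cor1}; $\|k_i(s)\|\le M$ by Remark~\ref{remark6}; and $\|\phi_i(s)\|\le w_1(s)$ by definition. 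Assembling these gives precisely the displayed form of $\xi_2(s)$: the $M(\alpha(s)-\alpha(s+1))$ term, the $L(1+N)\xi_1(s)$ term coming from the $\xi_1$ part of Lemma~\ref{lemma9}'s bound, the $C_{13}\alpha(s)\lambda_2^{s-1}$ and $C_{14}\alpha(s)\sum\lambda_2^{\cdot}\alpha(\cdot)$ terms coming from Corollary~\ref{cor1}'s bound on $\|y_i(s)\|$, and the $C_{15}\alpha^2(s)$ term. The $\gamma_1(s),\gamma_2(s)$ parts of Lemma~\ref{lemma9} get pushed through the outer sum $\sum_s\lambda_2^{t-1-s}(\cdot)$, producing $\eta_1(t),\eta_2(t)$ with constants $C_{18},C_{19}$; the $w_2$-type contribution (from $\|\phi_i\|$, i.e.\ $w_1$, together with the part of Lemma~\ref{lemma9} that feeds back through $\gamma_2$, or from a separate $g_i^+(z_i(s))$ bound) yields $\eta_3(t)$ with $C_{20}$. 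Finally collect the factor $C_{17}=NC'_4$ in front of $\sum_s\lambda_2^{t-1-s}\xi_2(s)$.

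\textbf{Main obstacle.} The bookkeeping is the real work: one must carefully track which summands get hit by the geometric factor $\lambda_2^{t-1-s}$ from Lemma~\ref{lemma5}(a) versus which get only a bounded coefficient, and then correctly interchange/telescope the nested sums so that the $\gamma_i(s)$ appearing inside $\xi_1$ and inside Lemma~\ref{lemma9}'s bound reorganize into the clean $\eta_i(t)=\sum_s\lambda_2^{t-1-s}\gamma_i(s)$ and $\eta_3(t)$. Getting the constants $C_{11}$ through $C_{20}$ to come out exactly as stated (in particular the factors $L(1+N)$, $\beta/\underline M$, and $C'_4=\max\{C_2/\lambda_2,1\}$) requires being scrupulous about where the $+1$ from "node $i$ plus its neighbours'' and where the lower bound $\|d_i(t)\|\ge\underline M$ (Remark~\ref{remark6}) are used. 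No single estimate is deep — nonexpansiveness, $L$-smoothness, the uniform bound $M$, and the two geometric mixing lemmas are all that is needed — but the chain of substitutions is long, so the plan is to introduce the intermediate quantity $\|x_i(s+1)-x_i(s)\|$ explicitly, bound it once and for all using Lemmas~\ref{lemma1},~\ref{lemma9}, Corollary~\ref{cor1} and Remark~\ref{remark6}, and only then feed the result into the telescoped $y$-tracking inequality.
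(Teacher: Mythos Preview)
Your overall architecture is right and matches the paper: start from the telescoped tracking inequality in the proof of Lemma~\ref{lemma8}, bound the innovation $\epsilon_i(s)=\alpha(s+1)\nabla f_i(x_i(s+1))-\alpha(s)\nabla f_i(x_i(s))$ via $L$-smoothness and $\|x_i(s+1)-x_i(s)\|$, then feed in Lemma~\ref{lemma9} and Corollary~\ref{cor1}. However, two of your specific estimates would not deliver the stated $\xi_2$ and $\eta_3$. First, the crude bound $\|k_i(s)\|\le M$ from Remark~\ref{remark6} leaves a bare $L\beta M\,\alpha(s)$ inside $\|\epsilon_i(s)\|$, i.e.\ a non-summable $\alpha(s)$ contribution to $\xi_2(s)$ that is absent from the lemma and would break the later summability argument in Theorem~\ref{theorem1}. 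The paper instead uses the finer estimate derived in the proof of Lemma~\ref{lemma9},
\[
\|k_i(s)\|\le \tfrac{1}{\underline M}\,g_i^{+}(z_i(s))+\tfrac{M}{\underline M}\,\|y_i(s)\|,
\]
which routes the $g_i^{+}$ piece into $C_{16}\alpha(s)w_2(s)$ (this is exactly what produces $\eta_3(t)$ after the outer $\lambda_2^{t-1-s}$ sum) and folds the $\|y_i(s)\|$ piece back through Corollary~\ref{cor1}, giving the factor $(1+\beta M/\underline M)$ seen in $C_{13},C_{14},C_{15}$. Your aside ``or from a separate $g_i^+(z_i(s))$ bound'' is the correct instinct; it is not optional.

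Second, you retain $\phi_i(s)$ when bounding $\|x_i(s+1)-x_i(s)\|$. The paper drops it: since $x_i(s)\in X_i$, nonexpansiveness gives $\|x_i(s+1)-x_i(s)\|=\|P_{X_i}(v_i(s)-\beta k_i(s))-P_{X_i}(x_i(s))\|\le\|v_i(s)-\beta k_i(s)-x_i(s)\|$ with no $\phi_i$ term at all. Keeping $\phi_i$ would inject an $\alpha(s)w_1(s)$-type term that has no home in the lemma as stated. This is also why your description of the origin of $\eta_3$ is muddled: $\eta_3$ is built from $w_2$, not $w_1$, and it enters solely through the finer $k_i$ estimate above, not through $\phi_i$ or through the $\gamma_2$ part of Lemma~\ref{lemma9}.
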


Now, we are prepared to examine the evolution of all states $x_{i}(t)$ in the below lemma.

\begin{lemma}\label{lemma11}
Under Assumptions~\ref{asm3}, \ref{asm1}, \ref{asm2}, and~\ref{asm4}, for all $v\in X^{*}$ and $t\geq0$, there holds
\begingroup
\allowdisplaybreaks
\begin{align}\label{lemma11_eq1}
&\veq\sum\limits_{i=1}^{N}\pi^{i}(t+1)\|x_{i}(t+1)-v\|^{2}\notag\\
&\leq\sum\limits_{i=1}^{N}\pi^{i}(t)\|x_{i}(t)-v\|^{2}-2\theta_{2}\alpha(t)(f(s(t))-f(v))+\xi_{3}(t)+D_{4}\gamma_{1}(t)+D_{5}\gamma_{2}(t)\notag\\
&\veq-\sum\limits_{i=1}^{N}\frac{\theta_{1}\beta(2-\beta)}{2 M^{2}}(g_{i}^{+}(z_{i}(t)))^{2}
+D_{6}\eta_{1}(t)+D_{7}\eta_{2}(t)+D_{8}\eta_{3}(t)-\sum\limits_{i=1}^{N}\theta_{1}\|\phi_{i}(t)\|^{2},
\end{align}
\endgroup
where $D_{4}=2C_{6}D_{1}N$, $D_{5}=2C_{7}D_{1}N$, $D_{6}=C_{18}D_{3}N$, $D_{7}=C_{19}D_{3}N$, $D_{8}=C_{20}D_{3}N$, and
\begingroup
\allowdisplaybreaks
\begin{align*}
\xi_{3}(t)
=2D_{1}N\xi_{1}(t)+D_{2}\alpha^{2}(t)+C_{3}D_{3}N\lambda_{2}^{t-1}+C_{17}D_{3}N\sum\limits_{s=0}^{t-1}\lambda_{2}^{t-1-s}\xi_{2}(s),
\end{align*}
\endgroup
with $D_{1}=MN+LM+(MN+1)MNR$, $D_{3}=2M+8M\beta(2-\beta)$, and
\begin{align*}
D_{2}=\frac{4M^{4}N^{3}R^{2}}{\theta_{1}\beta(2-\beta)}+2M^{2}N^{2}+8M^{2}N^{2}\beta(2-\beta).
\end{align*}
\end{lemma}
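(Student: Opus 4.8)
The plan is to track the weighted squared distance $\sum_{i=1}^{N}\pi^{i}(t)\|x_{i}(t)-v\|^{2}$ along the iteration and to peel off, step by step, the error terms coming from (i) the mismatch between $y_{i}(t)$ and the ideal gradient surrogate $\mu^{i}(t-1)\tilde{y}(t)$, (ii) the consensus error $\|x_{i}(t)-\bar{x}(t)\|$, and (iii) the projection/inequality-constraint correction $\beta k_{i}(t)$. First I would use property (c) of Lemma~\ref{lemma4}, namely $\pi^{T}(t+1)=\pi^{T}(t+1)A(t)$ applied through $z_i(t)=\sum_j A_{ij}(t)x_j(t)$, so that $\sum_{i}\pi^{i}(t+1)\|z_{i}(t)-v\|^{2}$ is controlled by $\sum_{i}\pi^{i}(t)\|x_{i}(t)-v\|^{2}$ up to a convexity-of-norm term that actually produces the negative $-\sum_i\theta_1\|\phi_i(t)\|^2$-type slack (using $\pi^{i}(t)\ge\theta_1$ from Lemma~\ref{lemma4}(b) and $\pi^{i}(t+1)\ge\theta_1$). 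Then I would expand $\|x_i(t+1)-v\|^2 = \|P_{X_i}(v_i(t)-\beta k_i(t))-v\|^2$ and apply Lemma~\ref{lemma6} (with $Y=X_i$, the point $v_i(t)$ in place of the generic $v$, and a shadow point $\tilde v\in X_0$ satisfying $g_i^{+}(\tilde v)=0$, $\tilde v\in X_i$): this yields $\|x_i(t+1)-v\|^2\le\|v_i(t)-v\|^2-\beta(2-\beta)(g_i^{+}(v_i(t)))^2/\|d_i(t)\|^2$ plus a cross term from replacing $v$ by $\tilde v$, which is then absorbed using Proposition~\ref{prop2} (constraint regularity) and the compactness bounds of Remark~\ref{remark6}. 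The $\|d_i(t)\|\le M$ bound converts the negative term into $-\sum_i \theta_1\beta(2-\beta)/(2M^2)\,(g_i^{+}(z_i(t)))^2$ once $g_i^{+}(v_i(t))$ is compared with $g_i^{+}(z_i(t))$ via Lipschitz continuity of $g_i^+$ and the bound $\|v_i(t)-z_i(t)\|=\|y_i(t)\|$ from \eqref{algorithm1_eq1a}.

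Next I would expand $\|v_i(t)-v\|^2=\|z_i(t)-y_i(t)-v\|^2=\|z_i(t)-v\|^2-2\langle y_i(t),z_i(t)-v\rangle+\|y_i(t)\|^2$. The middle inner product is the key place where the gradient-descent decrease $-2\theta_2\alpha(t)(f(s(t))-f(v))$ is created: I would write $y_i(t)=\mu^{i}(t-1)\tilde y(t)+(y_i(t)-\mu^{i}(t-1)\tilde y(t))$, use Lemma~\ref{lemma7} so that $\mu^{i}(t-1)\tilde y(t)=\alpha(t)\mu^{i}(t-1)\sum_j\nabla f_j(x_j(t))$, then sum against $\pi^{i}(t+1)$ and use convexity of each $f_j$ together with $s(t)=P_{X_0}(\bar x(t))$, the bound $\mu^i(t-1)\ge\theta_2$ (Lemma~\ref{lemma5}(b)), $L$-smoothness (Assumption~\ref{asm1}), and the consensus bound $\alpha(t)\|x_i(t)-\bar x(t)\|$ from Lemma~\ref{lemma9}; the remaining contribution of $y_i(t)-\mu^i(t-1)\tilde y(t)$ is bounded by Lemma~\ref{lemma10}. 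Summing all of this against $\pi^i(t+1)$ and collecting terms: the $\gamma_1,\gamma_2$ terms (consensus errors) enter with coefficients $D_4=2C_6D_1N$, $D_5=2C_7D_1N$; the $\eta_1,\eta_2,\eta_3$ terms (propagated gradient-tracking errors, via Lemma~\ref{lemma10}) enter with $D_6,D_7,D_8$; and the residual deterministic error $\xi_3(t)$ lumps the $\alpha^2(t)$ term (with $D_2$ carrying a $1/(\theta_1\beta(2-\beta))$ factor from Young's inequality used to split a cross term), the $\lambda_2^{t-1}$ term, the $\sum_s\lambda_2^{t-1-s}\xi_2(s)$ term, and the $\xi_1(t)$ term from Lemma~\ref{lemma9}, with $D_1=MN+LM+(MN+1)MNR$ and $D_3=2M+8M\beta(2-\beta)$ emerging from these combined estimates.

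The main obstacle, and the part requiring the most care, is the bookkeeping of the cross terms so that every error is expressed \emph{only} through the already-analyzed nonnegative quantities $\gamma_i(t),\eta_i(t),\xi_1(t),\xi_2(t)$ and $\alpha^2(t),\lambda_2^{t-1}$ — in particular, ensuring the constants $D_1,\ldots,D_8$ can be chosen uniformly (independent of $t$ and $v$). Two delicate points: first, the replacement of $v_i(t)$ by $z_i(t)$ inside $g_i^+$ must not destroy the sign of the projection slack, which forces the factor $\tfrac12$ in $\tfrac{\theta_1\beta(2-\beta)}{2M^2}$ — one spends half of the decrease to absorb the perturbation $\|y_i(t)\|$ (controlled by Corollary~\ref{cor1}) and the $\phi_i(t)$ correction; second, the term produced by Proposition~\ref{prop2} couples $\mathrm{dist}(z_i(t),X_0)$ to $\max_i g_i^+(z_i(t))$ and $\max_i\mathrm{dist}(z_i(t),X_i)$, and one must bound $\mathrm{dist}(z_i(t),X_i)$ by $\|\phi_i(t)\|+\beta\|k_i(t)\|+\|y_i(t)\|$ so that it feeds back into $w_1(t),w_2(t)$ and hence into $\gamma_1,\gamma_2$ rather than creating a new untracked quantity. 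Once these couplings are closed consistently, the stated inequality \eqref{lemma11_eq1} follows by grouping terms; the routine Cauchy–Schwarz and Young's-inequality estimates are then mechanical.
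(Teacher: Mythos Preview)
Your high-level strategy matches the paper's: bound $\|x_i(t+1)-v\|^2$ by the projection/Polyak inequalities, expand $\|v_i(t)-v\|^2=\|z_i(t)-v\|^2-2\langle y_i(t),z_i(t)-v\rangle+\|y_i(t)\|^2$, split $y_i(t)=\mu^i(t-1)\tilde y(t)+(y_i(t)-\mu^i(t-1)\tilde y(t))$, and then feed in Lemmas~\ref{lemma9} and~\ref{lemma10} before weighting by $\pi^i(t+1)$ and invoking Lemma~\ref{lemma4}(c). However, three specific pieces of your plan are misrouted and would not close as written.

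\textbf{(i) Origin of $-\|\phi_i(t)\|^2$.} This term does \emph{not} come from the averaging step. Convexity of $\|\cdot\|^2$ together with Lemma~\ref{lemma4}(c) gives exactly $\sum_i\pi^i(t+1)\|z_i(t)-v\|^2\le\sum_i\pi^i(t)\|x_i(t)-v\|^2$ with no slack whatsoever. The $-\|\phi_i(t)\|^2$ comes from Lemma~\ref{lemma1}(b) applied to the projection onto $X_i$: with $x_1=v_i(t)-\beta k_i(t)$, $x_3=v\in X_i$, one has $\|x_i(t+1)-v\|^2\le\|v_i(t)-\beta k_i(t)-v\|^2-\|\phi_i(t)\|^2$, since $\phi_i(t)=x_i(t+1)-(v_i(t)-\beta k_i(t))$. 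The Polyak step (the content of Lemma~\ref{lemma6} before projecting) then gives $\|v_i(t)-\beta k_i(t)-v\|^2\le\|v_i(t)-v\|^2-\beta(2-\beta)(g_i^+(v_i(t)))^2/\|d_i(t)\|^2$.

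\textbf{(ii) No shadow point is needed.} Since $v\in X^*\subset X_0$, one already has $v\in X_i$ and $g_i^+(v)=0$ for every $i$; Lemma~\ref{lemma6} applies directly with $z=v$. There is no cross term to absorb and no $\tilde v$ to introduce. Your detour here creates spurious error terms.

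\textbf{(iii) Where Proposition~\ref{prop2} enters.} The paper applies constraint regularity at $\bar x(t)$, not at $z_i(t)$: one needs $\|s(t)-\bar x(t)\|=\mathrm{dist}(\bar x(t),X_0)$ because the convexity step produces $f(v)-f(\bar x(t))$, which is upgraded to $f(v)-f(s(t))$ at the cost of $MN\|s(t)-\bar x(t)\|$. Then $\mathrm{dist}(\bar x(t),X_i)\le\|x_i(t)-\bar x(t)\|$ directly, and $g_i^+(\bar x(t))$ is compared to $g_i^+(z_i(t))$ via the subgradient bound. Your proposed route through $\mathrm{dist}(z_i(t),X_i)\le\|\phi_i(t)\|+\beta\|k_i(t)\|+\|y_i(t)\|$ is circular: these quantities are what you are trying to \emph{produce} on the left side of the final inequality (inside $w_1,w_2$ and hence $\gamma_1,\gamma_2$), not what you may freely spend on the right. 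Following the paper's path, all consensus-type errors are expressed through $\alpha(t)\|x_i(t)-\bar x(t)\|$, which Lemma~\ref{lemma9} already bounds by $C_6\gamma_1(t)+C_7\gamma_2(t)+\xi_1(t)$; this is exactly what yields the constants $D_1,D_4,D_5$.

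One further detail: relating $(g_i^+(v_i(t)))^2$ to $(g_i^+(z_i(t)))^2$ is done via convexity of $g_i^+$ (a subgradient inequality), not Lipschitz continuity, followed by a Young split with $\tau_2=4$; a second Young split with $\tau_1=M^2N/(\theta_1\beta(2-\beta))$ absorbs the $\alpha(t)\sum_i g_i^+(z_i(t))$ cross term coming from Proposition~\ref{prop2}. These two splittings are what generate the specific form of $D_2$ and the factor $\tfrac12$ in $\tfrac{\theta_1\beta(2-\beta)}{2M^2}$.
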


It should be noted that in Lemma~\ref{lemma11}, the summability of $\gamma_{i}(t)$, $i=1,2$, and $\eta_{j}(t),$ $j=1,2,3$, is still unknown for us. Hence, these terms need to be further analyzed.
Some properties of $\gamma_{i}(t)$, $i=1,2$, and $\eta_{j}(t),$ $j=1,2,3$, will be revealed in the following proposition whose proof is omitted for brevity.

\begin{proposition}\label{pro1}
For all $t\geq0$, the below inequalities hold:
\begingroup
\allowdisplaybreaks
\begin{subequations}\label{pro1_eq1}
\begin{align}
\gamma_{i}(t+1)&\leq\lambda_{1}\gamma_{i}(t)+\alpha(t)w_{i}(t),\ i=1,2,\label{pro1_eq1a}\\
\eta_{i}(t+1)&\leq\lambda_{2}\eta_{i}(t)+\gamma_{i}(t),\ i=1,2, \label{pro1_eq1b}\\
\eta_{3}(t+1)&\leq\lambda_{2}\eta_{3}(t)+\alpha(t)w_{2}(t).\label{pro1_eq1c}
\end{align}
\end{subequations}
\endgroup
\end{proposition}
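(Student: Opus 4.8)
The plan is to prove each of the three inequalities in Proposition~\ref{pro1} directly from the definitions of the variables $\gamma_i(t)$, $\eta_i(t)$, and $\eta_3(t)$ by splitting off the final term of each defining sum and recognizing the remainder as a scaled copy of the same variable at the previous time step. These are routine ``geometric-series recursion'' manipulations, so the write-up should be short.

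For \eqref{pro1_eq1a}, recall $\gamma_i(t)=\alpha(t)\sum_{s=0}^{t-1}\lambda_1^{t-1-s}w_i(s)$. I would write, for $t\geq 1$,
\[
\gamma_i(t+1)=\alpha(t+1)\sum_{s=0}^{t}\lambda_1^{t-s}w_i(s)
=\alpha(t+1)\Big(\lambda_1\sum_{s=0}^{t-1}\lambda_1^{t-1-s}w_i(s)+w_i(t)\Big),
\]
and then use $\alpha(t+1)\leq\alpha(t)$ (the step-size sequence is decaying) to bound $\alpha(t+1)\lambda_1\sum_{s=0}^{t-1}\lambda_1^{t-1-s}w_i(s)\leq\lambda_1\alpha(t)\sum_{s=0}^{t-1}\lambda_1^{t-1-s}w_i(s)=\lambda_1\gamma_i(t)$ and $\alpha(t+1)w_i(t)\leq\alpha(t)w_i(t)$; the case $t=0$ is immediate from $\gamma_i(0)=0$ and $\gamma_i(1)=\alpha(1)w_i(0)\leq\alpha(0)w_i(0)$. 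For \eqref{pro1_eq1b}, since $\eta_i(t)=\sum_{s=0}^{t-1}\lambda_2^{t-1-s}\gamma_i(s)$, I would again peel off the $s=t$ term: $\eta_i(t+1)=\sum_{s=0}^{t}\lambda_2^{t-s}\gamma_i(s)=\lambda_2\sum_{s=0}^{t-1}\lambda_2^{t-1-s}\gamma_i(s)+\gamma_i(t)=\lambda_2\eta_i(t)+\gamma_i(t)$, which is an exact identity, hence the claimed inequality. The argument for \eqref{pro1_eq1c} is identical in form, splitting the sum defining $\eta_3(t)=\sum_{s=0}^{t-1}\lambda_2^{t-1-s}\alpha(s)w_2(s)$ to obtain $\eta_3(t+1)=\lambda_2\eta_3(t)+\alpha(t)w_2(t)$.

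I do not anticipate a genuine obstacle here; the only point requiring any care is the bookkeeping at the boundary index $t=0$ (given the stated initial conditions $\gamma_i(0)=\eta_i(0)=\eta_3(0)=0$) and the single use of monotonicity $\alpha(t+1)\leq\alpha(t)$ needed for \eqref{pro1_eq1a}. Everything else is a matter of correctly shifting the exponent of $\lambda_1$ or $\lambda_2$ and separating the last summand. This is presumably why the paper deems the proof omittable ``for brevity.''
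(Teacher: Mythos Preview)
Your proposal is correct and is precisely the elementary ``peel off the last summand'' computation that the paper deems omittable; the only substantive point --- the use of the monotonicity $\alpha(t+1)\le\alpha(t)$ together with $w_i(s)\ge 0$ in \eqref{pro1_eq1a} --- you have identified and handled correctly, and \eqref{pro1_eq1b}--\eqref{pro1_eq1c} are indeed exact identities.
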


To continue, based on the results in Proposition~\ref{pro1} and \cite{Mai2019Aut}, we give another key lemma to
further characterize the evolution of all states $x_{i}(t)$.

\begin{lemma}\label{lemma12}
Under Assumptions~\ref{asm3}, \ref{asm1}, \ref{asm2}, and~\ref{asm4}, for all $v\in X^{*}$ and $t\geq0$, there holds
\begingroup
\allowdisplaybreaks
\begin{align}\label{lemma12_eq1}
&\veq\sum\limits_{i=1}^{N}\pi^{i}(t+1)\|x_{i}(t+1)-v\|^{2}+e(t+1)\notag\\
&\leq\sum\limits_{i=1}^{N}\pi^{i}(t)\|x_{i}(t)-v\|^{2}+e(t)+2\theta_{2}\alpha(t)(f(v)-f(s(t))\notag\\
&\veq+\xi_{4}(t)-\frac{\theta_{1}}{2}\sum\limits_{i=1}^{N}\|\phi_{i}(t)\|^{2}-\frac{\theta_{1}\beta(2-\beta)}{4 M^{2}}\sum\limits_{i=1}^{N}(g_{i}^{+}(z_{i}(t)))^{2},
\end{align}
\endgroup
where $e(t)=\sum\limits_{i=1}^{2}(a_{i}b_{i}\gamma_{i}(t)+c_{i}\eta_{i}(t))+a_{3}b_{3}\eta_{3}(t)$ and
$\xi_{4}(t)=\xi_{3}(t)+\frac{1}{2}(a_{1}^{2}+a_{2}^{2}+a_{3}^{2})\alpha^{2}(t)$
with
\begin{align*}
a_{i}&=\frac{c_{i}+D_{3+i}}{(1-\lambda_{2})b_{i}},\ i=1,2,\
a_{3}=\frac{D_{8}}{(1-\lambda_{2})b_{3}},\
b_{1}=\sqrt{\frac{\theta_{1}}{N}},\\
b_{2}&=b_{3}=\sqrt{\frac{\theta_{1}\beta(2-\beta)}{4M^{2}N}},\
c_{i}=\frac{D_{5+i}}{1-\lambda_{2}},\ i=1,2.
\end{align*}
\end{lemma}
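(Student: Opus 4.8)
\textbf{Proof proposal for Lemma~\ref{lemma12}.}

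The plan is to take the estimate \eqref{lemma11_eq1} from Lemma~\ref{lemma11} and absorb the ``annoying'' cross terms $D_{4}\gamma_{1}(t)+D_{5}\gamma_{2}(t)+D_{6}\eta_{1}(t)+D_{7}\eta_{2}(t)+D_{8}\eta_{3}(t)$ into a suitably constructed nonnegative Lyapunov-type quantity $e(t)$, so that the resulting inequality has the clean ``almost-supermartingale'' form required for the eventual application of Lemma~\ref{lemma3}. The idea is standard: $e(t)$ is built as a nonnegative linear combination $e(t)=\sum_{i=1}^{2}(a_{i}b_{i}\gamma_{i}(t)+c_{i}\eta_{i}(t))+a_{3}b_{3}\eta_{3}(t)$, and the coefficients $a_i,b_i,c_i$ are chosen precisely so that the recursive inequalities \eqref{pro1_eq1a}--\eqref{pro1_eq1c} of Proposition~\ref{pro1} cause the ``increase'' of $e(t)$ over one step to cancel the bad positive terms in \eqref{lemma11_eq1}, at the cost of only (i) a term proportional to $\sum_i\|\phi_i(t)\|^2$ and $\sum_i (g_i^+(z_i(t)))^2$ that can be paid for out of the two negative terms $-\sum_i\theta_1\|\phi_i(t)\|^2$ and $-\sum_i\frac{\theta_1\beta(2-\beta)}{2M^2}(g_i^+(z_i(t)))^2$ already present, and (ii) an extra $\mathcal{O}(\alpha^2(t))$ term that gets folded into $\xi_4(t)$.

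First I would compute $e(t+1)-e(t)$ using Proposition~\ref{pro1}: from \eqref{pro1_eq1a}, $\gamma_i(t+1)-\gamma_i(t)\le -(1-\lambda_1)\gamma_i(t)+\alpha(t)w_i(t)$; from \eqref{pro1_eq1b}, $\eta_i(t+1)-\eta_i(t)\le -(1-\lambda_2)\eta_i(t)+\gamma_i(t)$; and from \eqref{pro1_eq1c}, $\eta_3(t+1)-\eta_3(t)\le -(1-\lambda_2)\eta_3(t)+\alpha(t)w_2(t)$. Summing with the weights in $e(t)$ gives
\begin{align*}
e(t+1)-e(t)&\le -\sum_{i=1}^{2}a_ib_i(1-\lambda_1)\gamma_i(t)+\sum_{i=1}^{2}a_ib_i\alpha(t)w_i(t)\\
&\veq-\sum_{i=1}^{2}c_i(1-\lambda_2)\eta_i(t)+\sum_{i=1}^{2}c_i\gamma_i(t)-a_3b_3(1-\lambda_2)\eta_3(t)+a_3b_3\alpha(t)w_2(t).
\end{align*}
Now I add \eqref{lemma11_eq1} to this. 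The $\eta_i(t)$ coefficients match up by choosing $c_i(1-\lambda_2)=D_{5+i}$, i.e. $c_i=\frac{D_{5+i}}{1-\lambda_2}$, and $a_3b_3(1-\lambda_2)=D_8$, i.e. $a_3=\frac{D_8}{(1-\lambda_2)b_3}$; and the $\gamma_i(t)$ coefficients match by $a_ib_i(1-\lambda_1)\ge c_i+D_{3+i}$ — here, since $1-\lambda_1$ can be replaced by the weaker bound $1-\lambda_2$ (or one simply works with $(1-\lambda_2)$ throughout, as the stated $a_i$ suggests), one sets $a_i=\frac{c_i+D_{3+i}}{(1-\lambda_2)b_i}$. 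The remaining positive contributions are $\sum_{i=1}^{2}a_ib_i\alpha(t)w_i(t)+a_3b_3\alpha(t)w_2(t)$; recalling $w_1(t)=\sum_i\|\phi_i(t)\|$ and $w_2(t)=\sum_i g_i^+(z_i(t))$, I apply Young's inequality $\alpha(t)w\le \frac12\alpha^2(t)+\frac12 w^2$ (after the choice $b_1=\sqrt{\theta_1/N}$, $b_2=b_3=\sqrt{\theta_1\beta(2-\beta)/(4M^2N)}$, which is exactly what makes $a_ib_i w_i(t)=a_i b_i \sum_j(\cdot)\le \frac12 a_i^2\alpha^2(t)+\frac{b_i^2}{2}\big(\sum_j(\cdot)\big)^2$ and then $b_i^2\big(\sum_j(\cdot)\big)^2\le b_i^2 N\sum_j(\cdot)^2$ precisely half of the available negative term). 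This converts the $w_i$ terms into $\frac12(a_1^2+a_2^2+a_3^2)\alpha^2(t)$ plus at most $\frac{\theta_1}{2}\sum_i\|\phi_i(t)\|^2+\frac{\theta_1\beta(2-\beta)}{4M^2}\sum_i(g_i^+(z_i(t)))^2$, which cancels exactly half of each negative term in \eqref{lemma11_eq1}, leaving the negative halves displayed in \eqref{lemma12_eq1}. Collecting $\xi_3(t)$ together with the new $\frac12(a_1^2+a_2^2+a_3^2)\alpha^2(t)$ into $\xi_4(t)$ finishes the derivation.

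The main obstacle, and the point requiring the most care, is the bookkeeping of the Young-inequality split: one must verify that the constants $b_1,b_2,b_3$ are chosen so that the positive quadratic residue is bounded by \emph{exactly} half (or less) of the two negative terms $-\theta_1\sum_i\|\phi_i(t)\|^2$ and $-\frac{\theta_1\beta(2-\beta)}{2M^2}\sum_i(g_i^+(z_i(t)))^2$, using $\big(\sum_{i=1}^N a_i\big)^2\le N\sum_{i=1}^N a_i^2$ — any loose constant here would break the clean cancellation. A secondary subtlety is handling the mismatch between the decay rates $\lambda_1$ (for $\gamma_i$) and $\lambda_2$ (for $\eta_i$): one simply uses $\max\{\lambda_1,\lambda_2\}<1$, or equivalently works with the common factor $(1-\lambda_2)$ in defining the $a_i$, since the $\gamma_i$ recursion contracts at least as fast. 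Everything else — nonnegativity of $e(t)$ (immediate, as all $a_i,b_i,c_i>0$ and $\gamma_i,\eta_j\ge0$) and the fact that $\xi_4(t)$ inherits the structure needed later (a $\lambda_2^{t}$-geometric part, convolution-with-$\alpha$ parts, and an $\alpha^2(t)$ part, all summable by Lemma~\ref{lemma2} once $\sum\alpha^2<\infty$) — is routine substitution.
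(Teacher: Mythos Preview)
Your proposal is correct and follows essentially the same route as the paper's proof: add $e(t+1)$ to both sides of \eqref{lemma11_eq1}, bound $e(t+1)$ via the recursions in Proposition~\ref{pro1}, choose $c_i,a_i$ so that the $\eta_i(t)$ and $\gamma_i(t)$ coefficients cancel, and then split each $a_jb_j\alpha(t)w_j(t)$ by Young's inequality together with $w_j^2(t)\le N\sum_i(\cdot)^2$, so that with the stated $b_1,b_2,b_3$ exactly half of each negative term survives. Your flagging of the $\lambda_1$ versus $\lambda_2$ discrepancy in the definition of $a_i$ is apt---the paper simply writes equalities for the $\gamma_i$ cancellation even though the stated $a_i$ uses $(1-\lambda_2)$; your suggested fix via $\max\{\lambda_1,\lambda_2\}$ (rather than assuming $\lambda_1\le\lambda_2$) is the right way to make this rigorous in general.
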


\subsection{Proof of Theorem~\ref{theorem1}}\label{sec:PT-Thm1}

In this subsection, based on the above intermediate results and the proof of \cite[Theorem 4]{Mai2019Aut}, we are now ready to present the detailed proof of Theorem~\ref{theorem1} regarding the convergence property of the algorithm (\ref{algorithm1_eq1}).

\begin{IEEEproof}[\textbf{Proof~of~Theorem~\ref{theorem1}}]
First, the nonnegative sequences $\{a(t)\}$, $\{b(t)\}$, $\{c(t)\}$, and $\{d(t)\}$ in Lemma~\ref{lemma3} can be defined as follows:
\begingroup
\allowdisplaybreaks
\begin{align*}\label{theorem1_proof_eq1}
a(t)&=\sum\limits_{i=1}^{N}\pi^{i}(t)\|x_{i}(t)-v\|^{2}+e(t),\ b(t)=0,\ c(t)=\xi_{4}(t),\notag\\
d(t)&=2\theta_{2}\alpha(t)(f(s(t))-f(v))+\frac{\theta_{1}}{2}\sum\limits_{i=1}^{N}\|\phi_{i}(t)\|^{2}+\frac{\theta_{1}\beta(2-\beta)}{4 M^{2}}\sum\limits_{i=1}^{N}(g_{i}^{+}(z_{i}(t)))^{2}.
\end{align*}
\endgroup%
Then, noting that $\alpha^{2}(t)$ is summable, we can easily obtain from part (b) of Lemma~\ref{lemma2} that $\xi_{1}(t)$ is summable. In a similar manner, we can get that $\xi_{2}(t)$ and $\xi_{3}(t)$ are also summable, which directly implies the summability of $\xi_{4}(t)$ (i.e., $c(t)$). So by Lemma~\ref{lemma3}, there exists a nonnegative constant $\delta_{0}$ such that
\begin{equation}\label{theorem1_proof_eq2}
\lim\limits_{t\to\infty}\bigg(\sum\limits_{i=1}^{N}\pi^{i}(t)\|x_{i}(t)-v\|^{2}+e(t)\bigg)=\delta_{0},
\end{equation}
\begin{equation}\label{theorem1_proof_eq3}
\sum\limits_{t=0}^{\infty}\alpha(t)(f(s(t))-f(v))<\infty,
\end{equation}
\vspace{-1ex}
\begin{equation}\label{theorem1_proof_eq4}
\sum\limits_{t=0}^{\infty}\bigg[\frac{\theta_{1}}{2}\sum\limits_{i=1}^{N}\|\phi_{i}(t)\|^{2}+\frac{\theta_{1}\beta(2-\beta)}{4 M^{2}}\sum\limits_{i=1}^{N}(g_{i}^{+}(z_{i}(t)))^{2}\bigg]<\infty.
\end{equation}

Clearly, it follows from (\ref{theorem1_proof_eq4}) that
\begin{equation}
\lim\limits_{t\to\infty}\sum\limits_{i=1}^{N}\|\phi_{i}(t)\|^{2}=0,\ \
\label{theorem1_proof_eq6}
\lim\limits_{t\to\infty}\sum\limits_{i=1}^{N}(g_{i}^{+}(z_{i}(t)))^{2}=0,
\end{equation}
which yields that $\lim\limits_{t\to\infty}\gamma_{i}(t)=0,\ i=1,2,$ and $\lim\limits_{t\to\infty}\eta_{i}(t)=0,\ i=1,2,3,$ according to part (a) of Lemma~\ref{lemma2}. Thus, we have $\lim\limits_{t\to\infty}e(t)=0$, and from (\ref{lemma9_proof_eq7}) to be given in the proof of Lemma~\ref{lemma9} in the Appendix, we can conclude that
\begin{equation}\label{theorem1_proof_eq7}
\lim\limits_{t\to\infty}\|x_{i}(t)-\bar{x}(t)\|=0.
\end{equation}
Furthermore, considering (\ref{theorem1_proof_eq2}), we have 
\begin{equation*}\label{theorem1_proof_eq8}
\lim\limits_{t\to\infty}\sum\limits_{i=1}^{N}\pi^{i}(t)\|x_{i}(t)-v\|^{2}=\delta_{0}.
\end{equation*}

Given that $\sum\limits_{t=0}^{\infty}\alpha(t)=\infty$, we can get from (\ref{theorem1_proof_eq3}) that $\liminf\limits_{t\to\infty}f(s(t))=f(v)$. As $\{s(t)\}$ is contained in the compact set $X$, it has a convergent subsequence $\{s(t_{k})\}$, that is, there exists $s^{*}$ such that $\lim\limits_{k\to\infty}s(t_{k})=s^{*}$. Moreover, since $X_{0}$ is a closed set, we have $s^{*}\in X_{0}$. Thus, it follows from the continuity property of $f$ that $\lim\limits_{t\to\infty}f(s(t_{k}))=f(s^{*})=f(v)$,
which implies that $s^{*}\in X^{*}$. Since $v$ is arbitrarily chosen in $X^{*}$, we can substitute $v$ by $s^{*}$. Then, it can be verified that $\delta_{0}=0$ when $v=s^{*}$. So we have 
\begin{equation*}
\|x_{i}(t)\!-\!s^{*}\|^{2}\leq3(\|x_{i}(t)\!-\!\bar{x}(t)\|^{2}\!+\!\|\bar{x}(t)\!-\!s(t)\|^{2}\!+\!\|s(t)\!-\!s^{*}\|^{2}).
\end{equation*}

Utilizing
(\ref{lemma11_proof_eq6}) to be given in the proof of Lemma~\ref{lemma11} in the Appendix, we arrive at
\begingroup
\allowdisplaybreaks
\begin{align*}
\|\bar{x}(t)-s(t)\|^{2}&\leq 2R^{2}(NM+1)^{2}N\sum\limits_{i=1}^{N}\|x_{i}(t)-\bar{x}(t)\|^{2}+2R^{2}N\sum\limits_{j=1}^{N}(g_{i}^{+}(z_{i}(t)))^{2}.
\end{align*}
\endgroup
As a result, we attain
\begin{align}\label{theorem1_proof_eq10}
\frac{1}{3}\|x_{i}(t)-s^{*}\|^{2}
&\leq\|x_{i}(t)-\bar{x}(t)\|^{2}+2R^{2}(NM+1)^{2}N\sum\limits_{i=1}^{N}\|x_{i}(t)-\bar{x}(t)\|^{2}\notag\\
&\veq+2R^{2}N\sum\limits_{i=1}^{N}(g_{i}^{+}(z_{i}(t)))^{2}+\|s(t)-s^{*}\|^{2}.
\end{align}
Multiplying both sides of the above inequality (\ref{theorem1_proof_eq10}) by $\pi^{i}(t)$ and summing over from $i=1$ to $i=N$ produces
\begingroup
\allowdisplaybreaks
\begin{align}\label{theorem1_proof_eq11}
\sum\limits_{i=1}^{N}\frac{\pi^{i}(t)}{3}\|x_{i}(t)-s^{*}\|^{2}
&\leq\sum\limits_{i=1}^{N}\pi^{i}(t)\|x_{i}(t)-\bar{x}(t)\|^{2}+\sum\limits_{i=1}^{N}\pi^{i}(t)\|s(t)-s^{*}\|^{2}\notag\\
&\veq+2R^{2}(M+1)^{2}N\sum\limits_{i=1}^{N}\pi^{i}(t)\sum\limits_{i=1}^{N}\|x_{i}(t)-\bar{x}(t)\|^{2}\notag\\
&\veq+2R^{2}N\sum\limits_{i=1}^{N}\pi^{i}(t)\sum\limits_{i=1}^{N}(g_{i}^{+}(z_{i}(t)))^{2}.
\end{align}
\endgroup
Then, taking $\liminf\limits_{t\to\infty}$ for both sides of (\ref{theorem1_proof_eq11}) and using (\ref{theorem1_proof_eq6}) as well as (\ref{theorem1_proof_eq7}) yields $\frac{\delta_{0}}{3}\leq\liminf_{t\to\infty}\|s(t)-s^{*}\|$.

Finally, since $\lim\limits_{k\to\infty}s(t_{k})=s^{*}$, we have 
\begin{equation*}
\liminf\limits_{t\to\infty}\|s(t)-s^{*}\|=0,
\end{equation*}
which implies that $\delta_{0}=0$. Noting that $\pi^{i}(t)\geq\theta_{1}>0$ for all $i=1,\cdots,N$, and $t\geq0$, we can get that $\lim\limits_{t\to\infty}x_{i}(t)=s^{*}$ for all $i=1,\cdots,N$, thus establishing the convergence of the algorithm (\ref{algorithm1_eq1}). The proof is complete.
\end{IEEEproof}

\subsection{Proof of Theorem~\ref{theorem2}}\label{sec:PT-Thm2}

In this subsection, with the inspirations from the analysis of the convergence rate shown in \cite{Nedic2015TAC} and \cite{Mai2019Aut} and the above intermediate results in Subsection~\ref{sec:PT-Pre}, we will give the detailed proof of Theorem~\ref{theorem2} about the convergence rate of the
algorithm (\ref{algorithm1_eq1}).

\begin{IEEEproof}[\textbf{Proof~of~Theorem~\ref{theorem2}}]
First, according to (\ref{lemma12_eq1}), by defining
\begin{align*}
\varphi_{1}(t)=\frac{\theta_{1}}{4\theta_{2}}\sum\limits_{i=1}^{N}\|\phi_{i}(t)\|^{2},\
\varphi_{2}(t)=\frac{\theta_{1}\beta(2-\beta)}{8 \theta_{2}M^{2}}\sum\limits_{i=1}^{N}(g_{i}^{+}(z_{i}(t)))^{2},
\end{align*}
we have
\begingroup
\allowdisplaybreaks
\begin{align}\label{them2_proof_eq1}
&\veq\sum\limits_{k=0}^{t}\bigg[\alpha(k)(f(s(k))-f^{*})+\varphi_{1}(k)+\varphi_{2}(k)\bigg]\notag\\
&\leq \frac{1}{2\theta_{2}}\Big(\sum\limits_{i=1}^{N}\pi^{i}(0)\|x_{i}(0)-v\|^{2}+e(0)+\sum\limits_{k=0}^{t}\xi_{4}(k)\Big).
\end{align}
\endgroup
Obviously, for any nonnegative summable scalar sequence $\{h(t)\}$ and any constant $0<\rho<1$, there holds
\begingroup
\allowdisplaybreaks
\begin{align}\label{them2_proof_eq2}
\sum\limits_{k=0}^{t}\sum\limits_{s=0}^{k}\rho^{k-s}h(s)=\sum\limits_{s=0}^{t}h(s)\sum\limits_{k=s}^{t}\rho^{k-s}\leq\sum\limits_{k=0}^{t}\frac{h(k)}{1-\rho},
\end{align}
\endgroup
which implies that
\begingroup
\allowdisplaybreaks
\begin{align}\label{them2_proof_eq3}
\sum\limits_{k=0}^{t}\xi_{1}(k)
&\leq\frac{C_{8}\alpha(0)}{(1-\lambda_{1})(1-\lambda_{2})}+\frac{C_{10}}{1-\lambda_{1}}\sum\limits_{k=0}^{t}\alpha^{2}(k)\notag\\
&\veq+\frac{C_{5}\alpha(0)}{1-\lambda_{1}}+\frac{C_{9}}{(1-\lambda_{1})(1-\lambda_{2})}\sum\limits_{k=0}^{t}\alpha^{2}(k)\notag\\
&=M_{1}+M_{2}\sum\limits_{k=0}^{t}\alpha^{2}(k),\\
\label{them2_proof_eq4}
\sum\limits_{k=0}^{t}\xi_{2}(k)
&\leq M\alpha(0)+L(1+N)\Big(M_{1}+M_{2}\sum\limits_{k=0}^{t}\alpha^{2}(k)\Big)\notag\\
&\veq+\frac{C_{13}\alpha(0)}{1-\lambda_{2}}\!+\!\frac{C_{14}}{1-\lambda_{2}}\!\sum\limits_{k=0}^{t}\!\alpha^{2}(k)
\!+\!C_{15}\!\sum\limits_{k=0}^{t}\!\alpha^{2}(k)\notag\\
&=M_{3}+M_{4}\sum\limits_{k=0}^{t}\alpha^{2}(k),\\
\label{them2_proof_eq5}
\sum\limits_{k=0}^{t}\xi_{3}(k)
&\leq 2ND_{1}\Big(M_{1}+M_{2}\sum\limits_{k=0}^{t}\alpha^{2}(k)\Big)+D_{2}\sum\limits_{k=0}^{t}\alpha^{2}(k)\notag\\
&\veq+\frac{C_{3}D_{3}N}{1-\lambda_{2}}+\frac{C_{17}D_{3}N}{1-\lambda_{2}}\Big(M_{3}+M_{4}\sum\limits_{k=0}^{t}\alpha^{2}(k)\Big)\notag\\
&=M_{5}+M_{6}\sum\limits_{k=0}^{t}\alpha^{2}(k),\\
\label{them2_proof_eq6}
\sum\limits_{k=0}^{t}\xi_{4}(k)
&\leq M_{5}+M_{7}\sum\limits_{k=0}^{t}\alpha^{2}(k),
\end{align}
\endgroup
where
\begingroup
\allowdisplaybreaks
\begin{align*}
M_{1}&=\frac{D_{8}\alpha(0)}{(1-\lambda_{1})(1-\lambda_{2})}+\frac{C_{5}\alpha(0)}{1-\lambda_{1}},\
M_{2}=\frac{C_{10}}{1-\lambda_{1}}+\frac{C_{9}}{(1-\lambda_{1})(1-\lambda_{2})},\notag\\
M_{3}&=M\alpha(0)+L(1+N)M_{1}+\frac{C_{13}\alpha(0)}{1-\lambda_{2}},\
M_{4}=L(1+N)M_{2}+\frac{C_{14}}{1-\lambda_{2}}+C_{15},\notag\\
M_{5}&=2D_{1}M_{1}N+\frac{C_{3}D_{3}N}{1-\lambda_{2}}+\frac{C_{17}D_{3}M_{3}N}{1-\lambda_{2}},\
M_{6}=2D_{1}M_{2}N+D_{2}+\frac{C_{17}D_{3}M_{4}N}{1-\lambda_{2}},\notag\\
M_{7}&=M_{6}+\frac{1}{2}(a_{1}^{2}+a_{2}^{2}+a^{2}_{3}).
\end{align*}
\endgroup
Thus, it follows from (\ref{them2_proof_eq1}) that
\begingroup
\allowdisplaybreaks
\begin{align}\label{them2_proof_eq7}
\sum\limits_{k=0}^{t}\bigg[\alpha(k)(f(s(k))-f^{*})+\varphi_{1}(k)+\varphi_{2}(k)\bigg]\leq M_{8}+M_{9}\sum\limits_{k=0}^{t}\alpha^{2}(k),
\end{align}
\endgroup
with
\vspace{-1ex}
\begin{align*}
M_{8}=\frac{1}{2\theta_{2}}\!\bigg(\!\sum\limits_{i=1}^{N}\!\pi^{i}(t)\|x_{i}(0)\!-\!v\|^{2}\!+\!e(0)\!+\!M_{5}\!\bigg)\!,\;
M_{9}=\frac{M_{7}}{2\theta_{2}}.
\end{align*}

Additionally, according to (\ref{lemma11_proof_eq6}) to be given in the proof of Lemma~\ref{lemma11} in the Appendix, we can get
\begingroup
\allowdisplaybreaks
\begin{align}\label{them2_proof_eq8}
&\veq\|x_{i}(t)-s(t)\|\notag\\
&\leq \|x_{i}(t)-\bar{x}(t)\|+\|\bar{x}(t)-s(t)\|\notag\\
&\leq \|x_{i}(t)-\bar{x}(t)\|+R\sum\limits_{i=1}^{N}g_{i}^{+}(z_{i}(t))+R(MN+1)\sum\limits_{i=1}^{N}\|x_{i}(t)-\bar{x}(t)\|.
\end{align}
\endgroup
Thus, by Lemma~\ref{lemma9}, we have
\begingroup
\allowdisplaybreaks
\begin{align}\label{them2_proof_eq9}
\frac{\alpha(t)\|x_{i}(t)-s(t)\|}{\kappa(RMN^{2}+RN+1)}
&\leq\gamma_{1}(t)+\gamma_{2}(t)+\frac{1}{\kappa}\xi_{1}(t)+\frac{R\alpha(t)w_{2}(t)}{\kappa(RMN^{2}+RN+1)},
\end{align}
\endgroup
with $\kappa=\max\{C_{6},C_{7}\}$. Hence, combining (\ref{them2_proof_eq3}) and (\ref{them2_proof_eq9}) together yields
\begingroup
\allowdisplaybreaks
\begin{align}\label{them2_proof_eq10}
&\veq\frac{1}{\kappa(RMN^{2}+RN+1)}\sum\limits_{k=0}^{t}\alpha(k)\|x_{i}(k)-s(k)\|\notag\\
&\leq\sum\limits_{k=0}^{t}\gamma_{1}(k)+\sum\limits_{k=0}^{t}\gamma_{2}(k)+\frac{1}{\kappa}\Big(M_{1}+M_{2}\sum\limits_{k=0}^{t}\alpha^{2}(k)\Big)\notag\\
&\veq+\frac{R}{\kappa(RMN^{2}+RN+1)}\sum\limits_{k=0}^{t}\alpha(k)w_{2}(t).
\end{align}
\endgroup
Clearly, we have
\begingroup
\allowdisplaybreaks
\begin{align}\label{them2_proof_eq11}
\sum\limits_{k=0}^{t}\gamma_{i}(k)
\leq\lambda_{1}\sum\limits_{k=0}^{t-1}\gamma_{i}(k)+\sum\limits_{k=0}^{t-1}\alpha(k)w_{i}(k)
\leq\lambda_{1}\sum\limits_{k=0}^{t}\gamma_{i}(k)+\sum\limits_{k=0}^{t}\alpha(k)w_{i}(k),
\end{align}
\endgroup
which implies that
\vspace{-1ex}
\begingroup
\allowdisplaybreaks
\begin{align}\label{them2_proof_eq12}
\sum\limits_{k=0}^{t}\gamma_{i}(k)
\leq\frac{1}{1-\lambda_{1}}\sum\limits_{k=0}^{t}\alpha(k)w_{i}(k)
\leq\frac{1}{2(1-\lambda_{1})}\sum\limits_{k=0}^{t}\alpha^{2}(k)+\frac{1}{2(1-\lambda_{1})}\sum\limits_{k=0}^{t}w^{2}_{i}(k).
\end{align}
\endgroup

\vspace{-1ex}
\noindent
Thus, it follows from (\ref{them2_proof_eq10}) that
\vspace{-1ex}
\begingroup
\allowdisplaybreaks
\begin{align}\label{them2_proof_eq13}
&\veq\frac{2(1-\lambda_{1})}{\kappa(RMN^{2}+RN+1)}\sum\limits_{k=0}^{t}\alpha(k)\|x_{i}(k)-s(k)\|\notag\\
&\leq\sum\limits_{k=0}^{t}\alpha^{2}(k)+\sum\limits_{k=0}^{t}w^{2}_{1}(k)+\sum\limits_{k=0}^{t}\alpha^{2}(k)+\sum\limits_{k=0}^{t}w^{2}_{2}(k)+\sum\limits_{k=0}^{t}w^{2}_{2}(k)\notag\\
&\veq+\frac{2(1-\lambda_{1})}{\kappa}\Big(M_{1}+M_{2}\sum\limits_{k=0}^{t}\alpha^{2}(k)\Big)
+\frac{R^{2}(1-\lambda_{1})^{2}}{\kappa^{2}(RMN^{2}+RN+1)^{2}}\sum\limits_{k=0}^{t}\alpha^{2}(k).
\end{align}
\endgroup

\vspace{-1ex}
Now, introducing $\psi_{1}(t)=\sum\limits_{i=1}^{N}\|\phi_{i}(t)\|^{2}$ and $\psi_{2}(t)=\sum\limits_{i=1}^{N}(g_{i}^{+}(z_{i}(t)))^{2}$,
and noting that
$w_{i}^{2}(t)\leq N\psi_{i}(t)$, $i=1,2$, we obtain
\vspace{-1ex}
\begingroup
\allowdisplaybreaks
\begin{align}\label{them2_proof_eq14}
H_{1}\sum\limits_{k=0}^{t}\alpha(k)\|x_{i}(k)-s(k)\|
\leq M_{10}+M_{11}\sum\limits_{k=0}^{t}\alpha^{2}(k)+\delta(\psi_{1}(t)+\psi_{2}(t)),
\end{align}
\endgroup
where $\delta=\min\Big\{\frac{\theta_{1}}{4\theta_{2}},\frac{\theta_{1}\beta(2-\beta)}{8 \theta_{2}M^{2}}\Big\}$,
\vspace{-0.5ex}
\begin{align*}
H_{1}&=\frac{\delta(1-\lambda_{1})}{\kappa(RMN^{2}+RN+1)N},\ M_{10}=\frac{(1-\lambda_{1})M_{1}\delta}{\kappa N},\\
M_{11}&=\frac{\delta}{N}+\frac{(1-\lambda_{1})M_{2}\delta}{\kappa N}+\frac{R^{2}(1-\lambda_{1})^{2}\delta}{2\kappa^{2}(RMN^{2}+RN+1)^{2}N}.
\end{align*}
Then, it follows from (\ref{them2_proof_eq7}) and (\ref{them2_proof_eq14}) that
\vspace{-1ex}
\begingroup
\allowdisplaybreaks
\begin{align}\label{them2_proof_eq15}
\sum\limits_{k=0}^{t}[H_{1}\alpha(k)\|x_{i}(k)-s(k)\|+\alpha(k)(f(s(k))-f^{*})]
\leq H_{2}+H_{3}\sum\limits_{k=0}^{t}\alpha^{2}(k),
\end{align}
\endgroup
where $H_{2}=M_{8}+M_{10}$ and $H_{3}=M_{9}+M_{11}$.

Since $f(x)$ and the norm function are both convex, we have
\begingroup
\allowdisplaybreaks
\begin{align}
\label{them2_proof_eq16}
f(\tilde{s}(t))-f^{*}&\leq\sum\limits_{k=0}^{t}\frac{\alpha(k)(f(s(k))-f^{*})}{\sum\limits_{k=0}^{t}\alpha(k)},\\
\label{them2_proof_eq17}
\|\tilde{x}_{i}(t)-\tilde{s}(t)\|&\leq\sum\limits_{k=0}^{t}\frac{\alpha(k)\|x_{i}(k)-s(k)\|}{\sum\limits_{k=0}^{t}\alpha(k)}.
\end{align}
\endgroup
Dividing both sides of (\ref{them2_proof_eq15}) by $\sum\limits_{k=0}^{t}\alpha(k)$, we can get (\ref{them2_eq1}) from (\ref{them2_proof_eq16}) and (\ref{them2_proof_eq17}), with $L(t)$ being as given in (\ref{them2_eq2}). Furthermore, it is straightforward to deduce
\begingroup
\allowdisplaybreaks
\begin{align}\label{them2_proof_eq18}
|f(\tilde{x}_{i}(t))-f^{*}|
&\leq|f(\tilde{x}_{i}(t))-f(\tilde{s}(t))|+|f(\tilde{s}(t))-f^{*}|\notag\\
&\leq MN\|\tilde{x}_{i}(t)-\tilde{s}(t)\|+|f(\tilde{s}(t))-f^{*}|\notag\\
&\leq\frac{MN}{H_{1}}L(t)+L(t)=\bigg(\frac{MN}{H_{1}}+1\bigg)L(t).
\end{align}
\endgroup
We have thus completed the proof of the convergence rate.
\end{IEEEproof}

\section{Computer Simulations}\label{sec:simu}


Similar to the numerical example shown in \cite{Mai2019Aut}, we consider a constrained optimization problem involved in the machine learning problems with its global objective function defined on $\mathbb{R}^{3}$ as following:
\vspace{-1ex}
\begin{subequations}\label{eqn:f}
\begin{equation}\label{eqn:f-a}
\min_{x\in X}\ f(x)=\sum_{i=1}^{N}f_{i}(x),
\end{equation}
where
\begin{equation}\label{eqn:f-b}
f_{i}(x)=\ln[1+e^{-a_{i}(b_{i}^{T}\mathbf{u}+x^{3})}]+p_{i}(x),
\end{equation}%
\end{subequations}
with $p_{i}(x)=\frac{(x^{1})^{2}}{4}$ for $i=1,\cdots,r_{1}-1$, $p_{i}(x)=\frac{(x^{2})^{2}}{4}$ for $i=r_{1},\cdots,N$,
$\mathbf{u}=(x^{1},x^{2})^{T}$, $b_{i}=(0.01i,0.02i)^{T}$ are the feature vectors, and $a_{i}=(-1)^{i}$ are the corresponding labels.

\begin{figure}[t!]
\begin{center}
\includegraphics[scale=0.40]{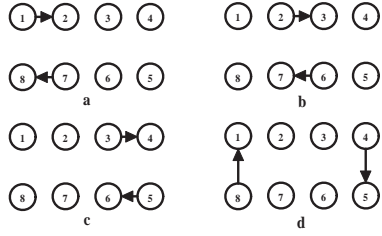}
\vspace{-1ex}
\caption{Four graphs with each having eight interacting agents and only the union of four graphs being strongly connected in Case A.}
\label{fig1}
\end{center}
\end{figure}

\textbf{\textit{Case A: $N=8$}}. In order to clearly show the effectiveness of the algorithm~(\ref{algorithm1_eq1}) in the case involving the time-varying unbalanced graphs with only the union of graphs being strongly connected, we first carry out numerical simulation when $N=8$ and $r_{1}=5$. Additionally, for $i=1,2,\cdots,N$, the inequality constraints $g_{i}(x)$ are selected as
\begin{equation}\label{eqn:gi}
g_{i}(x)=(x^{1})^{2}+ix^{2}+x^{3}-10\leq0,
\end{equation}
and the closed convex sets $X_{i}$ are chosen as
\begin{equation*}
X_{i}=\!\Big[\frac{i}{2}-3,\frac{i}{2}+1\Big]\!\!\times\!\!\Big[\frac{i}{2}-3.5,\frac{i}{2}+0.5\Big]
\!\!\times\!\!\Big[\frac{i}{2}-1,\frac{i}{2}+2.5\Big].
\end{equation*}
Thus, $X=\bigcap_{i=1}^{N} X_{i}=[1,2]\times[0.5,1]\times\{3\}$. To describe the time-varying unbalanced connections between eight nodes, four graphs with each having eight interacting agents are depicted in Fig.~\ref{fig1}, in which only the union of four graphs is strongly connected. Also, it is stipulated that the communication graph $\mathcal{G}(t)$ at step $t$ is respectively selected as Fig.~\ref{fig1}a, Fig.~\ref{fig1}b, Fig.~\ref{fig1}c, and Fig.~\ref{fig1}d when $\mod(t,4)=1$, $\mod(t,4)=2$, $\mod(t,4)=3$, and $\mod(k,4)=0$. Moreover, the step-sizes are chosen as $\alpha(t)=\frac{10^{-3}}{(t+1)^{0.6}}$ and $\beta=1$ for simulation. The transient behaviors of all states $x_{i}(t)$ are displayed in Fig.~\ref{fig2}, which clearly shows that all states $x_{i}(t)$ under the algorithm~(\ref{algorithm1_eq1}) converge to the common optimal solution $(1,0.5,3)^{T}$.

\textbf{\textit{Case A$'$: $N=8$}}. We now compare the algorithm~(\ref{algorithm1_eq1}) with Algorithm~1 developed in \cite{Mai2019Aut}. Case A$'$ is the same as Case A except for two changes that a strongly connected unbalanced graph as depicted in Fig.~\ref{fig3} is involved instead and the inequality constraints in (\ref{eqn:gi}) are removed. This is because Algorithm~1 in \cite{Mai2019Aut} is applicable only to the optimization problem subject to nonidentical local closed convex set constraints and under the strongly connected unbalanced graph. Moreover, all $x_{i}(t)$ under Algorithm~1 in \cite{Mai2019Aut} also converge to the common optimal solution $(1,0.5,3)^{T}$ within the setting as given in Case A$'$. Note that the algorithm~(\ref{algorithm1_eq1}) is still implemented under the same setting as described in Case A. The behaviors of the convergence criterion $\frac{1}{N}\sum\limits_{i=1}^{N}\frac{\|x_{i}(t)-x^{*}\|}{\|x^{*}\|}$ under the algorithm~(\ref{algorithm1_eq1}) in Case A and Algorithm~1 in \cite{Mai2019Aut} in Case A$'$ are plotted in Fig.~\ref{fig4}. It can be observed from Fig.~\ref{fig4} that these two algorithms exhibit a similar convergence rate although the extra inequality constraints in (\ref{eqn:gi}) and the time-varying unbalanced graphs in Fig.~\ref{fig1} are involved in the numerical simulation of Case A for the algorithm~(\ref{algorithm1_eq1}). This observation agrees well with the discussion in Remark~\ref{remark1}.

\begin{figure}[t!]
\begin{center}
\includegraphics[scale=0.4]{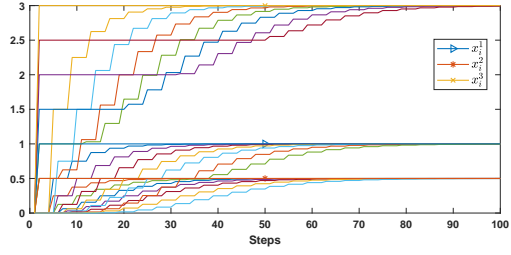}
\vspace{-1ex}
\caption{Behaviors of the states $x_{i}(t)$ under the algorithm~(\ref{algorithm1_eq1}) in Case A.}
\label{fig2}
\end{center}
\end{figure}

\begin{figure}[t!]
\begin{center}
\includegraphics[scale=0.50]{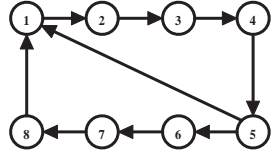}
\vspace{-1ex}
\caption{The strongly connected unbalanced graph introduced for Algorithm~1 in \cite{Mai2019Aut} with eight interacting agents in Case A$'$.}
\label{fig3}
\end{center}
\end{figure}

\begin{figure}[t!]
\begin{center}
\includegraphics[scale=0.40]{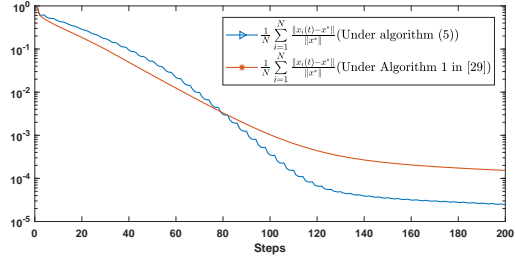}
\vspace{-1ex}
\caption{Behaviors of the convergence criterion $\frac{1}{N}\sum\limits_{i=1}^{N}\frac{\|x_{i}(t)-x^{*}\|}{\|x^{*}\|}$ under the algorithm~(\ref{algorithm1_eq1}) in Case A and Algorithm~1 in \cite{Mai2019Aut} in Case A$'$.}
\label{fig4}
\end{center}
\end{figure}

\textbf{\textit{Case B: $N=100$}}. In order to verify the effectiveness of the algorithm~(\ref{algorithm1_eq1}) in large-scale optimization problems, we reconsider the objective functions in (\ref{eqn:f}) but with $N=100$ and $r_{1}=51$. Furthermore, for $i=1,2,\cdots,N$, the inequality constraints are reselected as
\vspace{-0.5ex}
\begin{equation*}
g_{i}(x)=(x^{1})^{2}+0.1ix^{2}+x^{3}-10\leq0
\end{equation*}
and the closed convex sets are reselected as
\begin{equation*}
\begin{split}
X_{i}&=[0.06i-5,0.06i+1.94]\!\times\![0.06i-5.5,0.06i+0.94]\times[0.06i-3,0.06i+2.94].
\end{split}
\end{equation*}
Thus, $X=\bigcap\limits_{i=1}^{N} X_{i}=[1,2]\times[0.5,1]\times\{3\}$. Besides, the time-varying unbalanced directed graphs sequence with $N=100$ nodes is considered here, where the connections between nodes are time-varying in the sense that the associated iterative matrices $A(t)$ and $B(t)$ are randomly selected at each step $t$. With the same step-sizes $\alpha(t)$ and $\beta$ as chosen in Case~A, the transient behaviors of all states $x_{i}(t)$ under the algorithm~(\ref{algorithm1_eq1}) are illustrated in Fig.~\ref{fig5}, from which it is clearly seen that all states $x_{i}(t)$ also converge to the common optimal solution $x^{*}=(1,0.5,3)^{T}$.

\begin{figure}[t!]
\begin{center}
\includegraphics[scale=0.40]{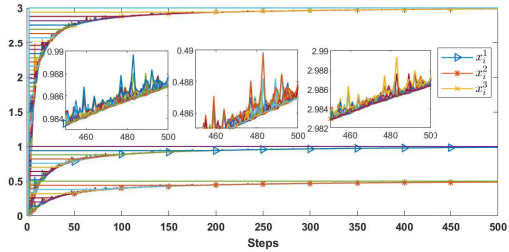}
\vspace{-1.5ex}
\caption{Behaviors of the state $x_{i}(t)$ under the algorithm~(\ref{algorithm1_eq1}) in Case B.}
\label{fig5}
\end{center}
\end{figure}

\section{Conclusion}\label{sec:conc}

In this paper, the optimization problem with nonidentical local convex inequality constraints and compact set constraints has been studied. Specifically, a distributed discrete-time algorithm has been developed over time-varying unbalanced directed topologies and its convergence property to the optimal solution has been rigorously confirmed. Moreover, the detailed analysis of the convergence rate has also been clearly shown for the proposed distributed algorithm. The advantage of the proposed algorithm is that the auxiliary variables have been introduced to estimate the gradients of the global objective function so as to offset the effect of the network-induced asymmetry. As a result, the optimization problem with multiple nonidentical local constraints has been successfully resolved over time-varying unbalanced topologies. More importantly, the work in this paper has shown an efficient mechanism for designing distributed algorithms for optimization problems with various types of constraints over time-varying unbalanced topologies. In the future, attention will be paid to addressing the distributed optimization problem formulated in this paper with a view to achieving a better convergence rate in the case strongly convex objective functions.

\section*{Appendix}\label{sec:appe}

In this Appendix, the proofs of all the intermediate results including Lemmas~\ref{lemma7}--\ref{lemma12} and Corollary~\ref{cor1} are shown in detail.

\begin{IEEEproof}[\textbf{Proof~of~Lemma~\ref{lemma7}}]
From the iteration rule (\ref{algorithm1_eq1c}) and the definition of $\tilde{y}(t)$, we have
\vspace{-1ex}
\begin{align*}
\tilde{y}(t+1)&=\sum\limits_{i=1}^{N}\sum\limits_{j=1}^{N}B_{ij}(t)y_{j}(t)+\alpha(t+1)\sum\limits_{i=1}^{N}\nabla f_{i}(x_{i}(t+1))
              -\alpha(t)\sum\limits_{i=1}^{N}\nabla f_{i}(x_{i}(t)).
\end{align*}
Considering that $B(t)$ are column stochastic matrices for all $t\geq0$, we can get
\begin{equation*}
\tilde{y}(t+1)=\tilde y(t)+\alpha(t+1)\nabla\tilde{f}(t+1)-\alpha(t)\nabla\tilde{f}(t).
\end{equation*}
Then, direct computation can yield
\begin{equation*}
\tilde{y}(t+1)-\alpha(t+1)\nabla\tilde{f}(t+1)=\tilde y(t)-\alpha(t)\nabla\tilde{f}(t),
\end{equation*}
which means that the error term $\tilde y(t)-\alpha(t)\nabla\tilde{f}(t)$ is invariant with respect to $t$, i.e., for all $t\geq0$,
\begin{equation*}
\tilde{y}(t)-\alpha(t)\nabla\tilde{f}(t)=\tilde y(0)-\alpha(0)\nabla\tilde{f}(0).
\end{equation*}
Therefore, with the given initial values, we have
\begin{equation*}
\tilde{y}(t)-\alpha(t)\nabla\tilde{f}(t)=\tilde y(0)-\alpha(0)\nabla\tilde{f}(0)=0,
\end{equation*}
which completes the proof.
\end{IEEEproof}

\begin{IEEEproof}[\textbf{Proof~of~Lemma~\ref{lemma8}}]
First, we can rewrite (\ref{algorithm1_eq1c}) as
\begin{equation*}
y_{i}(t+1)=\sum\limits_{j=1}^{N}B_{ij}(t)y_{j}(t)+\epsilon_{i}(t),
\end{equation*}
with $\epsilon_{i}(t)=\alpha(t+1)\nabla f_{i}(x_{i}(t+1))-\alpha(t)\nabla f_{i}(x_{i}(t))$ being bounded by $2\alpha(t)M$. Then, we have
\begingroup
\allowdisplaybreaks
\begin{align*}
y_{i}(t)&=\sum\limits_{j=1}^{N}B(t-1:0)_{ij}y_{j}(0)+\epsilon_{i}(t-1)
          +\sum\limits_{s=1}^{t-1}\sum\limits_{j=1}^{N}B(t-1:s)_{ij}\epsilon_{j}(s-1),\\
\tilde{y}(t)&=\tilde{y}(0)+\sum\limits_{s=0}^{t-1}\sum\limits_{j=1}^{N}\epsilon_{j}(s).
\end{align*}
\endgroup
As a result, we can obtain
\begingroup
\allowdisplaybreaks
\begin{align*}
y_{i}(t)-\mu^{i}(t-1)\tilde{y}(t)
&=\sum\limits_{j=1}^{N}D(t-1:0)_{ij}y_{j}(0)+\sum\limits_{s=1}^{t-1}\sum\limits_{j=1}^{N}D(t-1:s)_{ij}\epsilon_{j}(s-1)\notag\\
&\veq+\epsilon_{i}(t-1)-\mu^{i}(t-1)\sum\limits_{j=1}^{N}\epsilon_{j}(t-1),
\end{align*}
\endgroup
where $D(t:s)=B(t:s)-\mu(t)\mathbf{1}^{T}$. Next, from part (a) of Lemma~\ref{lemma5}, we can derive
\begingroup
\allowdisplaybreaks
\begin{align}\label{lemma8_proof_eq2}
\|y_{i}(t)-\mu^{i}(t-1)\tilde{y}(t)\|
&\leq\sum\limits_{j=1}^{N}C_{2}\lambda_{2}^{t-1}\|y_{j}(0)\|+\sum\limits_{j=1}^{N}\|\epsilon_{j}(t-1)\|\notag\\
&\veq+\sum\limits_{s=1}^{t-1}\sum\limits_{j=1}^{N}C_{2}\lambda_{2}^{t-1-s}\|\epsilon_{j}(s-1)\|\notag\\
&\leq C_{3}\lambda_{2}^{t-1}+C^{\prime}_{4}\sum\limits_{s=0}^{t-1}\sum\limits_{j=1}^{N}\lambda_{2}^{t-1-s}\|\epsilon_{j}(s)\|,
\end{align}
\endgroup
which implies (\ref{lemma8_eq1}). Thus, the proof is complete.
\end{IEEEproof}

\begin{IEEEproof}[\textbf{Proof~of~Corollary~\ref{cor1}}]
According to Lemma~\ref{lemma7}, we can directly get
\begin{align}\label{cor_proof_eq1}
\|\tilde{y}(t)\|\leq NM\alpha(t).
\end{align}
Observing that $\mu^{i}(t)<1$ and
\begin{align}\label{cor_proof_eq2}
\|y_{i}(t)\|\leq\|y_{i}(t)-\mu^{i}(t-1)\tilde{y}(t)\|+\mu^{i}(t-1)\|\tilde{y}(t)\|,
\end{align}
we can easily deduce (\ref{cor_eq1}) from (\ref{lemma8_eq1}) and (\ref{cor_proof_eq1}).

Clearly, the first term $C_{3}\lambda_{2}^{t-1}$ and the third term $NM\alpha(t)$ on the right-hand side of (\ref{cor_eq1}) converge to zero when $t$ tends to infinity. Therefore, noting from part (a) of Lemma~\ref{lemma2} that
\begin{align*}
\lim\limits_{t\to\infty}C_{4}\sum\limits_{s=0}^{t-1}\lambda_{2}^{t-1-s}\alpha(s)=0,
\end{align*}
we have that $\lim\limits_{t\to\infty}\|y_{i}(t)\|=0$, which completes the proof.
\end{IEEEproof}

\begin{IEEEproof}[\textbf{Proof~of~Lemma~\ref{lemma9}}]
First, with $u_{i}(t)=\phi_{i}(t)-y_{i}(t)-\beta k_{i}(t)$, we rewrite (\ref{algorithm1_eq1b}) as
\begin{align*}
x_{i}(t+1)=\sum\limits_{j=1}^{N}A_{ij}(t)x_{j}(t)+u_{i}(t),
\end{align*}
which corresponds to the form formulated in \cite[Lemma 6]{Li2019TAC}. Thus, from the proof of \cite[Lemma 6]{Li2019TAC}, we can directly obtain
\begingroup
\allowdisplaybreaks
\begin{align}\label{lemma9_proof_eq1}
\|x_{i}(t)-\bar{x}(t)\|&\leq\sum\limits_{j=1}^{N}NC_{1}\lambda_{1}^{t-1}\|x_{j}(0)-\bar{x}(0)\|
                       +\sum\limits_{s=0}^{t-1}\sum\limits_{j=1}^{N}NC_{1}\lambda_{1}^{t-1-s}\|u_{j}(s)-\hat{u}(s)\|,
\end{align}
\endgroup
where $\hat{u}(t)=\sum\limits_{j=1}^{N}\pi^{j}(t+1)u_{j}(t)$. Then, for the term $\|u_{j}(s)-\hat{u}(s)\|$, we have
\begingroup
\allowdisplaybreaks
\begin{align}\label{lemma9_proof_eq2}
\|u_{j}(s)-\hat{u}(s)\|&\leq\|u_{j}(s)\|+\bigg\|\sum\limits_{j=1}^{N}\pi^{j}(s+1)u_{j}(s)\bigg\|\notag\\
                           &\leq\sum\limits_{j=1}^{N}\|u_{j}(s)\|+\sum\limits_{j=1}^{N}\pi^{j}(s+1)\|u_{j}(s)\|\notag\\
                           &\leq2\sum\limits_{j=1}^{N}\|u_{j}(s)\|,
\end{align}
\endgroup
where the second inequality is deduced from the convex property of the norm function and the last inequality can be obtained from the fact that $0<\pi^{i}(t)<1$ for all $i=1,2,\cdots,N$ and $t\geq0$. Moreover, considering that
\begin{align*}
\|u_{i}(s)\|\leq\|\phi_{i}(s)\|+\|y_{i}(s)\|+\beta\|k_{i}(s)\|,
\end{align*}
we can get from (\ref{lemma9_proof_eq1}) that
\begingroup
\allowdisplaybreaks
\begin{align}\label{lemma9_proof_eq3}
\|x_{i}(t)-\bar{x}(t)\|
&\leq C_{5}\lambda_{1}^{t-1}+2N^{2}C_{1}\sum\limits_{s=1}^{t-1}\lambda_{1}^{t-1-s}\bigg(w_{1}(s)
+\sum\limits_{j=1}^{N}\|y_{j}(s)\|+\sum\limits_{j=1}^{N}\beta\|k_{i}(s)\|\bigg).
\end{align}
\endgroup
Furthermore, observing that
\begingroup
\allowdisplaybreaks
\begin{align}\label{lemma9_proof_eq4}
g_{i}^{+}(v_{i}(t))
&\leq g_{i}^{+}(z_{i}(t))+g_{i}^{+}(v_{i}(t))-g_{i}^{+}(z_{i}(t))\notag\\
&\leq g_{i}^{+}(z_{i}(t))+(\partial g_{i}^{+}(v_{i}(t)))^{T}(v_{i}(t)-z_{i}(t))\notag\\
&\leq g_{i}^{+}(z_{i}(t))+\|\partial g_{i}^{+}(v_{i}(t))\|\|y_{i}(t)\|\notag\\
&\leq g_{i}^{+}(z_{i}(t))+M\|y_{i}(t)\|,
\end{align}
\endgroup
we arrive at
\begingroup
\allowdisplaybreaks
\begin{align}\label{lemma9_proof_eq5}
\|k_{i}(t)\|
&\leq \frac{1}{\|d_{i}(t)\|}g_{i}^{+}(v_{i}(t))\notag\\
&\leq\frac{1}{\underline{M}}(g_{i}^{+}(z_{i}(t))+g_{i}^{+}(v_{i}(t))-g_{i}^{+}(z_{i}(t)))\notag\\
&\leq\frac{1}{\underline{M}}g_{i}^{+}(z_{i}(t))+\frac{M}{\underline{M}}\|y_{i}(t)\|,
\end{align}
\endgroup
which implies that
\begingroup
\allowdisplaybreaks
\begin{align}\label{lemma9_proof_eq6}
&\veq\|x_{i}(t)-\bar{x}(t)\|\notag\\
&\leq C_{5}\lambda_{1}^{t-1}+2N^{2}C_{1}\sum\limits_{s=1}^{t-1}\lambda_{1}^{t-1-s}\bigg(w_{1}(s)
+\Big(1+\frac{\beta M}{\underline{M}}\Big)\sum\limits_{j=1}^{N}\|y_{j}(s)\|+\frac{\beta}{\underline{M}}w_{2}(s)\bigg).
\end{align}
\endgroup
Finally, combining (\ref{cor_eq1}) with (\ref{lemma9_proof_eq6}) produces
\begingroup
\allowdisplaybreaks
\begin{align}\label{lemma9_proof_eq7}
&\veq\|x_{i}(t)-\bar{x}(t)\|\notag\\
&\leq C_{5}\lambda_{1}^{t-1}+C_{6}\sum\limits_{s=0}^{t-1}\lambda_{1}^{t-1-s}w_{1}(s)+C_{7}\sum\limits_{s=0}^{t-1}\lambda_{1}^{t-1-s}w_{2}(s)\notag\\
&\veq+C_{8}\sum\limits_{s=1}^{t-1}\lambda_{1}^{t-1-s}\lambda_{2}^{s-1}+C_{9}\sum\limits_{s=1}^{t-1}\lambda_{1}^{t-1-s}\sum\limits_{r=0}^{s-1}\lambda_{2}^{s-1-r}\alpha(r)
+C_{10}\sum\limits_{s=1}^{t-1}\lambda_{1}^{t-1-s}\alpha(s),
\end{align}
\endgroup
thereby directly completing the proof.
\end{IEEEproof}

\begin{IEEEproof}[\textbf{Proof~of~Lemma~\ref{lemma10}}]
Noticing that
\begingroup
\allowdisplaybreaks
\begin{align}\label{lemma10_proof_eq1}
&\veq\alpha(t+1)\nabla f_{i}(x_{i}(t+1))-\alpha(t)\nabla f_{i}(x_{i}(t))\notag\\
&\leq\alpha(t+1)\nabla f_{i}(x_{i}(t+1))-\alpha(t)\nabla f_{i}(x_{i}(t+1))\notag\\
&\veq+\alpha(t)\nabla f_{i}(x_{i}(t+1))-\alpha(t)\nabla f_{i}(x_{i}(t)),
\end{align}
\endgroup
under Assumption~\ref{asm1}, we have
\begingroup
\allowdisplaybreaks
\begin{align}\label{lemma10_proof_eq2}
\|\epsilon_{i}(t)\|&\leq M(\alpha(t)-\alpha(t+1))+\alpha(t)\|\nabla f_{i}(x_{i}(t+1))-\nabla f_{i}(x_{i}(t))\|\notag\\
                   &\leq M(\alpha(t)-\alpha(t+1))+\alpha(t)L\|x_{i}(t+1)-x_{i}(t)\|.
\end{align}
\endgroup
According to part (a) of Lemma~\ref{lemma1} and the iteration rules (\ref{algorithm1_eq1a})--(\ref{algorithm1_eq1b}), we get
\begingroup
\allowdisplaybreaks
\begin{align}\label{lemma10_proof_eq3}
&\veq\|x_{i}(t+1)-x_{i}(t)\|\notag\\
&\leq\|v_{i}(t)-\beta k_{i}(t)-x_{i}(t)\|\notag\\
&\leq \|z_{i}(t)-\bar{x}(t)\|+\|x_{i}(t)-\bar{x}(t)\|
+\|y_{i}(t)\|+\beta\|k_{i}(t)\|\notag\\
&\leq \sum\limits_{j=1}^{N}A_{ij}(t)\|x_{j}(t)-\bar{x}(t)\|+\|x_{i}(t)-\bar{x}(t)\|+\|y_{i}(t)\|+\beta\|k_{i}(t)\|,
\end{align}
\endgroup
where the third inequality is obtained from the convex property of the norm function. Noticing (\ref{lemma9_proof_eq5}) and $0\leq A_{ij}(t)<1$ for all $i,j=1,2,\cdots,N$ and $t\geq0$, we can also obtain
\begingroup
\allowdisplaybreaks
\begin{align}\label{lemma10_proof_eq4}
\|\epsilon_{i}(t)\|
&\leq M(\alpha(t)-\alpha(t+1))+L\alpha(t)\sum\limits_{j=1}^{N}\|x_{j}(t)-\bar{x}(t)\|+L\alpha(t)\|x_{i}(t)-\bar{x}(t)\|\notag\\
&\veq+L\Big(1+\frac{\beta M}{\underline{M}}\Big)\alpha(t)\|y_{i}(t)\|+\frac{\beta L}{\underline{M}}\alpha(t)g_{i}^{+}(z_{i}(t)).
\end{align}
\endgroup
Then, based on Corollary~\ref{cor1} and Lemma~\ref{lemma9}, we finally reach
\begingroup
\allowdisplaybreaks
\begin{align}\label{lemma10_proof_eq5}
\|\epsilon_{i}(t)\|
&\leq M(\alpha(t)-\alpha(t+1))+C_{11}\gamma_{1}(t)+C_{12}\gamma_{2}(t)+L(1+N)\xi_{1}(t)+C_{13}\alpha(t)\lambda_{2}^{t-1}\notag\\
&\veq+C_{14}\alpha(t)\sum\limits_{s=0}^{t-1}\lambda_{2}^{t-1-s}\alpha(s)+C_{15}\alpha^{2}(t)+C_{16}\alpha(t)w_{2}(t).
\end{align}
\endgroup
Therefore, we can derive from (\ref{lemma8_proof_eq2}) that
\begingroup
\allowdisplaybreaks
\begin{align}\label{lemma10_proof_eq6}
&\veq\|y_{i}(t)-\mu^{i}(t-1)\tilde{y}(t)\|\notag\\
&\leq C_{3}\lambda_{2}^{t-1}+C^{\prime}_{4}\sum\limits_{s=0}^{t-1}\sum\limits_{j=1}^{N}\lambda_{2}^{t-1-s}\|\epsilon_{j}(s)\|\notag\\
&\leq C_{3}\lambda_{2}^{t-1}+C_{17}\sum\limits_{s=0}^{t-1}\lambda_{2}^{t-1-s}\xi_{2}(s)+C_{18}\eta_{1}(t)+C_{19}\eta_{2}(t)+C_{20}\eta_{3}(t).
\end{align}
\endgroup
So far, we have completed the proof.
\end{IEEEproof}

\begin{remark}
In (\ref{lemma10_proof_eq3}), for the purpose of dealing with the term $\|x_{i}(t+1)-x_{i}(t)\|$, the property $x_{i}(t)\in X_{i}$ is taken into account. It can be observed that the property $x_{i}(t)\in X_{i}$ holds for all $t\geq1$. Thus, although it does not really matter, we can select the initial vales $x_{i}(0)\in X_{i}$ to assure the preciseness of the whole analysis.
\end{remark}

\begin{IEEEproof}[\textbf{Proof~of~Lemma~\ref{lemma11}}]
From the iteration rules (\ref{algorithm1_eq1a})--(\ref{algorithm1_eq1b}), and Lemmas~\ref{lemma1} and~\ref{lemma6}, we can get
\begingroup
\allowdisplaybreaks
\begin{align}\label{lemma11_proof_eq1}
\|x_{i}(t+1)-v\|^{2}
&\leq \|z_{i}(t)-y_{i}(t)-\beta k_{i}(t)-v\|^{2}-\|\phi_{i}(t)\|^{2}\notag\\
&\leq \|z_{i}(t)-y_{i}(t)-v\|^{2}-\beta(2-\beta)\frac{(g_{i}^{+}(v_{i}(t)))^{2}}{\|d_{i}(t)\|^{2}}-\|\phi_{i}(t)\|^{2}\notag\\
&\leq \|z_{i}(t)-y_{i}(t)-v\|^{2}-\frac{\beta(2-\beta)}{M^{2}}(g_{i}^{+}(v_{i}(t)))^{2}-\|\phi_{i}(t)\|^{2}.
\end{align}
\endgroup
The first term on the right-hand side of (\ref{lemma11_proof_eq1}) equals
\begin{equation}\label{lemma11_proof_eq2}
\|z_{i}(t)-v\|^{2}+2y_{i}^{T}(t)(v-z_{i}(t))+\|y_{i}(t)\|^{2}.
\end{equation}
Letting $g(t)=\alpha(t)\sum\limits_{i=1}^{N}\nabla f_{i}(\bar{x}(t))$, for the second term in (\ref{lemma11_proof_eq2}), we can attain
\begingroup
\allowdisplaybreaks
\begin{align}\label{lemma11_proof_eq3}
&\veq 2y_{i}^{T}(t)(v-z_{i}(t))\notag\\
&=2\mu^{i}(t-1)g^{T}(t)(v-\bar{x}(t))+2\mu^{i}(t-1)g^{T}(t)(\bar{x}(t)-z_{i}(t))\notag\\
&\veq+2\mu^{i}(t-1)(\tilde{y}(t)-g(t))^{T}(v-z_{i}(t))+2(y_{i}(t)-\mu^{i}(t-1)\tilde{y}(t))^{T}(v-z_{i}(t)).
\end{align}
\endgroup
Under Assumption~\ref{asm2}, we have that $z_{i}(t)$ is uniformly bounded with respect to $t$, which implies the uniform boundedness of $v-z_{i}(t)$. So we also assume that $\|v-z_{i}(t)\|\leq M$, where $M$ is as given in Remark~\ref{remark3}. Then, under Assumption~\ref{asm1}, it follows from (\ref{lemma11_proof_eq3}) that
\begingroup
\allowdisplaybreaks
\begin{align}\label{lemma11_proof_eq5}
&\veq2y_{i}^{T}(t)(v-z_{i}(t))\notag\\
&\leq2\alpha(t)\mu^{i}(t-1)(f(v)-f(\bar{x}(t)))+2MN\sum\limits_{j=1}^{N}\alpha(t)\|x_{j}(t)-\bar{x}(t)\|\notag\\
&\veq+2LM\sum\limits_{j=1}^{N}\alpha(t)\|x_{j}(t)-\bar{x}(t)\|+2M\|y_{i}(t)-\mu^{i}(t-1)\tilde{y}(t)\|.
\end{align}
\endgroup
Moreover, noticing Proposition~\ref{prop2}, we have that
\begin{equation*}
\text{dist}(x,X_{0})\leq R\max\limits_{1\leq i\leq N}\text{dist}(x,X_{i})+R\max\limits_{1\leq i\leq N}g_{i}^{+}(x),
\end{equation*}
which further implies that
\begingroup
\allowdisplaybreaks
\begin{align}\label{lemma11_proof_eq6}
\|s(t)-\bar{x}(t)\|
&\leq R\sum\limits_{i=1}^{N}\text{dist}(\bar{x},X_{i})+R\sum\limits_{i=1}^{N}g_{i}^{+}(\bar{x})\notag\\
&\leq R\sum\limits_{i=1}^{N}\|x_{i}(t)-\bar{x}(t)\|+R\sum\limits_{i=1}^{N}g_{i}^{+}(z_{i}(t))
+R\sum\limits_{i=1}^{N}g_{i}^{+}(\bar{x})-R\sum\limits_{i=1}^{N}g_{i}^{+}(z_{i}(t))\notag\\
&\leq R(MN+1)\sum\limits_{i=1}^{N}\|x_{i}(t)-\bar{x}(t)\|+R\sum\limits_{i=1}^{N}g_{i}^{+}(z_{i}(t)).
\end{align}
\endgroup
Observing that
\begin{equation*}
f(v)-f(\bar{x}(t))\leq f(v)-f(s(t))+MN\|s(t)-\bar{x}(t)\|,
\end{equation*}
we can get from (\ref{lemma11_proof_eq5}) and (\ref{lemma11_proof_eq6}) that
\begingroup
\allowdisplaybreaks
\begin{align}\label{lemma11_proof_eq7}
2y_{i}^{T}(t)(v-z_{i}(t))
&\leq2\alpha(t)\mu^{i}(t-1)(f(v)-f(s(t)))+2D_{1}\sum\limits_{j=1}^{N}\alpha(t)\|x_{j}(t)-\bar{x}(t)\|\notag\\
&\veq+2MNR\alpha(t)\sum\limits_{i=1}^{N}g_{i}^{+}(z_{i}(t))+2M\|y_{i}(t)-\mu^{i}(t-1)\tilde{y}(t)\|.
\end{align}
\endgroup
Now, for the third term in (\ref{lemma11_proof_eq2}), we can obtain directly from (\ref{lemma8_eq1}) and Lemma~\ref{lemma1} that $\lim\limits_{t\to\infty}\big(y_{i}(t)-\mu^{i}(t-1)\tilde{y}(t)\big)=0$.
So we can also assume that $\|y_{i}(t)-\mu^{i}(t-1)\tilde{y}(t)\|\leq M$ always holds. Then, it follows from Lemma~\ref{lemma7} that
\begingroup
\allowdisplaybreaks
\begin{align}\label{lemma11_proof_eq8}
\|y_{i}(t)\|^{2}&\leq2\|y_{i}(t)-\mu^{i}(t-1)\tilde{y}(t)\|^{2}+2\|\mu^{i}(t-1)\tilde{y}(t)\|^{2}\notag\\
                &\leq2M\|y_{i}(t)-\mu^{i}(t-1)\tilde{y}(t)\|+2M^{2}N^{2}\alpha^{2}(t).
\end{align}
\endgroup
Noting from the convex property of the norm function that
\begin{equation}\label{lemma11_proof_eq9}
\|z_{i}(t)-v\|^{2}\leq\sum\limits_{j=1}^{N}A_{ij}(t)\|x_{i}(t)-v\|^{2},
\end{equation}
we can derive from (\ref{lemma11_proof_eq6}), (\ref{lemma11_proof_eq7}) and (\ref{lemma11_proof_eq8}) that
\begingroup
\allowdisplaybreaks
\begin{align}\label{lemma11_proof_eq10}
&\veq\|z_{i}(t)-y_{i}(t)-v\|^{2}\notag\\
&\leq\sum\limits_{j=1}^{N}A_{ij}(t)\|x_{i}(t)-v\|^{2}+2\alpha(t)\mu^{i}(t-1)(f(v)-f(s(t)))\notag\\
&\veq+2D_{1}\alpha(t)\sum\limits_{j=1}^{N}\|x_{i}(t)-\bar{x}(t)\|+4M^{2}N^{2}R^{2}\tau_{1}\alpha^{2}(t)+\frac{N}{4\tau_{1}}\sum\limits_{i=1}^{N}(g_{i}^{+}(z_{i}(t)))^{2}\notag\\
&\veq+2M^{2}N^{2}\alpha^{2}(t)+4M\|y_{i}(t)-\mu^{i}(t-1)\tilde{y}(t)\|,
\end{align}
\endgroup
with $\tau_{1}>0$ being a constant defined in the following. We next relate $(g_{i}^{+}(v_{i}(t)))^{2}$ to $(g_{i}^{+}(z_{i}(t)))^{2}$. Clearly,
\begin{align*}
g_{i}^{+}(v_{i}(t))=(g_{i}^{+}(v_{i}(t))-g_{i}^{+}(z_{i}(t)))+g_{i}^{+}(z_{i}(t)).
\end{align*}
Thus, we can obtain
\begingroup
\allowdisplaybreaks
\begin{align}\label{lemma11_proof_eq11}
(g_{i}^{+}(v_{i}(t)))^{2}
&\geq2\big(g_{i}^{+}(v_{i}(t))-g_{i}^{+}(z_{i}(t))\big)g_{i}^{+}(z_{i}(t))+(g_{i}^{+}(z_{i}(t)))^{2}\notag\\
&\geq2(\partial g_{i}^{+}(z_{i}(t)))^{T}(v_{i}(t)-z_{i}(t))g_{i}^{+}(z_{i}(t))+(g_{i}^{+}(z_{i}(t)))^{2}\notag\\
&\geq-2M\|y_{i}(t)\|g_{i}^{+}(z_{i}(t))+(g_{i}^{+}(z_{i}(t)))^{2},
\end{align}
\endgroup
where we have also assumed that $\|\partial g_{i}^{+}(z_{i}(t))\|\leq M$. Additionally, observing that
\begingroup
\allowdisplaybreaks
\begin{align}\label{lemma11_proof_eq12}
&\veq-2M\|y_{i}(t)\|g_{i}^{+}(z_{i}(t))\notag\\
&\geq-\tau_{2} M^{2}\|y_{i}(t)\|^{2}-\frac{1}{\tau_{2}}(g_{i}^{+}(z_{i}(t)))^{2}\notag\\
&\geq-2\tau_{2} M^{3}\|y_{i}(t)-\mu^{i}(t-1)\tilde{y}(t)\|-2\tau_{2} M^{4}N^{2}\alpha^{2}(t)-\frac{1}{\tau_{2}}(g_{i}^{+}(z_{i}(t)))^{2},
\end{align}
\endgroup
with $\tau_{2}>0$ being a constant defined in the following, we further have
\begingroup
\allowdisplaybreaks
\begin{align}\label{lemma11_proof_eq13}
&\veq(g_{i}^{+}(v_{i}(t)))^{2}\notag\\
&\geq-2\tau_{2} M^{3}\|y_{i}(t)-\mu_{i}(t-1)\tilde{y}(t)\|-2\tau_{2} M^{4}N^{2}\alpha^{2}(t)+\bigg(1-\frac{1}{\tau_{2}}\bigg)(g_{i}^{+}(z_{i}(t)))^{2}.
\end{align}
\endgroup
Selecting $\tau_{2}=4$, we can get from (\ref{lemma11_proof_eq1}), (\ref{lemma11_proof_eq10}) and (\ref{lemma11_proof_eq12}) that
\begingroup
\allowdisplaybreaks
\begin{align}\label{lemma11_proof_eq14}
&\veq\|x_{i}(t+1)-v\|^{2}\notag\\
&\leq\sum\limits_{j=1}^{N}A_{ij}(t)\|x_{i}(t)-v\|^{2}+2\theta_{2}\alpha(t)(f(v)-f(s(t)))
+2D_{1}\sum\limits_{j=1}^{N}\alpha(t)\|x_{i}(t)-\bar{x}(t)\|\notag\\
&\veq+4M^{2}N^{2}R^{2}\alpha^{2}(t)\tau_{1}+\frac{N}{4\tau_{1}}\sum\limits_{i=1}^{N}(g_{i}^{+}(z_{i}(t)))^{2}+8M^{2}N^{2}\beta(2-\beta)\alpha^{2}(t)+2M^{2}N^{2}\alpha^{2}(t)\notag\\
&\veq+(2M+8M\beta(2-\beta))\|y_{i}(t)-\mu^{i}(t-1)\tilde{y}(t)\|-\frac{3\beta(2-\beta)}{4M^{2}}(g_{i}^{+}(z_{i}(t)))^{2}-\|\phi_{i}(t)\|^{2},
\end{align}
\endgroup
where the first inequality is obtained from the facts that $\theta_{2}\leq\mu^{i}(t)<1$ and $f(v)-f(s(t))\leq0$ for all $i=1,2,\cdots N$ and $t\geq0$. Additionally, from part (c) of Lemma~\ref{lemma4}, we can attain
\begingroup
\allowdisplaybreaks
\begin{align*}
\veq\sum\limits_{i=1}^{N}\!\pi^{i}(t\!+\!1)\!\sum\limits_{j=1}^{N}\!A_{ij}(t)\|x_{i}(t)\!-\!v\|^{2}
\!=\!\sum\limits_{i=1}^{N}\!\pi^{i}(t)\|x_{i}(t)\!-\!v\|^{2}.
\end{align*}
\endgroup
Moreover, from parts (a) and (b) of Lemma~\ref{lemma4}, we can deduce
\begin{align*}
\theta_{1}\leq\pi^{i}(t)<1,\quad \sum\limits_{i=1}^{N}\pi^{i}(t)=1,
\end{align*}
for all $i=1,2,\cdots N$ and $t\geq0$.
Then, selecting $\tau_{1}=\frac{M^{2}N}{\theta_{1}\beta(2-\beta)}$, multiplying both sides of (\ref{lemma11_proof_eq14}) by $\pi^{i}(t+1)$, and taking summation from $i=1$ to $i=N$ yield
\begingroup
\allowdisplaybreaks
\begin{align}\label{lemma11_proof_eq16}
&\veq\sum\limits_{i=1}^{N}\pi^{i}(t+1)\|x_{i}(t+1)-v\|^{2}\notag\\
&\leq\sum\limits_{i=1}^{N}\pi^{i}(t)\|x_{i}(t)-v\|^{2}+2\theta_{2}\alpha(t)(f(v)-f(s(t)))
+2D_{1}\alpha(t)\sum\limits_{j=1}^{N}\|x_{j}(t)-\bar{x}(t)\|\notag\\
&\veq+D_{2}\alpha^{2}(t)+D_{3}\sum\limits_{i=1}^{N}\|y_{i}(t)-\mu_{i}(t-1)\tilde{y}(t)\|\notag\\
&\veq-\sum\limits_{i=1}^{N}\theta_{1}\|\phi_{i}(t)\|^{2}
-\sum\limits_{i=1}^{N}\frac{\theta_{1}\beta(2-\beta)}{2 M^{2}}(g_{i}^{+}(z_{i}(t)))^{2}.
\end{align}
\endgroup
Lastly, combining (\ref{lemma9_eq1}) and (\ref{lemma10_eq1}) with (\ref{lemma11_proof_eq16}) together can directly complete the proof.
\end{IEEEproof}

\begin{IEEEproof}[\textbf{Proof~of~Lemma~\ref{lemma12}}]
From (\ref{lemma11_eq1}) and Proposition~\ref{pro1}, we arrive at
\begingroup
\allowdisplaybreaks
\begin{align}\label{lemma12_proof_eq1}
&\veq\sum\limits_{i=1}^{N}\pi^{i}(t+1)\|x_{i}(t+1)-v\|^{2}+e(t+1)\notag\\
&\leq\sum\limits_{i=1}^{N}\pi^{i}(t)\|x_{i}(t)-v\|^{2}+2\theta_{2}\alpha(t)(f(v)-f(s(t))
+D_{4}\gamma_{1}(t)+D_{5}\gamma_{2}(t)+D_{6}\eta_{1}(t)\notag\\
&\veq-\theta_{1}\sum\limits_{i=1}^{N}\|\phi_{i}(t)\|^{2}+D_{7}\eta_{2}(t)+D_{8}\eta_{3}(t)
-\frac{\theta_{1}\beta(2-\beta)}{2 M^{2}}\sum\limits_{i=1}^{N}(g_{i}^{+}(z_{i}(t)))^{2}+\xi_{3}(t)\notag\\
&\veq+e(t)+a_{1}b_{1}(\lambda_{1}-1)\gamma_{1}(t)+a_{2}b_{2}(\lambda_{1}-1)\gamma_{2}(t)
+c_{1}(\lambda_{2}-1)\eta_{1}(t)\notag\\
&\veq+c_{2}(\lambda_{2}-1)\eta_{2}(t)+a_{3}b_{3}(\lambda_{2}-1)\eta_{3}(t)+a_{1}b_{1}\alpha(t)w_{1}(t)+a_{2}b_{2}\alpha(t)w_{2}(t)\notag\\
&\veq+a_{3}b_{3}\alpha(t)w_{2}(t)+c_{1}\gamma_{1}(t)+c_{2}\gamma_{2}(t).
\end{align}
\endgroup
Clearly, with the given $a_{i},\ i=1,2,3,$ and $c_{i},\ i=1,2$, we have
\begingroup
\allowdisplaybreaks
\begin{align*}
D_{4}\gamma_{1}(t)+a_{1}b_{1}(\lambda_{1}-1)\gamma_{1}(t)+c_{1}\gamma_{1}(t)&=0,\\
D_{5}\gamma_{2}(t)+a_{2}b_{2}(\lambda_{1}-1)\gamma_{2}(t)+c_{2}\gamma_{2}(t)&=0,\\
D_{6}\eta_{1}(t)+c_{1}(\lambda_{1}-1)\eta_{1}(t)&=0,\\
D_{7}\eta_{2}(t)+c_{2}(\lambda_{2}-1)\eta_{2}(t)&=0,\\
D_{8}\eta_{3}(t)+a_{3}b_{3}(\lambda_{2}-1)\eta_{3}(t)&=0.
\end{align*}
\endgroup
Furthermore, noting that
\begingroup
\allowdisplaybreaks
\begin{align*}
a_{1}b_{1}\alpha(t)w_{1}(t)
&\leq \frac{1}{2}a_{1}^{2}\alpha^{2}+\frac{1}{2}b_{1}^{2}w_{1}^{2}(t)\leq \frac{1}{2}a_{1}^{2}\alpha^{2}+\frac{1}{2}b_{1}^{2}N\sum\limits_{i=1}^{N}\|\phi_{i}(t)\|^{2},\\
a_{j}b_{j}\alpha(t)w_{2}(t)
&\leq \frac{1}{2}a_{j}^{2}\alpha^{2}+\frac{1}{2}b_{j}^{2}N\sum\limits_{i=1}^{N}(g_{i}^{+}(z_{i}(t)))^{2},\ j=2,3,
\end{align*}
\endgroup
we can get from (\ref{lemma12_proof_eq1}) that
\begingroup
\allowdisplaybreaks
\begin{align}\label{lemma12_proof_eq2}
&\veq\sum\limits_{i=1}^{N}\pi^{i}(t+1)\|x_{i}(t+1)-v\|^{2}+e(t+1)\notag\\
&\leq\sum\limits_{i=1}^{N}\pi^{i}(t)\|x_{i}(t)-v\|^{2}+e(t)+2\theta_{2}\alpha(t)(f(v)-f(s(t)))\notag\\
&\veq+\xi_{4}(t)-\frac{\theta_{1}}{2}\sum\limits_{i=1}^{N}\|\phi_{i}(t)\|^{2}-\frac{\theta_{1}\beta(2-\beta)}{4 M^{2}}\sum\limits_{i=1}^{N}(g_{i}^{+}(z_{i}(t)))^{2},
\end{align}
\endgroup
thereby completing the proof.
\end{IEEEproof}

\end{document}